\def\@cite#1#2{{\m@th\upshape\bfseries%
[{#1\if@tempswa{\m@th\upshape\mdseries, #2}\fi}]}}
\theoremstyle{plain}
\newtheorem{thm}{Theorem}[section]
\newtheorem{cor}[thm]{Corollary}
\newtheorem{prop}[thm]{Proposition}
\newtheorem{lemma}[thm]{Lemma}
\theoremstyle{definition}
\newtheorem{defn}[thm]{Definition}
\newtheorem{ass}[thm]{Assumption}
\theoremstyle{}
\newtheorem{rem}[thm]{Remark}
\numberwithin{equation}{subsection}
\newcommand{\rnc}{\renewcommand}
\newcommand{\C}{\mathbb C}
\newcommand{\Q}{\mathbb Q}
\newcommand{\R}{\mathbb R}
\newcommand{\ds}{\displaystyle}
\newcommand{\wt}{\widetilde}
\newcommand{\ra}{\longrightarrow}
\newcommand{\M}{\mathcal M}
\newcommand{\N}{\mathcal N}
\newcommand{\strata}{\mathcal H}
\newcommand{\hyp}{\mathcal{H}^{hyp}}
\DeclareMathOperator{\SL}{SL}
\DeclareMathOperator{\GL}{GL}
\newcommand{\ackn}{\noindent{\sc Acknowledgement }\hspace{5pt} }
\newcommand{\cC}{\mathcal C}
\rnc\cD{\mathcal D}
\newcommand{\cU}{\mathcal U}
\begin{document}

\title[Marked Points on Generic Surfaces]{$\GL_2 \R$-invariant Measures in Marked Strata: \\ Generic Marked Points, Earle-Kra for Strata, and Illumination}

\author[Apisa]{Paul Apisa}
\address{Department of Mathematics, University of Chicago, Chicago, IL, 60615} \email{paul.apisa@gmail.com}
  
\maketitle
    
\begin{abstract}
We classify $\GL(2, \R)$ invariant point markings over components of strata of Abelian differentials. Such point markings exist only when the component is hyperelliptic and arise from marking Weierstrass points or two points exchanged by the hyperelliptic involution. We show that these point markings can be used to determine the holomorphic sections of the universal curve restricted to orbifold covers of subvarieties of the moduli space of Riemann surfaces that contain a Teichm\"uller disk. The finite blocking problem is also solved for translation surfaces with dense $\GL(2, \R)$ orbit.
\end{abstract}

%
%

\section{Introduction}

In this paper, we consider the $\GL(2, \R)$ action on strata of translation surfaces with marked points and give applications to the study of holomorphic sections of the universal curve and the finite blocking problem. \\ \vspace{-2mm}

\noindent \textbf{Background.} Let $\mathcal{Q} \M_g$ be the moduli space of quadratic differentials on genus $g$ Riemann surfaces. The space admits a $\GL(2, \R)$ action arising from Teichm\"uller geodesic flow and scalar multiplication. The space $\mathcal{Q} \M_g$ also admits a $\GL(2, \R)$-invariant stratification given by specifying the number of zeros and poles of the quadratic differentials and their orders.
 
In the sequel we will use the notation $(X, q)$ for a point of $\mathcal{Q} \M_g$ where $X$ is a genus $g$ Riemann surface and $q$ is a quadratic differential. The point $(X, q)$ will be called generic if its $\GL(2, \R)$ orbit is dense in the stratum containing it. Similarly, a collection of points $P$ on $(X, q)$ will be said to be generic if the complex dimension of the $\GL(2, \R)$ orbit closure of $(X, q; P)$ is $|P|$ more than the dimension of the $\GL(2, \R)$ orbit closure of $(X, q)$. A point $p$ on $(X, q)$ that does not coincide with a zero or pole of $q$ and so that $\{ p \}$ is not generic will be called a periodic point. We will assume throughout that $P$ contains no zeros or poles of $q$. \\ \vspace{-2mm}
 
\noindent \textbf{Holomorphic Sections of the Universal Curve.} In Hubbard~\cite{Hub}, it was shown that holomorphic sections of the universal curve over $\mathrm{Teich}_{g,n}$ - the Teichm\"uller space of genus $g$ Riemann surfaces with $n$ punctures - exist only when $(g, n) = (2,0)$, in which case the sections mark fixed points of the hyperelliptic involution. Earle and Kra~\cite{EK} generalized this result by allowing the sections to mark punctured points. They showed that in this more general setting, the only new sections that could arise were in genus two and are given by taking a punctured point and marking its image under the hyperelliptic involution. 

Inspired by these questions, we show the following. Let $C$ be a subvariety of $\M_{g, n}$ and let $\Gamma$ be a torsionfree finite index subgroup of the mapping class group $\mathrm{Mod}_{g,n}$. Let $C(\Gamma)$ be the preimage of $C$ on $\mathrm{Teich}_{g,n} / \Gamma$.

\begin{thm}\label{T2}
If $C$ contains the Teichm\"uller disk generated by the quadratic differential $(X, q)$, then any holomorphic section of the universal curve over $C(\Gamma)$ marks either a pole, zero, or periodic point of $(X,q)$ on $X$.
\end{thm}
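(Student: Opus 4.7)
The plan is to restrict the section to the Teichm\"uller disk of $(X, q)$, recognize the lifted holomorphic disk as a Teichm\"uller disk in a marked stratum, and then invoke the paper's classification of $\GL(2, \R)$-invariant point markings to force the marked point to be periodic.

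First, let $\sigma$ be a holomorphic section of the universal curve over $C(\Gamma)$ and set $p := \sigma(X, q)$. If $p$ is a zero or pole of $q$, the conclusion is immediate, so assume otherwise. Lift $\sigma$ to a $\Gamma$-equivariant section over the preimage of $C$ in $\mathrm{Teich}_{g,n}$ and restrict to the Teichm\"uller disk $D$ generated by $(X, q)$. The restricted section's image $\sigma(D)$ is a holomorphic disk in the universal curve over $D$ which, by our assumption, avoids the zero/pole sections, and so may be viewed as a holomorphic disk in the marked stratum $\mathcal{H}^{\mathrm{mkd}}$ of quadratic differentials with one additional marked point.

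Second, I would argue that $\sigma(D)$ is itself a Teichm\"uller disk, specifically the one generated by $(X, q; p)$ in $\mathcal{H}^{\mathrm{mkd}}$. This is the key rigidity step: a holomorphic disk in $\mathcal{H}^{\mathrm{mkd}}$ projecting biholomorphically to a Teichm\"uller disk in the underlying stratum must itself be a Teichm\"uller disk, by Royden's theorem that the Teichm\"uller metric equals the Kobayashi metric together with the fact that the projection to the unmarked stratum is a holomorphic retraction. Combined with $\Gamma$-equivariance of the lifted $\sigma$, the assignment $(Y, r) \mapsto (Y, r; \sigma(Y, r))$ is $\SL(2, \R)$-equivariant along the $\SL(2, \R)$-orbit of $(X, q)$ and extends by continuity to a $\GL(2, \R)$-invariant point marking on the full orbit closure $\cM$ of $(X, q)$.

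Third, invoke the paper's main classification of $\GL(2, \R)$-invariant point markings (passing to the orientation double cover if $q$ is not a square, to reduce to the abelian differential setting). The theorem identifies all such markings as finite collections arising from Weierstrass points or pairs exchanged by the hyperelliptic involution. In particular, the orbit closure of $(X, q; p)$ in $\mathcal{H}^{\mathrm{mkd}}$ projects finite-to-one onto $\cM$ and thus has the same complex dimension as $\cM$; hence $\{p\}$ is not generic, and $p$ is a periodic point by definition.

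The main obstacle is the second step: establishing that a holomorphic lift of a Teichm\"uller disk is again a Teichm\"uller disk, and then promoting the resulting equivariant marking along a single disk to a $\GL(2, \R)$-invariant marking on the entire orbit closure. A secondary issue is that the main classification theorem is stated for components of strata, whereas the orbit closure of $(X, q)$ may be a proper affine invariant subvariety; one likely needs either to apply the classification only at a generic point of its ambient stratum component or to extend the classification to all affine invariant submanifolds.
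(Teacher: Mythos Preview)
Your Kobayashi rigidity argument (step two) is essentially the paper's Lemma~\ref{L:equivariant}: the section is a Kobayashi contraction, and since its composition with the forgetful map is an isometry, the section itself is an isometry on each Teichm\"uller disk; Teichm\"uller uniqueness (using that $\Gamma$ is torsionfree) then forces $\GL(2,\R)$-equivariance. One refinement: rather than proving equivariance on a single disk and ``extending by continuity,'' the paper observes that the lifted section $\widetilde{s}$ is already defined on all of $\M$ (since every surface in $\M$ lies over $C(\Gamma)$), and runs the Kobayashi argument on \emph{every} Teichm\"uller disk in $\M$; since $\M$ is foliated by such disks, equivariance on all of $\M$ follows directly.

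Your third step, however, is a genuine detour that does not close. You invoke Theorem~\ref{T1}, but as you yourself note, that theorem is only stated for translation surfaces generic in a full stratum component, whereas the orbit closure $\M$ of $(X,q)$ may be a proper affine invariant submanifold. More importantly, Theorem~\ref{T1} is not needed at all: the conclusion of Theorem~\ref{T2} is merely that $p$ is a periodic point, which by the paper's definition means exactly that the orbit closure of $(X,q;p)$ has the same dimension as $\M$. Once you have the equivariant section $\widetilde{s}:\M\to\overline{\M(0)}$, its image $\widetilde{s}(\M)$ is closed (by continuity of $s$) and $\GL(2,\R)$-invariant, hence an affine invariant submanifold by Eskin--Mirzakhani--Mohammadi. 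Since $\widetilde{s}$ is a section, $\dim\widetilde{s}(\M)=\dim\M$, so $\widetilde{s}(\M)\ne\M(0)$, and the point marked over $(X,q)$ is periodic by definition. This is exactly the paper's proof; it makes Theorem~\ref{T2} logically independent of Theorem~\ref{T1} and renders the passage to the holonomy double cover unnecessary.
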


\noindent This result holds for any section defined over a component of $C(\Gamma)$ as well.  By Eskin-Filip-Wright~\cite[Theorem 1.5]{EFW}, there are finitely many periodic points on $(X, q)$ if and only if its holonomy double cover is not a torus cover. For closures $C$ of Teichm\"uller disks generated by such quadratic differentials, there are only finitely many points that can be marked by holomorphic sections of the universal curve defined over $C(\Gamma)$ for any torsionfree finite index subgroup $\Gamma$ of the mapping class group. 

\begin{rem}\label{R:list}
Apart from $\M_{g,n}$ itself, other examples of algebro-geometrically interesting varieties that contain a Teichm\"uller disk are the theta-null divisor (see M\"uller~\cite{Muller-theta} and Grushevsky-Zakharov~\cite{Grushevsky-Zakharov}), the anti-ramification locus (see Farkas-Verra~\cite{Farkas-Verra-theta}), and the Weierstrass divisor (see Cukierman~\cite{Cukierman}). More examples are listed in Mullane~\cite{Mullane1} and the Kodaira dimensions of many such loci are computed in Gendron~\cite{Gendron-Incidence}. 
\end{rem}

\begin{rem}
McMullen classified all $\GL(2, \R)$ orbit closures in $\Omega \M_2$ in~\cite{Mc},~\cite{McM:spin},~\cite{Mc4}, and~\cite{Mc5}. A corollary of his work is that loci of surfaces in $\M_2$ whose Jacobians admit real multiplication by a fixed order in a real quadratic field contain Teichm\"uller disks. In Apisa~\cite{Apisa-golden}, the author will apply Theorem~\ref{T2} to show that sections of the universal curve defined over orbifold covers of these loci (as in Theorem~\ref{T2}) exist if and only if the locus is the locus of genus two Riemann surfaces whose Jacobians admit real multiplication by the maximal order in $\Q[\sqrt{5}]$.
\end{rem}

\noindent We will presently see that holomorphic sections of the universal curve over the loci in Remark~\ref{R:list} may be classified with the methods of this paper. \\ \vspace{-2mm}

\noindent \textbf{Finite Blocking on Translation Surfaces.} The finite blocking problem asks whether given two not necessarily distinct points $p$ and $q$ on a translation surface, there is a finite collection of points that intersects every straight line path from $p$ to $q$. We will continue to assume that neither $p$ nor $q$ coincide with a singularity of the flat metric.

\begin{thm}\label{T3}
A generic translation surface contains a pair of finitely blocked points if and only if it belongs to a hyperelliptic component, in which case the pair consists of a point and its image under the hyperelliptic involution.
\end{thm}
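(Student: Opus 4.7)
The strategy is to reduce the finite-blocking statement to the paper's classification of $\GL(2,\R)$-invariant point markings over components of strata.

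For the ``if'' direction, fix a generic $(X,\omega)$ in a hyperelliptic component with involution $\iota$, let $p$ be a non-singular point, and set $q := \iota(p)$ (allowing $p = q$, the Weierstrass case). I would show the $2g+2$ Weierstrass points form a finite blocking set for $(p,q)$. Given any straight trajectory $\gamma:[0,L]\to X$ from $p$ to $q$ of holonomy vector $v$, the path $\tilde\gamma(t) := \iota(\gamma(L-t))$ is also a trajectory from $p$ to $q$ with holonomy $v$, because $\iota^*\omega = -\omega$. For the generic direction, in which there is a unique length-$L$ trajectory from $p$ to $q$, this forces $\gamma = \tilde\gamma$, and setting $t = L/2$ gives $\iota(\gamma(L/2)) = \gamma(L/2)$, so the midpoint is a Weierstrass point. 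The exceptional directions contribute at most countably many additional trajectories whose missing midpoints can be enumerated and added to an enlarged but still finite blocking set.

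For the ``only if'' direction, suppose $(p,q)$ is finitely blocked on a generic $(X,\omega)$ by a set $B$. The central step is to upgrade this into a $\GL(2,\R)$-invariant point marking of the component of the stratum containing $(X,\omega)$, at which point the paper's classification applies. Finite blocking is a rigid condition: every holonomy vector $v$ of a straight trajectory from $p$ to $q$ decomposes as $v = v_1 + v_2$, where $v_1$ is the holonomy of a segment from $p$ to some $b \in B$ and $v_2$ of a segment from $b$ to $q$. These additive identities in period coordinates of the marked stratum are highly constraining, and a generic deformation of $(p,q)$, even allowing $B$ to deform, breaks them and produces unblocked trajectories. Consequently, the orbit closure of $(X,\omega; p,q)$ in the marked stratum projects onto the component of $(X,\omega)$ (by genericity of $(X,\omega)$) with finite fibers (by rigidity), yielding a $\GL(2,\R)$-invariant point marking. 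The paper's classification then forces the component to be hyperelliptic and $\{p,q\}$ to be either a Weierstrass point (with $p = q$) or a pair exchanged by $\iota$; in either case $q = \iota(p)$.

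The main obstacle is the rigidity argument in the necessity direction: showing that preservation of finite blocking under deformation forces $(p,q)$ to lie on a proper subvariety with finite fibers over the unmarked stratum. This requires simultaneously controlling deformations of $(p,q)$ and of $B$, and carefully exploiting the density of directions in which the linear flow exhibits unique ergodicity to extract enough additive identities in period coordinates. A cleaner route may proceed through a translation-cover reformulation of finite blocking in the spirit of Lelièvre-Monteil-Weiss: finite blocking should be equivalent to the existence of an involutive translation cover $\pi:(Y,\eta)\to(X,\omega)$ with a deck involution interchanging chosen lifts of $p$ and $q$. The $\GL(2,\R)$-invariance of such a cover propagates from the dense orbit to the entire component, where the marking classification takes over.
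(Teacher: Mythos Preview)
Your ``if'' direction is essentially correct but overcomplicated. Since a flat geodesic is uniquely determined by its initial point and direction, and since $\gamma$ and $\tilde\gamma$ share both, they coincide for \emph{every} trajectory, not just those in generic directions; the midpoint is always a Weierstrass point and no enlargement of the blocking set is needed.

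For the ``only if'' direction, the paper does not attempt the rigidity step from scratch. It invokes Apisa--Wright~\cite[Theorem~3.15]{Apisa-Wright}, which (combined with Theorem~\ref{T1}) yields directly that a finitely blocked pair on a generic surface forces the component to be hyperelliptic, with the pair consisting either of two Weierstrass points or of two points exchanged by $\iota$, and with the Weierstrass points serving as the blocking set. Your sketch of recovering this via additive holonomy identities or translation covers is in the spirit of that reference, but you rightly flag it as the main obstacle and leave it incomplete.

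There is, however, a genuine gap beyond this. You assert that the classification forces $\{p,q\}$ to be ``either a Weierstrass point (with $p=q$) or a pair exchanged by $\iota$''. This misreads Theorem~\ref{T1} (and the sharper Theorem~\ref{TS3}): the classification equally permits $p$ and $q$ to be two \emph{distinct} Weierstrass points, and nothing in the marking result excludes this. The paper disposes of this case with a separate geometric argument: a generic surface in a hyperelliptic component can be represented as a strictly convex $2n$-gon with opposite sides identified, the Weierstrass points being the center and the edge midpoints (and vertices when $n$ is even). Convexity then shows that a Weierstrass point is at most finitely blocked from itself, so two distinct Weierstrass points are \emph{not} a finitely blocked pair. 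Only after this step can one conclude $q=\iota(p)$. Your proposal omits this entirely.
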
 

\noindent We will see shortly that Theorem~\ref{T3} follows from the classification of non-generic points on generic translation surfaces (Theorem~\ref{T1}) and the methods of Apisa-Wright~\cite{Apisa-Wright}. For more on the finite blocking problem see Leli\`evre-Monteil-Weiss~\cite{LMW}. For applications to billiards see Apisa-Wright~\cite[Section 3]{Apisa-Wright}.  \\ \vspace{-2mm}

\noindent \textbf{Marked Points on Generic Translation Surfaces.} 

As in the case of Hubbard~\cite{Hub} and Earle-Kra~\cite{EK}, the following result shows that the only non-generic collection of marked points on a generic translation surface arise from hyperellipticity.

\begin{thm}\label{T1}
If $(X, \omega)$ is a generic translation surface of genus at least two and $P$ is a non-generic collection of points then $(X, \omega)$ belongs to a hyperelliptic component of a stratum of Abelian differentials and $P$ contains either a Weierstrass point or two points exchanged by the hyperelliptic involution.
\end{thm}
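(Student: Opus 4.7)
The plan is to deduce Theorem~\ref{T1} from the classification of $\GL(2,\R)$-invariant point markings over stratum components, which is the main result of the paper announced in the abstract, by passing to the $\GL(2,\R)$-orbit closure of the marked surface $(X,\omega;P)$ and applying the classification to the resulting affine invariant submanifold.

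\textbf{Key steps.} Let $\strata$ be the component of the stratum containing $(X,\omega)$ and let $\strata^{(n)}$ denote the stratum of $n$-marked translation surfaces with $n=|P|$, equipped with the forgetful map $\pi:\strata^{(n)}\to\strata$. First, I would form $\cM:=\overline{\GL(2,\R)\cdot(X,\omega;P)}$, which is an affine invariant submanifold of $\strata^{(n)}$ by Eskin-Mirzakhani-Mohammadi. Since $(X,\omega)$ is generic, $\pi(\cM)$ is a closed $\GL(2,\R)$-invariant subset of $\strata$ containing the dense orbit of $(X,\omega)$, hence $\pi(\cM)=\strata$. Next, observe that in local period coordinates $\dim_\C\strata^{(n)}=\dim_\C\strata+n$, while the non-generic hypothesis on $P$ forces $\dim_\C\cM<\dim_\C\strata+n$. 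Therefore $\cM$ is a proper $\GL(2,\R)$-invariant subvariety of $\strata^{(n)}$ surjecting onto $\strata$, which is exactly the data of a non-trivial $\GL(2,\R)$-invariant point marking over $\strata$ in the sense used in the paper.

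Now I would invoke the paper's classification to conclude that $\strata$ is a hyperelliptic component and that $\cM$ is cut out by the conditions of marking Weierstrass points and pairs exchanged by the hyperelliptic involution. Specializing this marking to the distinguished fiber over $(X,\omega)$, which contains the original collection $P$ by construction, immediately yields that $P$ contains either a Weierstrass point or a pair exchanged by the involution, which is the conclusion of Theorem~\ref{T1}. A minor technical point to address here: because $P$ may itself be a proper subset of the fiber, one should verify that the marking restricted to $(X,\omega)$ captures precisely the points of $P$; this is immediate from $(X,\omega;P)\in\cM$.

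\textbf{Main obstacle.} The substantive step is of course the invoked classification of $\GL(2,\R)$-invariant point markings, which is the genuinely new content of the paper and not a formal consequence of prior work. Establishing it will likely require a fine analysis of the tangent bundle to $\cM$ in the relative-period directions corresponding to the marked points, combined with cylinder deformation arguments and Mirzakhani-Wright boundary analysis in $\strata^{(n)}$ to degenerate and induct on complexity, together with the Eskin-Filip-Wright periodic point results cited in the introduction to handle the one-point case. The reduction presented above is by contrast essentially formal, relying only on the orbit closure theorem and the definition of (non-)genericity.
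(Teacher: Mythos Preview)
Your proposal is essentially circular. The ``classification of $\GL(2,\R)$-invariant point markings over components of strata'' that you propose to invoke is not a separate result from which Theorem~\ref{T1} follows as a corollary; it \emph{is} Theorem~\ref{T1}, restated in the paper in the sharpened form of Theorem~\ref{TS3}. Your reduction---form the orbit closure $\cM$ of $(X,\omega;P)$, observe that it projects onto $\strata$ and is proper in $\strata(0^n)$---is correct but tautological: it merely translates the hypothesis ``$P$ is non-generic'' into the equivalent statement that $\cM$ is a proper affine invariant submanifold surjecting onto $\strata$. Everything you defer to the ``main obstacle'' paragraph is the entire content of the theorem, and your proposal supplies no mechanism for it.

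The paper's actual argument is also rather different from your speculation about that obstacle. It does not invoke Eskin--Filip--Wright for the one-point case. Instead it constructs in Section~\ref{S:KZ-Surfaces} explicit horizontally and vertically periodic generic surfaces in every stratum component, and develops in Section~\ref{S:Cylinder} a quantitative lemma (Lemma~\ref{calc1}) that pins a periodic point to the center of a rectangle determined by two transverse cylinder equivalence classes and forces a symmetry condition on the ambient cylinder. With these tools it proves two structural results: Theorem~\ref{TS1} (the one-marked-point case, by direct cylinder-shearing on the explicit surfaces together with an induction on stratum dimension via Mirzakhani--Wright boundary degenerations) and Theorem~\ref{TS2} (when every one-point forgetful map surjects onto $\strata^{ord}(0^{n-1})$, one is forced to $n=2$ with the points exchanged by the hyperelliptic involution). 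These feed an induction on $n$ that yields Theorem~\ref{TS3} and hence Theorem~\ref{T1}. None of this structure appears in your proposal.
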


\begin{rem}
Each subvariety $C$ of $\M_{g,n}$ mentioned in Remark~\ref{R:list} is the projection of a non-hyperelliptic component $\strata$ of some stratum of Abelian differentials to $\M_{g,n}$. It follows from Theorem~\ref{T2} and Theorem~\ref{T1} that if $(X, \omega)$ belongs to $\strata$ then any holomorphic sections of the universal curve defined over $C$ can only mark zeros of $\omega$ over $X$.
\end{rem}

\noindent Finally, we remark that Theorem~\ref{T1} immediately implies Theorem~\ref{T3}. A key observation is that the collection of translation surfaces represented by strictly convex $2n$-gons with opposite sides identified is open and $\GL(2, \R)$-invariant in hyperelliptic components of strata. By Eskin-Mirzakhani-Mohammadi~\cite{EMM}, this implies that the complement of this collection of surfaces is a finite union of proper affine invariant submanifolds and hence only consists of non-generic translation surfaces. 

\begin{proof}[Proof of Theorem~\ref{T3} given Theorem~\ref{T1}:]
By Apisa-Wright~\cite[Theorem 3.15]{Apisa-Wright} and Theorem~\ref{T1}, finitely blocked pairs of points only occur when the generic translation surfaces belongs to a hyperelliptic component; and in this case, the pair of points consists of either two Weierstrass points or two points exchanged by the hyperelliptic involution and in both cases the finite blocking set is the collection of Weierstrass points. Since every generic translation surface in a hyperelliptic component may be represented by a strictly convex $2n$-gon with opposite sides identified, with Weierstrass points being the midpoints of the polygon and its edges (and the vertices when $n$ is even), we see from convexity that a Weierstrass point is at most finitely blocked from itself. Therefore, the collection of points finitely blocked from each other are exactly the ones containing a point and its image under the hyperelliptic involution.  
\end{proof} 

\noindent \textbf{Organization.} The proof of Theorem~\ref{T2} is independent of the rest of the paper and appears in Section~\ref{S:T2}. The outline of the proof of Theorem~\ref{T1} is given in Section~\ref{S:T1} and reduced to two more technical results that are established in the subsequent two sections. The two main tools used in the proof of Theorem~\ref{T1} are the construction of generic horizontally and vertically periodic translation surfaces in every component of every stratum of holomorphic one-forms in Section~\ref{S:KZ-Surfaces} and results in Section~\ref{S:Cylinder} that constrain the positions of periodic points using cylinders.

\bigskip

\ackn The author thanks Alex~Eskin for suggesting the problem and thanks Alex~Eskin, Alex~Wright, and Curt~McMullen for their insightful comments. He thanks Ronen~Mukamel for suggesting the parallel between the main theorems and the work of Hubbard and Earle-Kra. He thanks Matt Bainbridge for suggesting the connection to the problem of classifying holomorphic sections. This material is based upon work supported by the National Science Foundation Graduate Research Fellowship Program under Grant No. DGE-1144082. The author gratefully acknowledges their support.

%
%
%
%

\section{Background}

In this section, we will summarize the main tools used in the sequel. Let $\Omega \M_g$ be the moduli space of holomorphic one-forms (equivalently Abelian differentials or translation surfaces). As mentioned above this space admits a $\GL(2, \R)$ action and a $\GL(2, \R)$-invariant stratification. Each stratum of Abelian differentials admits a coordinate system - period coordinates - given by specifying the periods of the holomorphic one-forms in a basis of homology relative to the collection of zeros. The change of coordinates between two charts in period coordinates is a constant volume-preserving linear function, which endows strata of $\Omega \M_g$ with a linear structure and with a well-defined Lebesgue measure, see Zorich~\cite{Z} for details.

Lebesgue measure on strata of $\Omega \M_g$ induces a finite $\SL(2, \R)$-invariant measure on $\cU$ - the locus of unit area translation surfaces. An affine invariant submanifold is a closed subset of a stratum of $\Omega \M_g$ that is linear in period coordinates. Eskin and Mirzakhani~\cite{EM} showed that the collection of $\SL(2, \R)$-invariant ergodic measures on $\cU$ are precisely Lebesgue measure on affine invariant submanifolds restricted to $\cU$. Eskin-Mirzakhani-Mohammadi~\cite{EMM} showed that orbit closure of any holomorphic one-form in a stratum is exactly an affine invariant submanifold. 

\begin{defn}
Given an affine invariant submanifold $\M$, let $\mu_\M$ be Lebesgue measure on $\M \cap \cU$. Given a $\GL(2, \R)$-equivariant measurable map $f: \M \ra \N$ between two affine invariant submanifolds, define the pushforward of $\M$, denoted $f_* \M$, to be the affine invariant submanifold represented by the ergodic $\SL(2, \R)$-invariant measure given by $f_* \mu_\M$. Notice that $f(\M)$ coincides with $f_* \M$ up to sets of measure zero. 
\end{defn}

A fundamental tool in the sequel will be cylinder deformations. 

\begin{defn}
Suppose that $(X, \omega)$ is a translation surface in an affine invariant submanifold $\M$. If $C_1$ and $C_2$ are two cylinders in $\M$ then $C_1$ and $C_2$ will be said to be $\M$-equivalent if $C_1$ and $C_2$ are parallel at all surfaces in an open neighborhood of $(X, \omega)$ in $\M$. A maximal collection of equivalent cylinders on $(X, \omega)$ will be called an $\M$-equivalence class. Given a collection $\cC$ of cylinders, the standard shear, $\sigma_\cC$, and the standard dilation, $a_\cC$, are the following cohomology classes
\[ \sigma_\cC := \sum_{c \in \cC} h_c \gamma_c^* \qquad a_\cC := i \sum_{c \in \cC} h_c \gamma_c^* \]
where $h_c$ is the height of cylinder $c$ and $\gamma_c$ is the core curve of the cylinder $c$ oriented from left to right.
\end{defn}

\begin{thm}[Wright~\cite{Wcyl}, Corollary 3.4]\label{T:W1}
Suppose that $(X, \omega)$ is a horizontally periodic translation surface whose $\GL(2, \R)$ orbit closure is $\M$. Let $(C_1, \hdots, C_n)$ be an enumeration of the horizontal cylinders and suppose that cylinder $C_i$ has modulus $m_i$ and core curve (oriented from left to right) $\gamma_i$ of length $c_i$ for $i = 1, \hdots, n$. Let $W \subseteq \Q^n$ be the subset of rational homogeneous linear relations that the moduli $(m_i)_{i=1}^n$ satisfy, i.e. $w \in W$ if and only if $w \cdot m = 0$. If $(v_i)_{i=1}^n \in \C^n$ belongs to $W^\perp$, then 
\[ \sum_{i=1}^n c_i v_i \gamma_i^* \in T_{(X, \omega)} \M \]
\end{thm}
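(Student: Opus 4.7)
The plan is to deduce this from Wright's cylinder deformation theorem (also in~\cite{Wcyl}), which under the same hypotheses asserts that for every $\M$-equivalence class $\cC$ of horizontal cylinders, both the standard shear $\sigma_{\cC} = \sum_{c \in \cC} h_c \gamma_c^*$ and the standard dilation $a_{\cC} = i \sigma_{\cC}$ lie in $T_{(X,\omega)}\M$. Since $T_{(X,\omega)}\M$ is a complex subspace in period coordinates, arbitrary complex linear combinations are allowed: for any choice of complex scalars $\lambda_j$ indexed by the $\M$-equivalence classes $\cC_j$ of horizontal cylinders,
\[
\sum_j \lambda_j \, \sigma_{\cC_j} \;=\; \sum_{i=1}^n \lambda_{j(i)} h_i \gamma_i^* \;=\; \sum_{i=1}^n c_i v_i \gamma_i^* \;\in\; T_{(X,\omega)}\M,
\]
where $j(i)$ denotes the class containing $C_i$ and $v_i := \lambda_{j(i)} m_i$, using $h_i = m_i c_i$. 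Let $U \subseteq \C^n$ be the complex subspace of all such tuples $(v_i)$; the display shows $U$ is already contained in the target subspace of the corollary, so it suffices to prove $W^\perp \subseteq U$.

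Equivalently, I would show $U^\perp \subseteq W \otimes_\Q \C$, where $U^\perp$ denotes the annihilator in $\C^n$. By construction of $U$, the subspace $U^\perp$ is cut out by the equations $\sum_{i \in \cC_j} w_i m_i = 0$, one per equivalence class. The key input at this point is the fact that within any single $\M$-equivalence class $\cC_j$, the moduli $(m_i)_{i \in \cC_j}$ are pairwise rational multiples of one another. Granted this, each defining equation of $U^\perp$ rescales to have coefficients in $\Q$, producing a basis of $U^\perp$ inside $\Q^n$; each such rational basis vector is by definition a relation satisfied by the moduli and therefore lies in $W$. Hence $U^\perp \subseteq W \otimes_\Q \C$, so $W^\perp \subseteq U$, which is what was needed.

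The main obstacle is the within-class rationality of moduli: the assertion that $m_i / m_k \in \Q$ whenever $C_i, C_k$ lie in the same $\M$-equivalence class. This is the substantive ingredient and is not a purely linear-algebraic consequence of cylinder deformations; it relies on the rationality of the field of definition of $T\M$ in period coordinates, established by Wright. Once that is in hand, the remainder of the argument is essentially bookkeeping, repackaging the per-class cylinder deformations into the uniform formula over all of $W^\perp$.
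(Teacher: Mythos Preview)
Your reduction is clean, but the step you flag as the ``main obstacle'' is in fact the entire content of the theorem, and your proposed justification for it is incorrect.

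You assert that within a single $\M$-equivalence class the moduli are pairwise rational multiples of one another, and you attribute this to ``the rationality of the field of definition of $T\M$.'' Two problems. First, Wright's field of definition $k(\M)$ is a number field, not $\Q$ in general, so this cannot be the source of a rationality statement. Second, and more seriously, the rationality of moduli within a class is \emph{false} without the genericity hypothesis on $(X,\omega)$, which your argument never invokes. For instance, in a full stratum (where $k(\M)=\Q$) two horizontal cylinders with homologous core curves are automatically $\M$-equivalent, yet their moduli can be chosen in any real ratio. Since Theorem~\ref{T:W2} as stated does not require genericity, the only place genericity can enter your argument is through the rationality claim --- and it must enter, because the conclusion is false otherwise.

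Wright's actual proof of Corollary~3.4 does not pass through equivalence classes or the cylinder deformation theorem at all. It uses that a generic point is recurrent under the horocycle flow $u_t$: recurrence in moduli space means that for a sequence $t_n\to\infty$ the twist coordinates $(t_n m_1,\dots,t_n m_n)$ are close to integer vectors, and the renormalized displacement vectors $\sum_i c_i(t_n m_i - n_i)\gamma_i^*$ lie in $T_{(X,\omega)}\M$. By Kronecker's theorem the closure of $\{(t m_i)\bmod 1\}$ in $(\R/\Z)^n$ is the subtorus cut out exactly by $W$, so the limiting directions of these displacement vectors fill out $W^\perp$. Thus the commensurability of moduli within an equivalence class is a \emph{consequence} of this argument (combined with the cylinder deformation theorem), not an independent input; using it as a hypothesis makes your proof circular.

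A minor expository point: when you write ``each defining equation of $U^\perp$ rescales to have coefficients in $\Q$, producing a basis of $U^\perp$ inside $\Q^n$,'' you have conflated the defining equations of $U^\perp$ (which span $U$) with a basis of $U^\perp$. The intended statement --- that if $U$ is defined over $\Q$ then so is $U^\perp$ --- is of course true, but the sentence as written is garbled.
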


\begin{thm}[Wright~\cite{Wcyl}, Lemma 4.11]\label{T:W2}
If $\cC$ is an equivalence class of horizontal cylinders on a translation surface $(X, \omega)$ contained in affine invariant submanifold $\M$ then the standard shear and the standard dilation are both contained in $T_{(X, \omega)} \M$.
\end{thm}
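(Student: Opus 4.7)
The plan is to derive Theorem~\ref{T:W2} directly from Theorem~\ref{T:W1}. The strategy is to exhibit, for each $\M$-equivalence class $\cC$ of horizontal cylinders, a vector $(v_i)_{i=1}^n \in W^\perp$ such that $\sum_i c_i v_i \gamma_i^* = \sigma_\cC$. Matching coefficients forces $v_i = h_i/c_i = m_i$ for $C_i \in \cC$ and $v_i = 0$ otherwise, so the entire content is to verify that this particular $v$ lies in $W^\perp$. Once this is established, Theorem~\ref{T:W1} produces $\sigma_\cC \in T_{(X, \omega)}\M$, and the standard dilation $a_\cC = i\sigma_\cC$ follows from the same application with $v$ replaced by $iv$, since $W \subseteq \Q^n$ implies that its $\C$-orthogonal complement is closed under multiplication by $i$. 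As a sanity check, summing the candidate $v$'s over all equivalence classes recovers the full vector $m \in W^\perp$, for which $\sum_i c_i m_i \gamma_i^* = \sum_i h_i \gamma_i^*$ is exactly the cohomology class of the horocycle generator of the $\SL(2,\R)$-action, which is automatically in $T_{(X, \omega)}\M$.

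The crux is therefore the claim that the space $W$ of rational moduli relations decomposes as a direct sum along the $\M$-equivalence classes, i.e., for every $w \in W$ and every class $\cC$, the truncated vector $w \cdot \mathbf{1}_\cC$ again belongs to $W$. I would prove this directly from the definition of an $\M$-equivalence class: if $C_i$ and $C_j$ lie in different classes, there is some $\eta \in T_{(X,\omega)}\M$ making them non-parallel to first order, which translates into the inequality $\mathrm{Im}(\eta(\gamma_i))/c_i \neq \mathrm{Im}(\eta(\gamma_j))/c_j$. Paired with a variant of Wright's Cylinder Proportion Theorem (moduli of cylinders in a single equivalence class have locally constant rational ratios on $\M$ near $(X,\omega)$), the existence of such a separating deformation $\eta$ precludes any rational relation in $W$ from coupling moduli across distinct classes. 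This yields the decomposition $W = \bigoplus_\cC (W \cap \Q^\cC)$, after which the desired membership $v \in W^\perp$ is immediate: for any $w \in W$, the restriction $w \cdot \mathbf{1}_\cC$ is in $W$, so $w \cdot v = \sum_{C_i \in \cC} w_i m_i = (w \cdot \mathbf{1}_\cC) \cdot m = 0$.

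The main obstacle is precisely the bridge between the geometric definition of equivalence (parallelism of core curves under all nearby deformations in $\M$) and its arithmetic consequence (rational moduli relations decompose along classes). Formalizing this bridge is what effects the reduction to Theorem~\ref{T:W1}; the delicate part is showing that a hypothetical cross-class rational relation in $W$ would contradict the freedom guaranteed by a separating deformation $\eta$, for which one must carefully relate the first-order change in the modulus $m_i$ under $\eta$ to the first-order change in the direction of $\gamma_i$. Once this compatibility is in hand, the remainder of the proof is routine linear algebra together with a direct invocation of Theorem~\ref{T:W1}.
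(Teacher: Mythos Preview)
The paper does not prove this statement; it is quoted as Lemma~4.11 of Wright's cylinder deformation paper and used as a black box. So there is no ``paper's proof'' to compare against, and your task is really to reprove Wright's result.

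Your proposed reduction to Theorem~\ref{T:W1} has a genuine gap. The crux of your argument is the claim that the space $W$ of rational moduli relations at $(X,\omega)$ decomposes as a direct sum along $\M$-equivalence classes, i.e.\ that $w\in W$ implies $w\cdot\mathbf 1_{\cC}\in W$ for every class $\cC$. This is false. Take $\M$ to be a full connected component of a stratum: then any two horizontal cylinders with non-homologous core curves lie in distinct $\M$-equivalence classes, since one can perturb the period of one core curve off the real axis while fixing the other. Nothing prevents such a surface from having, say, $m_1=m_2$ for two inequivalent cylinders $C_1,C_2$; then $w=(1,-1,0,\dots,0)\in W$, but $w\cdot\mathbf 1_{\{C_1\}}=(1,0,\dots,0)$ would force $m_1=0$. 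In this situation $\dim_{\C}W^\perp$ is strictly smaller than the number of equivalence classes, so Theorem~\ref{T:W1} simply cannot produce enough tangent vectors to account for all the $\sigma_{\cC}$'s. Thus Theorem~\ref{T:W2} is not a formal consequence of Theorem~\ref{T:W1} as stated.

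The bridge you sketch does not close this gap. A separating deformation $\eta$ witnesses that the \emph{directions} of $\gamma_i$ and $\gamma_j$ can be made to diverge to first order; it says nothing about the \emph{moduli} $m_i,m_j$ at the original point, which is what membership in $W$ records. Your invoked ``variant of the Cylinder Proportion Theorem'' asserting locally constant rational ratios of \emph{moduli} within an equivalence class is not Wright's theorem (his Theorem~1.9 concerns ratios of \emph{circumferences}) and is not true: the standard dilation itself alters heights and hence moduli within a class. In Wright's paper the logic in fact runs the other way: the cylinder deformation theorem (your Theorem~\ref{T:W2}) is the hard input, proved via a delicate approximation by horizontally periodic surfaces with more and more cylinders, and statements like Theorem~\ref{T:W1} are downstream of it.
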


Finally, the fundamental object of study in the sequel will be the following.

\begin{defn}
Given an affine invariant submanifold $\M$, let $\M(0^n)$ be the collection of quadratic differentials $(X, q) \in \M$ together with $n$ distinct marked points that do not coincide with zeros or poles of $q$ (where $n$ is a positive integer). Let $\overline{\M(0^n)}$ be the partial compactification of $\M(0^n)$ where marked points are allowed to coincide with each other and with zeros and poles of $q$. Both $\M(0^n)$ and $\overline{\M(0^n)}$ admit a $\GL(2, \R)$-action and all results stated in this section continue apply to both of these spaces.
\end{defn}

Notice that $\overline{\M(0^n)}$ is contained in the Mirzakhani-Wright partial compactification of $\M(0^n)$ (see~\cite{MirWri})

\begin{thm}[Mirzakhani-Wright~\cite{MirWri}, Corollary 1.2]
Suppose that $\M$ is an affine invariant submanifold and $(X_n, \omega_n)$ is a sequence of points in $\M$ that converges to a, possibly disconnected, translation surface $(Y, \eta)$ in the boundary of $\M$. The orbit closure of any component of $(Y, \eta)$ has smaller dimension than $\dim \M$. 
\end{thm}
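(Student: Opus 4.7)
The plan is to construct, for each component $(Y_i, \eta_i)$ of $(Y, \eta)$ with orbit closure $\mathcal{N}_i$, a linear ``collapse'' map $p_i$ carrying $T_{(X_n, \omega_n)}\M$ into $T_{(Y_i, \eta_i)}\mathcal{N}_i$, and then to exhibit a nonzero element of $T_{(X_n, \omega_n)}\M$ lying in its kernel. This forces $\dim \mathcal{N}_i \leq \dim \M - 1$.

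First I would invoke the Mirzakhani-Wright description of the partial compactification: for $n$ large, $(X_n, \omega_n) \to (Y, \eta)$ is realized by collapsing a nonempty collection $\mathcal{S}$ of parallel saddle connections, with each component $(Y_i, \eta_i)$ arising as the smoothing of a subsurface $Z_i \subseteq X_n$ whose relative boundary lies in $\mathcal{S}$. Letting $\Sigma_n$ and $\Sigma_i$ denote the relevant zero sets, restriction to $Z_i$ followed by identification in the smoothing defines a $\GL(2,\R)$-equivariant linear map
\[ p_i : H^1(X_n, \Sigma_n; \C) \longrightarrow H^1(Y_i, \Sigma_i; \C) \]
on relative cohomology. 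Since $\M$ is linear in period coordinates and nearby surfaces in $\M$ degenerate to surfaces limiting into $\mathcal{N}_i$, and since $\mathcal{N}_i$ is by Eskin-Mirzakhani-Mohammadi also linear in period coordinates, we obtain the inclusion $p_i(T_{(X_n, \omega_n)}\M) \subseteq T_{(Y_i, \eta_i)}\mathcal{N}_i$, whence
\[ \dim \mathcal{N}_i \;\geq\; \dim p_i(T\M) \;=\; \dim \M - \dim\bigl(\ker p_i \cap T\M\bigr). \]

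Next I would produce a nonzero element of $\ker p_i \cap T\M$. After rotating so that some pinched cycle is horizontal, there is a horizontal cylinder $C$ on $(X_n, \omega_n)$ whose core curve either lies in $\mathcal{S}$ or lies outside $Z_i$. Theorem~\ref{T:W2} places the standard shear of the $\M$-equivalence class of $C$ in $T_{(X_n, \omega_n)}\M$; by construction this shear is a combination of Poincar\'e duals of curves that are either collapsed in $Y$ or supported away from $Y_i$, so $p_i$ annihilates it, and positivity of cylinder heights ensures it is nonzero. This gives the desired strict inequality.

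The main obstacle is justifying this last step in the full generality of the Mirzakhani-Wright compactification, where degenerations may involve whole positive-area subsurfaces shrinking, multiple non-parallel pinched classes, or collapses that are not visibly cylinder-like in any single direction. Establishing that one can always rotate and perturb within $\M$ to produce an isolated cylinder whose shear lies in $\ker p_i$ is essentially the technical content of~\cite[Corollary 1.2]{MirWri}, and relies on a careful analysis of plumbing coordinates and of how $\M$ meets a neighborhood of $(Y, \eta)$.
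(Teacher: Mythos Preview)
The paper does not prove this statement; it is quoted directly from Mirzakhani--Wright and used as a black box. So there is no in-paper argument to compare against. Evaluating your sketch on its own merits, however, there is a genuine logical gap.

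Your displayed chain of inequalities goes the wrong way. From the inclusion $p_i(T\M) \subseteq T\mathcal{N}_i$ you correctly deduce
\[
\dim \mathcal{N}_i \;\geq\; \dim p_i(T\M) \;=\; \dim \M - \dim(\ker p_i \cap T\M),
\]
but this is a \emph{lower} bound on $\dim \mathcal{N}_i$, and no lower bound will ever yield $\dim \mathcal{N}_i < \dim \M$. Exhibiting a nonzero kernel element only makes the right-hand side smaller; it does nothing to cap $\dim \mathcal{N}_i$ from above. What you actually need is the reverse containment $T\mathcal{N}_i \subseteq p_i(T\M)$, so that $\dim \mathcal{N}_i \leq \dim p_i(T\M) < \dim \M$.

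Moreover, the inclusion you do claim is not true as stated. Deforming $(X_n,\omega_n)$ inside $\M$ and passing to the limit lands you in the boundary locus $\partial\M$, which contains $\mathcal{N}_i$ but may be strictly larger: the particular limit $(Y_i,\eta_i)$ need not be generic in $\partial\M$, so there is no reason those induced deformations remain tangent to its orbit closure. The substantive theorem of Mirzakhani--Wright is precisely that $\partial\M$ is again an affine invariant submanifold with tangent space \emph{equal} to $p(T\M)$; once one knows this, $\mathcal{N}_i \subseteq \partial\M$ gives the needed upper bound, and the kernel is easily seen to be nonzero because the degeneration direction itself (shrinking the vanishing cycles while staying in $\M$) lies in it. You have therefore also mislocated the difficulty: the hard part is not manufacturing a kernel element via cylinder shears, but establishing the equality $T(\partial\M) = p(T\M)$.
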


\begin{lemma}\label{L:fiber}
Let $\M$ be an affine invariant submanifold and let $\N$ be an affine invariant submanifold in $\M(0)$. Let $\pi: \M(0) \ra \M$ be the map that forgets the marked point. If $\pi_* \N = \M$ then $\pi\left( \N \right)$ is an open dense subset of $\M$ and the fiber over a generic point in $\M$ is nonempty. Moreover, $\pi$ is open. 
\end{lemma}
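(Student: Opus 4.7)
The plan is to decode the hypothesis $\pi_*\N = \M$ via the pushforward definition recalled earlier, and then to use the linearity of affine invariant submanifolds in period coordinates to promote the resulting measure-theoretic equality into an open-map statement. By that definition, $\pi_*\N=\M$ means that $\pi_*\mu_\N$ represents $\mu_\M$, and the remark immediately following the definition states that $\pi(\N)$ and $\pi_*\N$ agree up to a $\mu_\M$-null set. Hence $\pi(\N)$ has full $\mu_\M$-measure in $\M$; since $\mu_\M$ is the Lebesgue class measure on $\M\cap\cU$ and is positive on every nonempty open subset of $\M$, this already gives density of $\pi(\N)$ in $\M$.

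Next I would pass to period coordinates. Locally near $(X,\omega;p)\in\N$, the tangent space $T_{(X,\omega;p)}\M(0)$ sits inside $H^1(X,\Sigma\cup\{p\};\C)$, where $\Sigma$ is the zero set of $\omega$, while $T_{(X,\omega)}\M$ sits inside $H^1(X,\Sigma;\C)$, and $d\pi$ is the natural forgetful linear map between these relative cohomology groups. Because $\N$ is an affine invariant submanifold, it is locally cut out by real-linear equations in these coordinates, so $d\pi|_{T\N}$ is a linear map between fixed linear subspaces and therefore has locally constant rank. The key claim is that $d\pi : T_{(X,\omega;p)}\N \to T_{(X,\omega)}\M$ is surjective at every point of $\N$: if its image were a proper linear subspace then, by constancy of rank in a chart, $\pi(\N)$ would locally lie in a proper affine subspace of $\M$ and hence have $\mu_\M$-measure zero, contradicting the full-measure statement above. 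Thus $\pi|_\N$ is locally a linear surjection, and so an open map; this proves $\pi(\N)$ is open in $\M$, hence open and dense. A generic point of $\M$ now lies in the full-measure set $\pi(\N)$, and therefore its $\pi|_\N$-fiber is nonempty.

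The final claim that $\pi:\M(0)\to\M$ itself is open follows by applying the same linear projection argument to $\N=\M(0)$, for which surjectivity of $d\pi$ is automatic, since the forgetful map $H^1(X,\Sigma\cup\{p\};\C)\to H^1(X,\Sigma;\C)$ is surjective with kernel generated by the cohomology class dual to a path from $\Sigma$ to $p$. The main obstacle I anticipate is making the constancy-of-rank argument rigorous, i.e.\ verifying that in the period chart in which $\N$ is affine-linear, the forgetful map $\pi$ is simultaneously linear with locally constant rank; however this is built into the description of period coordinates on a stratum with a marked zero of order zero, so no additional input beyond the definitions in the \textbf{Background} section is needed.
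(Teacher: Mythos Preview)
Your argument is correct and takes a genuinely different route from the paper's. The paper argues as follows: since $\pi:\overline{\M(0)}\to\M$ has compact fibers it is proper, so $\pi(\overline{\N})$ is closed and of full measure, hence all of $\M$; then the Mirzakhani--Wright boundary theorem is invoked to show that the boundary $\overline{\N}\setminus\N$ has strictly smaller dimension, so its image $\cC$ (via Sard) is a proper closed subset, and $\pi(\N)\supset\M\setminus\cC$; finally openness of $\pi|_\N$ is obtained by quoting that a finite holomorphic map between equidimensional varieties is open. Your approach instead exploits directly that in a single period chart both $\N$ and $\M$ are linear and $\pi$ is a linear projection, so $d\pi|_{T\N}$ has locally constant rank, and then the full-measure condition forces that rank to equal $\dim\M$. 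This is more elementary: it avoids both Mirzakhani--Wright and the general fact about finite maps, and it yields openness of $\pi|_\N$ and openness of $\pi(\N)$ in one stroke. The paper's route, by contrast, gives extra structural information (the complement of $\pi(\N)$ lies in the image of a lower-dimensional boundary), which is not needed here but is in the spirit of how the boundary theorem is used elsewhere in the paper. One small remark: the final sentence of the lemma is meant to assert that $\pi|_\N$ is open (this is how it is used in the next lemma), not merely that $\pi:\M(0)\to\M$ is open; you have already established this in your second paragraph, so your separate treatment of the case $\N=\M(0)$ is superfluous but harmless.
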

\begin{proof}
Since all fibers of the forgetful map $\pi: \overline{\M(0)} \ra \M$ are compact, $\pi$ is proper and its image is closed. The image of $\pi$ is full measure in $\pi_* \N$, which is $\M$ by assumption. If $\N = \M(0)$ then the result is immediate, so suppose instead that $\N$ and $\M$ have the same dimension. By Mirzakhani-Wright~\cite[Corollary 1.2]{MirWri}, the components of the boundary of $\overline{\N} \cap \overline{\M(0)}$ have strictly smaller dimension than $\M$ and hence their pushfoward $\cC$ cannot be $\M$ by Sard's theorem. Therefore, $\pi\left( \N \right)$ contains the complement of $\cC$, which is an open dense set in $\M$. Since $\pi: \N \ra \M$ is a finite holomorphic map between equidimensional varieties, it is open.

\end{proof}

\begin{defn}
If $\M$ is an affine invariant submanifold then let $\M^{ord}(0^n)$ be the finite cover of $\M(0^n)$ where the marked points are labelled. Let $\pi_k: \M(0^n) \ra \M(0^{n-1})$ be the map that forgets the $k$th-marked point where $k \in \{1, \hdots, n\}$. 
\end{defn}

\begin{lemma}\label{L:neighbor}
Let $\N$ be an affine invariant submanifold in $\M(0^n)$ that pushes forward to $\M$ under the map $\pi$ that forgets all marked points. If $(X, \omega; P)$ is generic in $\M(0^n)$, then there is an open set $U$ of $(X, \omega; P)$ in $\N$ so that $\pi(U)$ is an open set around $(X, \omega)$ in $\M$.
\end{lemma}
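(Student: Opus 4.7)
My plan is to iterate Lemma~\ref{L:fiber} by decomposing the forgetful map $\pi$ into single-point forgetful maps. I would first pass to the labelled cover $\M^{\mathrm{ord}}(0^n)$: let $\widetilde{\N}$ denote a component of the preimage of $\N$, and factor $\widetilde{\pi} = \pi_1 \circ \pi_2 \circ \cdots \circ \pi_n$, where $\pi_k$ forgets the $k$-th marked point.

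I then build a descending tower of affine invariant submanifolds $\widetilde{\N} = \N_n \supseteq \N_{n-1} \supseteq \cdots \supseteq \N_0 = \M$ by setting $\N_{k-1} := (\pi_k)_* \N_k$. Pushforwards respect composition of $\GL(2,\R)$-equivariant maps, so $\N_0 = \widetilde{\pi}_* \widetilde{\N} = \pi_*\N = \M$ as claimed. The key bookkeeping step is to show that each $\N_k$ sits inside $\N_{k-1}(0)$ as a genuine subset: by the definition of pushforward, $\pi_k(\N_k)$ coincides with $\N_{k-1}$ up to a set of measure zero, and then the closedness of $\N_{k-1}$ together with continuity of $\pi_k$ upgrades this to the honest inclusion $\pi_k(\N_k) \subseteq \N_{k-1}$, i.e., $\N_k \subseteq \pi_k^{-1}(\N_{k-1}) = \N_{k-1}(0)$.

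With the tower in place, I would apply Lemma~\ref{L:fiber} at each level, taking the ambient affine invariant submanifold to be $\N_{k-1}$ and the marked-point extension to be $\N_k$. The hypothesis $(\pi_k)_* \N_k = \N_{k-1}$ is built into the construction, and the previous lemma therefore yields openness of each $\pi_k\colon \N_k \to \N_{k-1}$. A composition of open maps is open, so $\widetilde{\pi}\colon \widetilde{\N} \to \M$ is open; descending through the (open, surjective) finite cover $\widetilde{\N} \to \N$ gives openness of $\pi\colon \N \to \M$ as well. Since the genericity hypothesis places $(X,\omega;P)$ in $\N$, any open neighborhood $U$ of it in $\N$ has $\pi(U)$ open in $\M$ and containing $\pi(X,\omega;P) = (X,\omega)$, finishing the proof.

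The main obstacle I anticipate is the bookkeeping in the second paragraph, namely ensuring each $\N_k \subseteq \N_{k-1}(0)$ on the nose rather than merely up to measure zero; without that observation the inductive step cannot literally invoke Lemma~\ref{L:fiber}. Everything else is a mechanical iteration of that previous result.
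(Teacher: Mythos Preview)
Your proposal is correct and is essentially the paper's argument, only more carefully spelled out. The paper's proof is a terse two sentences: pass to $\M^{\mathrm{ord}}(0^n)$, note that each $\pi_k$ is open by Lemma~\ref{L:fiber}, and conclude that $\pi = \pi_1 \circ \cdots \circ \pi_n$ is open. Your tower $\N_n \supseteq \cdots \supseteq \N_0 = \M$ and the verification that $\N_k \subseteq \N_{k-1}(0)$ are precisely the bookkeeping the paper suppresses when it writes ``each $\pi_k$ restricted to $\N$''; without your intermediate $\N_k$'s that phrase is, strictly speaking, ill-typed for $k<n$. One small sharpening: the cleanest way to get $\pi_k(\N_k) \subseteq \N_{k-1}$ is to observe that $\N_k$ is the support of $\mu_{\N_k}$, so by continuity of $\pi_k$ the image lies in the support of $(\pi_k)_*\mu_{\N_k}$, which is $\N_{k-1}$.
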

\begin{proof}
Without loss of generality, we will work on $\M^{ord}(0^n)$. Since each $\pi_k$ restricted to $\N$ is open by Lemma~\ref{L:fiber} and since $\pi = \pi_1 \circ \hdots \circ \pi_n$, $\pi$ restricted to $\N$ is open as well.  
\end{proof}

%
%
%
%

\section{Holomorphic Sections over Varieties containing a Teichm\"uller Disk - Proof of Theorem~\ref{T2}}\label{S:T2}

Throughout this section we will make the following assumption.

\begin{ass}\label{A:sections}
Let $\Gamma$ be a torsionfree finite index subgroup of the mapping class group. Let $C$ be a complex analytic subvariety of $\mathrm{Teich}_{g,n} / \Gamma$ where $\mathrm{Teich}_{g,n}$ is the Teichm\"uller space of a closed genus $g$ surface with $n$ punctures and so that $3g-3+n > 0$.  Let $\pi: \mathcal{C}_{g,n} \ra \mathrm{Teich}_{g,n} / \Gamma$ be the universal curve. Suppose additionally that $C$ contains a Teichm\"uller disk generated by the quadratic differential $(X, q)$ where $X$ is a Riemann surface and $q$ a quadratic differential on $X$. Suppose that $\mathcal{Q}$ is the stratum of quadratic differentials to which $(X, q)$ belongs and let $\M$ be the orbit closure of $(X, q)$ in $\mathcal{Q}$. 
\end{ass}


\begin{lemma}\label{L:equivariant}
Every holomorphic section of $\pi$ defined over $C$ induces a $\GL(2, \R)$-equivariant section of the forgetful map from $\overline{\M(0) }$ to $\M$.
\end{lemma}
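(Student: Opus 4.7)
The plan is to first construct $\tilde{s}$ on the dense $\SL(2, \R)$-orbit $\mathcal{O} := \SL(2, \R) \cdot (X, q)$ in $\M$ and then to extend to $\M = \overline{\mathcal{O}}$ via an orbit-closure argument in $\overline{\M(0)}$. A useful preliminary observation is that $\pi(\M) \subseteq C$: since $\pi(\mathcal{O}) \subseteq D \subseteq C$ and $C$ is closed, $\pi(\M) = \pi(\overline{\mathcal{O}}) \subseteq \overline{D} \subseteq C$, so the value of $s$ is defined on the underlying Riemann surface of every point of $\M$.

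To construct $\tilde{s}$ on $\mathcal{O}$, I would lift the restriction $s|_D$ along the universal covering $\mathbb{H} \to D$, whose deck group is the Veech group $\Gamma_v$, obtaining a holomorphic $\Gamma_v$-equivariant section $\hat{s}$ of the pulled-back universal curve. The crucial observation is that rigidity of the Teichm\"uller-disk family structure forces every holomorphic section of the pullback curve to be $\SL(2, \R)$-equivariant: letting $A_z \in \SL(2, \R)$ denote a choice of element with $A_z \cdot i = z$, one has $\hat{s}(z) = A_z \cdot p_0$ for the specific point $p_0 := \hat{s}(i) \in X$ transported by the affine action. The $\Gamma_v$-equivariance of $\hat{s}$ then forces $p_0$ to be fixed by the affine self-maps $\phi_\gamma$ of $(X, q)$ for all $\gamma \in \Gamma_v$. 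This yields a well-defined $\SL(2, \R)$-equivariant section $\tilde{s} \colon \mathcal{O} \to \M(0)$ by $\tilde{s}(A \cdot (X, q)) := (A \cdot (X, q); A \cdot p_0)$, and since the scaling part of $\GL(2, \R)$ acts trivially on marked points, $\tilde{s}$ is in fact $\GL(2, \R)$-equivariant on $\mathcal{O}$.

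To extend to $\M$, I take $\N := \overline{\SL(2, \R) \cdot (X, q; p_0)}$ in $\overline{\mathcal{Q}(0)}$, which is an affine invariant submanifold by Eskin--Mirzakhani--Mohammadi and lies in $\overline{\M(0)}$ by continuity of the forgetful map together with $\pi(\N) \subseteq \overline{\mathcal{O}} = \M$. The forgetful map $\pi_{\N} \colon \N \to \M$ is then $\GL(2, \R)$-equivariant, proper (its fibers are closed subsets of compact Riemann surfaces), surjective (its image is closed, $\GL(2, \R)$-invariant, and contains the dense $\mathcal{O}$), and bijective on $\mathcal{O}$ by construction. A dimension count gives $\dim \N = \dim \M$, so the generically bijective finite $\GL(2, \R)$-equivariant holomorphic map $\pi_\N$ between affine invariant submanifolds is in fact a bijection, and $\N$ is the graph of the desired $\GL(2, \R)$-equivariant section $\tilde{s} \colon \M \to \overline{\M(0)}$.

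I anticipate the main obstacle to be the rigidity argument in the second paragraph, namely that holomorphic sections of the universal curve over a Teichm\"uller disk are automatically $\SL(2, \R)$-equivariant; this requires a careful analysis of the complex structure on the Teichm\"uller curve over $\mathbb{H}$ and the resulting constraint that any holomorphic section takes the form $z \mapsto A_z \cdot p_0$ in affine charts. Once this is in hand, $\Gamma_v$-invariance of $p_0$ follows readily from $\Gamma_v$-equivariance of $\hat{s}$, and upgrading the generic bijectivity of $\pi_\N$ to genuine bijectivity is a routine consequence of properness and normality.
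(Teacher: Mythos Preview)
Your proposal has two difficulties, one acknowledged and one not.

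First, the ``rigidity'' you invoke in the second paragraph---that a holomorphic section of the universal curve over a Teichm\"uller disk is automatically $\SL(2,\R)$-equivariant---is the entire content of the lemma, and you have not indicated how to prove it. The paper's argument is via Kobayashi metrics: since $\pi\circ (s\circ\iota)=\iota$ is an isometry for the Kobayashi ($=$ Teichm\"uller) metric and both $\pi$ and $s$ are holomorphic hence distance-nonincreasing, the restriction $s|_D$ must itself be a Kobayashi isometry, so its image is a Teichm\"uller disk in $\mathcal C_{g,n}$. Uniqueness of the Teichm\"uller map (this is where torsionfreeness of $\Gamma$ enters) then forces $s|_D$ to intertwine Teichm\"uller geodesic flow on the two disks, which is the desired equivariance. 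There is no more elementary ``affine-chart'' argument that I know of.

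Second, and this is a genuine gap, your extension from $\mathcal O$ to $\M$ via orbit closures and EMM does not work as written. The claim ``a dimension count gives $\dim\N=\dim\M$'' is unjustified: bijectivity of $\pi_\N$ over the $3$-real-dimensional orbit $\mathcal O$ says nothing about the generic fiber over $\M$, since $\mathcal O$ is not open in $\M$. In fact $\dim\N=\dim\M$ is equivalent to $p_0$ being a periodic point, which is precisely the conclusion of Theorem~\ref{T2}---established only \emph{after} this lemma, using EMM applied to $\tilde s(\M)$. So invoking it here is circular. The repair is simpler than EMM and you already set it up: since $\pi(\M)\subseteq C$, the formula $\tilde s(X',q'):=(s(X'),q')$ defines a continuous section on all of $\M$ directly. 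Equivariance then follows either by running the Kobayashi argument on every Teichm\"uller disk in $\M$ (this is what the paper does), or equivalently by noting that equivariance is a closed condition and holds on the dense $\GL(2,\R)$-orbit.
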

\begin{proof}
Let $s: C \ra \cC_{g,n}$ be a holomorphic section of $\pi$. Let $\iota: \mathbb{D} \ra C$ be the inclusion of the Teichm\"uller disk  into $C$. The inclusion is an isometry with respect to the underlying Kobayashi hyperbolic metrics.  Since $\iota = \pi \circ s$ and $\pi$ and $s$ are contractions in the Kobayashi hyperbolic metrics, it follows that $s$ restricted to the embedded Teichm\"uller disk in $C$ is an isometry in the Kobayashi metrics. Therefore, $s\left( \iota\left( \mathbb{D} \right) \right)$ is a Teichm\"uller disk in $\cC_{g,n}$. 

Sufficiently close Riemann surfaces $X_1$ and $X_2$ contained in $\iota \left( \mathbb{D} \right)$ are joined by a dilatation minimizing homeomorphism given by a geodesic in $\iota\left( \mathbb{D} \right)$. By Teichm\"uller's theorem, this homeomorphism is unique up to pre- and post-composition with a conformal automorphism. However, since $\Gamma$ is torsionfree there are no such automorphisms that fix $X_1$ or $X_2$. If $\gamma$ is the geodesic from $X_1$ to $X_2$ in $\iota\left( \mathbb{D} \right)$, then $s(\gamma)$ is a geodesic of the same length in $\mathcal{C}_{g,n}$ and hence corresponds to a homeomorphism with the same dilatation. By uniqueness this path must correspond to the same homeomorphism and so if Teichm\"uller geodesic flow along $(X_1, q_1)$ produces the geodesic $\gamma$, Teichm\"uller geodesic flow along $(s(X), q)$ produces $s(\gamma)$. 

Let $\wt{s}: \M \ra \overline{\M(0) }$ be the section of the forgetful map from $\overline{\M(0) }$ to $\M$ given by sending a quadratic differential $(X, q)$ to $\left( s(X), q \right)$. The argument above shows that this map is $\GL(2, \R)$-equivariant on Teichm\"uller disks since it is equivariant under complex scalar multiplication and Teichm\"uller geodesic flow. Since $\M$ is foliated by $\GL(2, \R)$ invariant Teichm\"uller disks the claim follows.
\end{proof}

\begin{rem}
The proof of Lemma~\ref{L:equivariant} only uses the hypothesis that $\Gamma$ is torsionfree, not that it is finite index.
\end{rem}
%
\begin{proof}[Proof of Theorem~\ref{T2}:]
Let $s$ be a holomorphic section of $\pi$ defined over $C$. Since it is a continuous section, it follows that $s(C)$ and hence $\wt{s}\left( \M \right)$ is closed. By Lemma~\ref{L:equivariant}, $\wt{s}\left( \M \right)$ is closed and $\GL(2, \R)$-invariant and therefore it is an affine-invariant submanifold by Eskin-Mirzakhani-Mohammadi~\cite{EMM}. Notice that this application of Eskin-Mirzakhani-Mohammadi uses the fact that $\Gamma$ is finite-index since this implies that the Lebesgue measure of the collection of unit area half-translation surfaces in $\M$ is finite. Since $\wt{s}\left( \M \right)$ is an affine invariant-submanifold that does not coincide with $\M(0)$, it follows that the point that $s$ marks above $X$ is a periodic point, zero, or pole of $(X, q)$.
\end{proof}

\begin{rem}
The same proof shows that measurable equivariant sections of the forgetful map from $\overline{\M}(0)$ to $\M$ only mark periodic points, zeros, or poles. In the measurable setting, the section is used to pushforward Lebesgue measure to a measure on  $\overline{\M(0)}$, which must be Lebesgue measure on an affine invariant submanifold by Eskin-Mirzakhani~\cite{EM}. The details are omitted.
\end{rem}

%
%
%
%

\section{Explicit Translation Surfaces in Every Component of Every Stratum}\label{S:KZ-Surfaces}

In this section we will construct explicit generic translation surfaces in each connected component of every stratum of Abelian differentials. The connected components were classified by Kontsevich and Zorich~\cite{KZ}.

\begin{thm}[Kontsevich-Zorich~\cite{KZ}] All strata are connected except for the following:
\begin{itemize}
\item For $g>3$, $\mathcal{H}(2g-2)$ has three connected components characterized by odd spin, even spin, and hyperellipticity.
\item For odd $g > 3$, $\mathcal{H}(g-1, g-1)$ has three connected components characterized by odd spin, even spin, and hyperellipticity.
\item For even $g>3$, $\mathcal{H}(g-1,g-1)$ has two connected components characterized by hyperellipticity and nonhyperellipticity. 
\item For $g>3$, $\mathcal{H}(2k_1, \hdots, 2k_n)$ has two connected components characterized by odd and even spin (excluding the case $\mathcal{H}(g-1,g-1)$ for odd $g>3$, which, as mentioned above, has three components).
\item $\strata(4)$ and $\strata(2,2)$ have two connected components - a hyperelliptic and an odd one.
\end{itemize}
\end{thm}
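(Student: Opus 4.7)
The plan is to split the proof into three phases: define invariants that distinguish the listed components, check that each invariant is locally constant (hence constant on components), and then show these invariants are complete, i.e.\ any two surfaces in the same stratum sharing these invariants can be joined by a path.

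For the invariants, the key constructions are the \emph{spin parity} (Arf invariant) and \emph{hyperellipticity}. When every zero order $2k_i$ is even, the holomorphic one-form $\omega$ determines a quadratic form $q_\omega$ on $H_1(X;\mathbb{Z}/2)$ refining the intersection pairing: for an embedded smooth loop $\gamma$ avoiding zeros, set $q_\omega(\gamma) = \mathrm{ind}_\omega(\gamma) + 1 \pmod 2$, where $\mathrm{ind}_\omega(\gamma)$ is the winding number of the horizontal direction along $\gamma$ with respect to the flat metric. The Arf invariant of $q_\omega$ is the spin parity. Hyperellipticity is the condition that $(X,\omega)$ is a translation double cover of a sphere with a meromorphic quadratic differential; in the strata $\strata(2g-2)$ and $\strata(g-1,g-1)$ this gives the hyperelliptic component.

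Next one verifies local constancy. The spin parity is an integer-valued invariant defined from the flat structure and is visibly invariant under small perturbations in period coordinates, since $\mathrm{ind}_\omega$ of a fixed smoothly embedded loop does not change under small deformation of $\omega$. Hyperellipticity in the listed strata is both open (a small perturbation of a hyperelliptic surface inherits an involution close to the old one, which is still an involution) and closed (a limit of hyperelliptic involutions is one), so it carves out full components. One then checks by direct example that the spin parities and hyperelliptic/non-hyperelliptic realizations occur as indicated, giving the stated lower bounds on component counts.

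The core difficulty is the upper bound: any two surfaces in a fixed stratum sharing these invariants lie in the same component. The strategy I would follow, essentially due to Kontsevich and Zorich, is to introduce surgery operations --- breaking up a zero of order $m_1+m_2$ into two nearby zeros of orders $m_1$ and $m_2$, and the inverse operation of colliding two zeros --- together with a handle-bubbling move, and to show that these operations preserve the spin parity and hyperellipticity class while connecting arbitrary surfaces. The induction is on the complexity of the partition, reducing to the minimal and principal strata, where connectedness can be established by direct polygonal-model arguments and, in the two lowest-genus exceptional cases $\strata(4)$ and $\strata(2,2)$, by explicit enumeration. The main obstacle is showing that breaking-up/colliding zeros acts transitively on the set of configurations with fixed invariants inside each component: one must track the spin parity through the surgery (it only jumps in controlled, combinatorially predictable ways) and handle the hyperelliptic loci separately, using that the hyperelliptic involution is preserved only by symmetric surgeries. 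Once this transitivity is established, combined with the classification of invariants above, the stated decomposition into components follows.
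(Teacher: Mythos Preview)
The paper does not prove this theorem at all: it is stated with the attribution ``Kontsevich--Zorich~\cite{KZ}'' and immediately followed by the next cited result, with no proof environment. The theorem is quoted as background so that the author can construct, in each listed component, an explicit horizontally and vertically periodic surface satisfying the genericity criterion. So there is no ``paper's own proof'' to compare against.

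Your sketch is a reasonable high-level summary of the original Kontsevich--Zorich strategy (define spin parity and hyperellipticity, check local constancy, then use zero-splitting and bubbling-a-handle surgeries plus an induction on the stratum to reduce to minimal strata and show transitivity on invariant classes). As a sketch it is broadly accurate, though the actual work in \cite{KZ} is substantial: the transitivity step goes through an analysis of extended Rauzy classes of interval exchange transformations rather than a purely topological surgery argument, and the reduction to minimal strata requires careful tracking of which component one lands in after each surgery. If you intend this as a citation-level summary, it is fine; if you intend it as a self-contained proof, the Rauzy-class machinery (or an equivalent complete-invariant argument) is the missing ingredient, and your final paragraph underestimates how much combinatorics is hiding there.
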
 

To distinguish which connected component of a stratum a specific translation surface belongs to, we will use the following criterion.

\begin{thm}[Kontsevich-Zorich~\cite{KZ}, Corollary 2]
Let $\strata$ be a stratum of Abelian differentials. For each connected component $C$ of the minimal stratum there is a unique component of $\strata$ that contains $C$ in its closure.
\end{thm}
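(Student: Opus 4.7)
The plan is to establish the theorem in two stages: first existence of at least one component of $\strata$ whose closure contains $C$, and then uniqueness of such a component.

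For existence, I would fix $(X_0, \omega_0) \in C \subset \mathcal{H}(2g-2)$ and produce surfaces in $\strata = \mathcal{H}(k_1, \ldots, k_n)$ converging to it by opening up the single zero of $\omega_0$. In a local chart where $\omega_0 = z^{2g-2}\, dz$ on a disk $\{|z| < \delta\}$, I would replace $\omega_0$ inside the disk with a holomorphic $1$-form whose zeros have orders $k_1, \ldots, k_n$ and which agrees with $\omega_0$ near the boundary of the disk. Equivalently, perturb the local integral $f(z) = z^{2g-1}/(2g-1)$ to a nearby holomorphic function whose critical set realizes the prescribed multiplicity pattern. Letting $\delta \to 0$ yields a sequence in $\strata$ converging to $(X_0, \omega_0)$, so the component of $\strata$ containing these surfaces has $(X_0, \omega_0)$, hence all of the connected set $C$, in its closure.

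For uniqueness, I would show that at each $(X_0, \omega_0) \in C$ a neighborhood $V$ in the partial compactification of $\strata$ (obtained by allowing saddle connections between zeros to shrink) meets $\strata$ in a connected set. Extending period coordinates by opening-up parameters should identify $V$ with an open subset of $\C^{\dim_\C \strata}$, in which $\mathcal{H}(2g-2)$ appears as a complex-analytic subvariety of complex codimension $n-1 \geq 1$. Since the complement of a proper analytic subvariety of a connected complex manifold is connected, $V \cap \strata$ is connected. Thus at most one component of $\strata$ can have $(X_0, \omega_0)$ in its closure, and because $C$ is connected, only one component of $\strata$ can contain $C$ in its closure.

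The main obstacle is setting up the local period-coordinate description of the partial compactification near $\mathcal{H}(2g-2)$ precisely enough to confirm that the boundary stratum sits as an analytic subvariety of complex codimension $n-1$; the opening-up coordinates must be shown to depend holomorphically on the perturbation parameters and to extend the period coordinates of the minimal stratum. Once that local model is in hand, existence reduces to exhibiting one opening-up inside the chart, and uniqueness is immediate from the elementary fact that removing a complex-analytic subvariety does not disconnect a complex manifold locally.
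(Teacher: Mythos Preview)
The paper does not prove this statement; it is quoted as Corollary~2 of Kontsevich--Zorich~\cite{KZ} and used as a black box to identify which component a given explicit surface lies in. So there is no proof in the paper to compare your proposal against.

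That said, your outline is a reasonable sketch of how one would argue directly. The existence half is the standard ``breaking up a zero'' construction and is fine. For uniqueness, your idea of modeling a neighborhood in the partial compactification on an open set in $\C^{\dim_\C \strata}$ with the minimal stratum appearing as a proper analytic subvariety is the right picture, and the connectivity of the complement then gives what you want. The genuine work, as you note, is in building those extended period coordinates rigorously; this is done in the literature (e.g.\ via the space of zippered rectangles or the multi-scale/Bainbridge--Chen--Gendron--Grushevsky--M\"oller framework), but it is not a one-line matter. Kontsevich and Zorich themselves argue somewhat differently: they show that the invariants distinguishing components (parity of spin structure, hyperellipticity) are locally constant along degenerations colliding zeros, which gives uniqueness without needing the full local model of the compactification. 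Your approach is more geometric and conceptually clean, while theirs is more combinatorial and avoids the analytic setup you flagged as the main obstacle.
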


We are now in a position to create horizontally and vertically periodic translation surfaces in each component of each stratum. First, we establish a convention:

\textbf{Convention for Figures:} We will often use polygons, all of whose edges will be vertical or horizontal, to represent translation surfaces using the following two conventions. The edge of a polygon will mean a line segment in the boundary of the polygon that connects two vertices and has no vertex in its interior.
\begin{enumerate}
\item The intersection of a dotted line and an edge is a vertex of the polygon.
\item If a pair of unmarked vertical (resp. horizontal) edges contain interior points that can be connected by a horizontal (resp. vertical) line that lies in the interior of the polygon then they are identified.
\end{enumerate}
Under this convention the two translation surfaces in Figure~\ref{fig:important-convention} are identical:

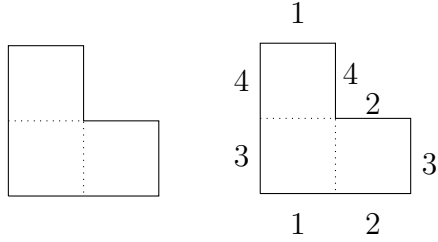
\begin{figure}[H]
	\begin{subfigure}[b]{0.2\textwidth}
        \centering
        		\begin{tikzpicture}
			\draw (0,1) -- (0,3) -- (1,3) -- (1,2) -- (2,2) -- (2,1) -- (0,1);
			\draw[dotted] (0,2) -- (1,2) -- (1,1);	
			\node at (-.5, .5) {$$};
		\end{tikzpicture}
	\end{subfigure}
	\qquad
        \begin{subfigure}[b]{0.2\textwidth}
        \centering
      	\begin{tikzpicture}
		\draw (0,0) -- (0,2) -- (1,2) -- (1,1) -- (2,1) -- (2,0) -- (0,0);
		\draw[dotted] (0,1) -- (1,1) -- (1,0);	
		\node at (.5,2.4) {$1$}; \node at (.5,-0.4) {$1$};
		\node at (1.5,1.2) {$2$}; \node at (1.5,-0.4) {$2$};
		\node at (-.25,.5) {$3$}; \node at (2.25,0.4) {$3$};
		\node at (-.25,1.5) {$4$}; \node at (1.2,1.6) {$4$};	
	\end{tikzpicture}
         \end{subfigure}
\caption{Equivalent representations of the same translation surface} 
\label{fig:important-convention}
\end{figure}
Using separatrix diagrams, Kontsevich and Zorich produce surfaces that belong to each component of the minimal stratum, see~\cite[Figure 4]{KZ}. The surfaces are represented as translation surfaces in Figure~\ref{fig:KZ-min}.

\begin{figure}[H]
    \begin{subfigure}[b]{0.3\textwidth}
        \centering
        \resizebox{\linewidth}{!}{\begin{tikzpicture}
		\draw (0,0) -- (0,1) -- (1,1) -- (1,2) -- (1.95,2) -- (1.95,1) -- (2.05,1) -- (2.05,2) -- (2.95,2) -- (2.95,1) -- (3.05, 1) -- (3.05,2) -- (4,2) -- (4,1);
		\draw[black, fill] (4.25,1.5) circle[radius=1pt];
		\draw[black, fill] (4.5,1.5) circle[radius=1pt];
		\draw[black, fill] (4.75,1.5) circle[radius=1pt];
		\draw (5,1) -- (5,2) -- (6,2) -- (6,1) -- (7,1) -- (7,0) -- (0,0);
		\draw[dotted] (1,0) -- (1,1) -- (2,1) -- (2,0) -- (2,1) -- (3,1) -- (3,0) -- (3,1) -- (4,1) -- (4,0);
		\draw[dotted] (5,0) -- (5,1) -- (6,1) -- (6,0);
		\node at (1.5,2.4) {$1$}; \node at (1.5,-0.4) {$1$};
		\node at (2.5,2.4) {$2$}; \node at (2.5,-0.4) {$2$};
		\node at (3.5,2.4) {$3$}; \node at (3.5,-0.4) {$3$};
		\node at (5.5,2.4) {$g-1$}; \node at (5.5,-0.4) {$g-1$};
	\end{tikzpicture}
        }
        \caption{$\strata^{odd}(2g-2)$}
        \label{fig:subfig1}
    \end{subfigure}
        \begin{subfigure}[b]{0.3\textwidth}
        \centering
        \resizebox{\linewidth}{!}{\begin{tikzpicture}
		\draw (0,0) -- (0,1) -- (1,1) -- (1,2) -- (1.95,2) -- (1.95,1) -- (2.05,1) -- (2.05,2) -- (2.95,2) -- (2.95,1) -- (3.05, 1) -- (3.05,2) -- (4,2) -- (4,1);
		\draw[black, fill] (4.25,1.5) circle[radius=1pt];
		\draw[black, fill] (4.5,1.5) circle[radius=1pt];
		\draw[black, fill] (4.75,1.5) circle[radius=1pt];
		\draw (5,1) -- (5,2) -- (6,2) -- (6,1) -- (7,1) -- (7,0) -- (0,0);
		\draw[dotted] (1,0) -- (1,1) -- (2,1) -- (2,0) -- (2,1) -- (3,1) -- (3,0) -- (3,1) -- (4,1) -- (4,0);
		\draw[dotted] (5,0) -- (5,1) -- (6,1) -- (6,0);
		\node at (1.5,2.4) {$1$}; \node at (1.5,-0.4) {$2$};
		\node at (2.5,2.4) {$2$}; \node at (2.5,-0.4) {$1$};
		\node at (3.5,2.4) {$3$}; \node at (3.5,-0.4) {$3$};
		\node at (5.5,2.4) {$g-1$}; \node at (5.5,-0.4) {$g-1$};
	\end{tikzpicture}
        }
        \caption{$\strata^{even}(2g-2)$}
        \label{fig:KZ-minB}
    \end{subfigure}
    \begin{subfigure}[b]{0.3\textwidth}
        \centering
        \resizebox{\linewidth}{!}{\begin{tikzpicture}
		\draw (0,0) -- (0,1) -- (1,1) -- (1,2) -- (1.95,2) -- (1.95,1) -- (2.05,1) -- (2.05,2) -- (2.95,2) -- (2.95,1) -- (3.05, 1) -- (3.05,2) -- (4,2) -- (4,1);
		\draw[black, fill] (4.25,1.5) circle[radius=1pt];
		\draw[black, fill] (4.5,1.5) circle[radius=1pt];
		\draw[black, fill] (4.75,1.5) circle[radius=1pt];
		\draw (5,1) -- (5,2) -- (6,2) -- (6,1) -- (7,1) -- (7,0) -- (0,0);
		\draw[dotted] (1,0) -- (1,1) -- (2,1) -- (2,0) -- (2,1) -- (3,1) -- (3,0) -- (3,1) -- (4,1) -- (4,0);
		\draw[dotted] (5,0) -- (5,1) -- (6,1) -- (6,0);
		\node at (1.5,2.4) {$1$}; \node at (1.5,-0.4) {$g-1$};
		\node at (2.5,2.4) {$2$}; \node at (2.5,-0.4) {$g-2$};
		\node at (3.5,2.4) {$3$}; \node at (3.5,-0.4) {$g-3$};
		\node at (5.5,2.4) {$g-1$}; \node at (5.5,-0.4) {$1$};
	\end{tikzpicture}
        }
        \caption{$\strata^{hyp}(2g-2)$}
        \label{fig:KZ-minC}
    \end{subfigure}
\caption{Surfaces in each component of the minimal stratum} 
\label{fig:KZ-min}
\end{figure}
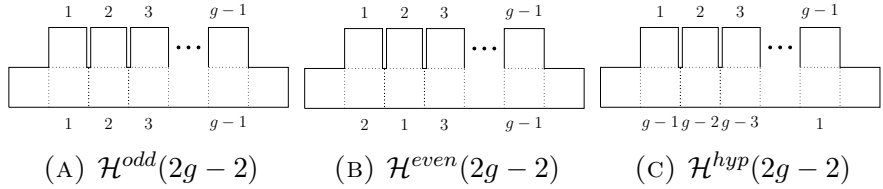

\begin{prop}[Genericity Criterion]\label{P:GC}
Suppose that $(X, \omega)$ is a translation surface in a component $\strata$ of a stratum of Abelian differentials of genus $g$ surfaces with $n$ zeros and no marked points. If $(X, \omega)$ has $g+n-1$ horizontal cylinders whose moduli satisfy no rational linear relation, then $(X, \omega)$ is generic.
\end{prop}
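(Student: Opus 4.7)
Let $\M$ denote the $\GL(2, \R)$-orbit closure of $(X, \omega)$; by Eskin-Mirzakhani-Mohammadi, $\M$ is an affine invariant submanifold of the component $\strata$, and it suffices to show $\dim_\C T_{(X,\omega)}\M = 2g + n - 1 = \dim_\C \strata$. I would first apply Theorem~\ref{T:W1} to the horizontal cylinder decomposition: since the moduli are rationally independent, the relation space $W$ is $0$, so $W^\perp = \C^{g+n-1}$, and the complex span
\[
V := \mathrm{Span}_\C\{\gamma_i^* : 1 \leq i \leq g+n-1\}
\]
is contained in $T_{(X,\omega)}\M$. I would then verify $\dim_\C V = g + n - 1$ by a pairing argument: for each cylinder $C_i$, pick an arc $\delta_i \in H_1(X, \Sigma; \Z)$ from a singularity on the bottom boundary of $C_i$ to one on the top boundary, lying entirely in $C_i$; then $\gamma_j^*(\delta_i) = [\gamma_j] \cdot [\delta_i] = \pm \delta_{ij}$, since $\delta_i$ crosses $\gamma_i$ once transversally and avoids the other core curves, so the pairing matrix is nondegenerate.

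Next I would analyze the projection $p: H^1(X, \Sigma; \C) \to H^1(X; \C)$ with kernel $\widetilde{H}^0(\Sigma; \C)$ of complex dimension $n - 1$. Because the $\gamma_i$ are pairwise disjoint simple closed curves, $p(V)$ is spanned by their absolute Poincar\'e duals and hence isotropic for the cup product, giving $\dim p(V) \leq g$. Combined with $\dim V = g + n - 1$ and the short exact sequence, this forces $\dim p(V) = g$ (so $p(V)$ is Lagrangian) and $V \cap \ker p = \widetilde{H}^0(\Sigma; \C)$ (so all REL deformations lie in $T\M$). By the Avila--Eskin--M\"oller--Filip theorem, $p(T\M)$ is a symplectic subspace of $H^1(X; \C)$; since any symplectic subspace of $H^1(X; \C)$ containing a Lagrangian of the whole space must coincide with the whole space (as $W^\perp \subset L^\perp = L \subset W$ forces $W^\perp = 0$), we have $p(T\M) = H^1(X; \C)$ and hence $\M$ has rank $g$. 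Therefore $\dim_\C T\M \geq 2g + (n-1) = \dim \strata$, and since $\M \subseteq \strata$ is closed, $\M = \strata$.

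I expect the main obstacle to be promoting the Lagrangian $p(V) \subset p(T\M)$ to all of $H^1(X; \C)$. Theorem~\ref{T:W1} alone produces only an isotropic subspace in the absolute cohomology projection, of complex dimension at most $g$; upgrading this to the full $2g$ dimensions needed for rank $g$ requires the separate and substantially deeper input, from Avila--Eskin--M\"oller and Filip, that $p(T\M)$ is symplectic. By contrast, the pairing step establishing linear independence of $\{\gamma_i^*\}$ is a short combinatorial check once one draws the dual arcs inside each cylinder.
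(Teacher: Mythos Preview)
Your proof is correct and follows essentially the same approach as the paper's: both apply Theorem~\ref{T:W1} to place the $\gamma_i^*$ in $T_{(X,\omega)}\M$, use disjointness of the core curves to show the image in absolute cohomology is isotropic, force it to be Lagrangian by the dimension count against $\ker p$, and then invoke Avila--Eskin--M\"oller to conclude $p(T\M)$ is symplectic and hence all of $H^1(X;\C)$. Your version is slightly more explicit in two places---the pairing with transverse arcs $\delta_i$ to establish $\dim_\C V = g+n-1$, and the one-line orthogonal-complement argument for why a symplectic subspace containing a Lagrangian of the ambient space must be everything---but these are exactly the details the paper elides.
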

\begin{proof}
Let $\M$ be the orbit closure of $(X, \omega)$. It suffices to show that $\M$ coincides with a component of $\strata$. By Wright~\cite[Corollary 3.4]{Wcyl} (Theorem~\ref{T:W1}), since the moduli satisfy no rational linear relation the tangent space of $\M$ at $(X, \omega)$ includes $\{ \gamma_1^*, \hdots, \gamma_{g+n-1}^*\}$ where $\gamma_i$ are core curves oriented from left to right of the horizontal cylinders and $\gamma_i^*$ denotes the dual cohomology class under the intersection pairing. For details on the identification of $T_{(X, \omega)} \M$ with a subspace of $H^1(X, \Sigma; \C)$, for $\Sigma$ the zero set of $\omega$, see Avila, Eskin, M\"oller~\cite{AEM}. The dual cohomology classes span a complex vector space of dimension $g+n-1$.

Let $p: T_{(X, \omega)} \M \ra H^1(X, \C)$ be the projection from relative to absolute cohomology. By Avila, Eskin, M\"oller~\cite{AEM} the image of the projection is a complex symplectic vector space. The kernel of the projection has (complex) dimension at most $n-1$. Since the projection of $\{\gamma_1^*, \hdots, \gamma_{g+n-1}^*\}$ spans an isotropic subspace, which has dimension at most $g$, it follows that the kernel of $p$ has dimension exactly $n-1$ and that the projection of $\{\gamma_1^*, \hdots, \gamma_{g+n-1}^*\}$ spans a Lagrangian subspace. Since $p\left( T_{(X, \omega)} \M \right)$ is complex symplectic it follows that $p$ is a surjection with maximal dimensional kernel. It follows that $T_{(X, \omega)} \M$ is isomorphic to $H^1(X, \Sigma; \C)$ and hence that $\M$ has full dimension. Since $\M$ is open and closed it must coincide with a component of $\strata$. 
\end{proof}

\subsection*{Generic Surfaces in $\strata^{hyp}(2g-2)$ and $\strata^{hyp}(g-1,g-1)$ } \text{}

It is straightforward to verify that the translation surfaces in Figure~\ref{fig:hyp-model} are in the indicated components; see for example~\cite[Section 2]{Apisa-hyp}. The genericity criterion (Proposition~\ref{P:GC}) implies that the translation surfaces are generic provided that all moduli of horizontal cylinders satisfy no rational linear relation.

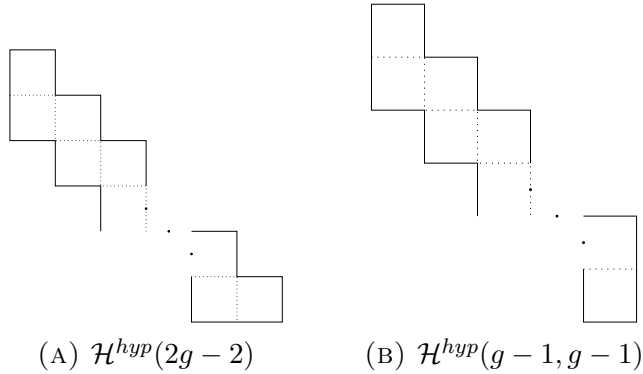
\begin{figure}[H]
    \begin{subfigure}[b]{0.3\textwidth}
        \centering
        \resizebox{\linewidth}{!}{
\begin{tikzpicture}
\draw (2, -4) -- (2, -3) -- (1, -3) -- (1, -2) -- (0, -2) -- (0,0) -- (1,0) -- (1, -1) -- (2, -1) -- (2, -2) -- (3, -2) -- (3, -3) ;
\draw[dotted] (0,-1) -- (1,-1) -- (1, -2) -- (2, -2) -- (2, -3) -- (3, -3) -- (3, -4);
\draw[dotted] (4,-5) -- (5, -5) -- (5, -6);
\draw[fill] (3, -3.5) circle [radius = .5 pt];
\draw[fill] (3.5, -4) circle [radius = .5 pt];
\draw[fill] (4, -4.5) circle [radius = .5 pt];
\draw (4, -4) --(5, -4) -- (5, -5) -- (6, -5) -- (6, -6) -- (4, -6) -- (4, -5);
\end{tikzpicture}
        }
        \caption{$\strata^{hyp}(2g-2)$}
        \label{fig:subfig1}
    \end{subfigure} \qquad
        \begin{subfigure}[b]{0.3\textwidth}
        \centering
        \resizebox{\linewidth}{!}{
\begin{tikzpicture}[scale = .75]
\draw (2, -4) -- (2, -3) -- (1, -3) -- (1, -2) -- (0, -2) -- (0,0) -- (1,0) -- (1, -1) -- (2, -1) -- (2, -2) -- (3, -2) -- (3, -3) ;
\draw[dotted] (0,-1) -- (1,-1) -- (1, -2) -- (2, -2) -- (2, -3) -- (3, -3) -- (3, -4);
\draw[dotted] (4,-5) -- (5, -5) -- (5, -6);
\draw[fill] (3, -3.5) circle [radius = .5 pt];
\draw[fill] (3.5, -4) circle [radius = .5 pt];
\draw[fill] (4, -4.5) circle [radius = .5 pt];
\draw (4, -4) --(5, -4) -- (5, -6) -- (4, -6) -- (4, -5);
\end{tikzpicture}
        }
        \caption{$\strata^{hyp}(g-1,g-1)$}
        \label{fig:subfig2}
    \end{subfigure}
\caption{Hyperelliptic Translation Surfaces} 
\label{fig:hyp-model}
\end{figure}

\subsection*{Generic Surfaces in Even Components of Strata}\label{Evens} \text{}

To find a surface in $\strata^{even}(2k_1, \hdots, 2k_n)$ start with the surface in Figure~\ref{fig4}, set $g = 1 + \sum_i k_i$, and then collapse every saddle connected labelled $a_i$ except those in $S:=\{a_0, a_{k_1}, a_{k_1 + k_2}, \hdots, a_{\sum_{i=1}^{n-1} k_i} \}$. This surface is in the even component since collapsing $S - \{a_0\}$ is a path in the stratum whose endpoint is a surface in the even minimal component (specifically the surface in Figure~\ref{fig:KZ-minB}). By the genericity criterion (Proposition~\ref{P:GC}) whenever the vertical cylinders have rationally unrelated moduli this surface is generic.

\begin{figure}[H]
\centering

\begin{tikzpicture}[scale = .75]
\draw (1,0) -- (0,0) -- (0,1) -- (1,1);
\draw (1,1) -- (1,2) -- (2,2) -- (2,1) -- (3,1) -- (3,2) -- (4,2) -- (4,1) -- (5,1) -- (5,2) -- (6,2) -- (6,1);
\draw[fill] (7.25,1.5) circle [radius = .5pt];
\draw[fill] (7.5,1.5) circle [radius = .5pt];
\draw[fill] (7.75,1.5) circle [radius = .5pt];
\draw (9,1) -- (9,2) -- (10,2) -- (10,1);
\draw (10,1) -- (11,1) -- (11,0) -- (10,0);
\node at (.5, 1.4) {$a_0$}; \node at (.5, -.4) {$a_0$};
\node at (1.25, 2.4) {$1$}; \node at (1.25, -.4) {$4$}; \draw[fill] (1.5,2) circle [radius = 2pt];
\node at (1.75, 2.4) {$2$}; \node at (1.75, -.4) {$3$}; \draw[fill] (1.5,0) circle [radius = 2pt];
\node at (2.5, 1.4) {$a_1$}; \node at (2.5, -.4) {$a_1$};
\node at (3.25, 2.4) {$3$}; \node at (3.25, -.4) {$2$}; \draw[fill] (3.5,2) circle [radius = 2pt];
\node at (3.75, 2.4) {$4$}; \node at (3.75, -.4) {$1$}; \draw[fill] (3.5,0) circle [radius = 2pt];
\node at (4.5, 1.4) {$a_2$}; \node at (4.5, -.4) {$a_2$};
\node at (5.5, 2.4) {$5$}; \node at (5.5, -.4) {$5$};
\node at (6.5, 1.4) {$a_3$}; \node at (6.5, -.4) {$a_3$};
\node at (8.4, 1.4) {$a_{g-2}$}; \node at (8.4, -.4) {$a_{g-2}$};
\node at (9.5, 2.4) {$g+1$}; \node at (9.6, -.4) {$g+1$};
\draw[ dotted] (1,1) -- (1,0);
\draw[ dotted] (2,1) -- (2,0);
\draw[ dotted] (3,1) -- (3,0);
\draw[ dotted] (4,1) -- (4,0);
\draw[ dotted] (5,1) -- (5,0);
\draw[dotted] (5,1) -- (6,1);
\draw[ dotted] (6,1) -- (6,0);
\draw[dotted] (8,1) -- (8,0);
\draw[dotted] (1,1) -- (4,1);
\draw (8,1) -- (9,1);
\draw[ dotted] (9,1) -- (9,0);
\draw[ thick] (2,1)--(3,1);
\draw[ thick] (2,0)--(3,0);
\draw[ thick] (4,1)--(5,1);
\draw[ thick] (4,0)--(5,0);
\draw (6,1) -- (7,1);
\draw[ dotted] (7,1) -- (7,0);
\draw[dotted] (9,1) -- (10,1) -- (10,0);
\draw (10,0) -- (1,0);
\end{tikzpicture}
\caption{ $\strata^{even}(2, \hdots, 2)$ }
\label{fig4}
\end{figure}

\begin{lemma}\label{LL5}
Let $(X, \omega)$ be the surface just constructed in an even component $\strata$ of a stratum of Abelian differentials. Let $C$ be the unique horizontal cylinder that intersects every vertical cylinder. If $g > 2$ and $\strata$ is not hyperelliptic, then there are two equivalence classes of vertical cylinder $\cD_1$ and $\cD_2$, so that $C \cap \cD_i$ is connected for $i = 1, 2$. Notice that $C - \left( \cD_1 \cup \cD_2 \right)$ consists of two disjoint rectangles. For generic choices of the lengths of the horizontal saddle connections, these two rectangles have different horizontal lengths. 
\end{lemma}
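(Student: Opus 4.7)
The plan is to directly analyze the explicit cylinder decomposition of $(X,\omega)$ and exhibit the required equivalence classes.

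First I would identify the unique horizontal cylinder $C$ intersecting every vertical cylinder. The construction from Figure~\ref{fig4} yields a horizontal decomposition consisting of a single ``bottom strip'' together with $g-1$ peak cylinders sitting above it. Because every vertical trajectory must pass through the bottom strip to travel between distinct peaks, the bottom strip is the unique horizontal cylinder with the required property, while each peak cylinder meets only those vertical cylinders passing through that peak.

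Next I would enumerate the vertical cylinders and record the cyclic order in which they appear along the core curve of $C$. Going once around $C$, the consecutive regions encountered are: the gap cylinder $D_{a_0}$ at the surviving saddle connection $a_0$; the contiguous union of the bases of peaks $1$ and $2$ (contiguous because $a_1$ was collapsed), which is partitioned into the intersections of $C$ with the two coupled vertical cylinders $D_a$ and $D_b$ arising from the label-switching identification on peaks $1$ and $2$ that distinguishes the even component; then for each $j=3,\dots,g-1$ the base of peak $j$, equal to $C\cap D_j$ for the single vertical cylinder $D_j$ through that peak, possibly separated by surviving intermediate gap cylinders; and finally the rightmost gap cylinder $D_{a_{\mathrm{right}}}$, which is never collapsed.

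I would then take $\cD_1$ to be the equivalence class of $D_{a_0}$ and $\cD_2$ to be the equivalence class of $D_3$; the peak-$3$ cylinder exists because the hypotheses $g>2$ and $\strata$ even force the construction to contain at least the three peaks $1,2,3$. Each of $C\cap\cD_1$ and $C\cap\cD_2$ is a single rectangle, hence connected. Removing $\cD_1\cup\cD_2$ from the circle core of $C$ leaves precisely two arcs: one consisting of the combined peak-$1$ and peak-$2$ bases (connected because $a_1$ was collapsed into a single point), and the other consisting of the bases of peaks $4,\dots,g-1$ together with any surviving intermediate gap cylinders and $D_{a_{\mathrm{right}}}$. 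These are the two required rectangles.

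Finally I would verify that the two rectangles have distinct horizontal lengths for generic choices of parameters. The first rectangle has horizontal length $2W$, where $W=x_1+x_2=x_3+x_4$ is the common width of peaks $1$ and $2$ forced by the label-switching identification. The second rectangle has horizontal length equal to $A_{\mathrm{right}}+\sum_{j=4}^{g-1}(\text{width of peak }j)$ plus the widths of any surviving intermediate gaps. Both are affine-linear in the free parameters (the lengths of the horizontal saddle connections in the construction), and their coincidence defines a single hyperplane in parameter space, so the two rectangles have different horizontal lengths on a dense open subset. The main potential obstacle is careful bookkeeping of which cylinders survive after the collapses and in what cyclic order they appear along the core of $C$; the key ingredient is that the collapse of $a_1$ merges the peak-$1$ and peak-$2$ bases into a single connected rectangle.
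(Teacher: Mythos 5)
Your choice of $\cD_1$ is not actually an equivalence class, which is a genuine gap. You take $\cD_1$ to be the class of the gap cylinder $D_{a_0}$, but the core curve of $D_{a_0}$ is homologous to the core curve of the rightmost gap cylinder $D_{\mathrm{right}}$. To see this, consider the thin strip in $C$ running from $D_{a_0}$ leftward (through the identified left and right vertical sides of the polygon) into $D_{\mathrm{right}}$: its top and bottom both lie on the saddle connections $a_0$ and the rightmost unlabeled gap, which are each glued top-to-bottom, so the strip closes up into an annulus whose two boundary circles are precisely the core curves of $D_{a_0}$ and $D_{\mathrm{right}}$. (Equivalently, both core curves intersect $\gamma_C$ once and meet no peak core curve, so they pair identically against a symplectic basis built from the cylinder core curves.) Consequently the full equivalence class $\cD_1$ contains at least $\{D_{a_0}, D_{\mathrm{right}}\}$, and $C \cap \cD_1$ is a union of two disjoint rectangles, violating the connectedness requirement that the lemma demands of $\cD_1$. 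This is exactly why the paper never uses an ``outer'' gap cylinder on its own: its choices are peak cylinders (for $g>4$), the $a_1$-gap cylinder (whose itinerary passes between the coupled peaks $1$ and $2$, so the bounding region argument above fails for it), or the coupled class $\{D_a, D_b\}$, each of which pairs nontrivially against at least one peak core curve and is therefore genuinely isolated.

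A secondary issue: you assert that $a_1$ is collapsed, but in the construction $a_1$ survives whenever $k_1 = 1$ (for instance in $\strata^{\mathrm{even}}(2,\ldots,2)$, where $S = \{a_0, a_1, \ldots, a_{n-1}\}$), so the claimed ``contiguous union of the bases of peaks $1$ and $2$'' and the length $2W$ for the first rectangle are not correct in general. This is a bookkeeping error that could be patched, but the equivalence-class problem above is the real obstruction to the proof as written.
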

\begin{proof}
If $g > 4$ then we may set $\cD_1$ and $\cD_2$ equal to the vertical cylinder that passes through the horizontal saddle connection labelled $5$ and $6$ in Figure~\ref{fig4} respectively. Suppose now that $g \in \{3, 4\}$. 

Suppose first that the horizontal saddle connection labelled $a_1$ is uncollapsed. When $g = 4$, set $\cD_1$ and $\cD_2$ to be the vertical cylinders passing through the horizontal saddle connections labelled $a_1$ and $5$ respectively. When $g  = 3$ we take them to be the vertical cylinders passing through $a_0$ and $a_1$. 

Suppose now that $a_1$ is collapsed. Since $\strata$ is not hyperelliptic, $g = 4$. Set $\cD_1$ to be the equivalence class that contains the two cylinders that intersect the horizontal saddle connections labelled $\{1, 2, 3, 4\}$. Set $\cD_2$ to be the equivalence class of vertical cylinders intersecting the horizontal saddle connection labelled $5$.
\end{proof}

\subsection*{Generic Surfaces in Remaining Components}\label{Generals}

To find generic surfaces in all other connected components of the remaining strata we glue together copies of the surfaces in Figure~\ref{F:generic} along the horizontal cylinders that intersects all vertical cylinders. By the genericity criterion  (Proposition~\ref{P:GC}) whenever the vertical cylinders have rationally unrelated moduli this surface is generic.

\begin{figure}[H]
        \begin{subfigure}[b]{0.4\textwidth}
        \centering
        \resizebox{\linewidth}{!}{
\begin{tikzpicture}[scale = .75]
\draw (0,0) -- (0,1) -- (1,1) -- (1,2) -- (1.95,2) -- (1.95, 1) -- (2.05,1) -- (2.05,2) --  (2.95,2) -- (2.95,1) --  (3.05,1) -- (3.05,2) -- (4,2) -- (3.95, 2) -- (3.95, 1);
\draw[fill] (5,1.5) circle [radius = .5pt];
\draw[fill] (4.75,1.5) circle [radius = .5pt];
\draw[fill] (5.25,1.5) circle [radius = .5pt];
\draw (5.98,1) -- (5.98,2) ;
\draw (6.02,1) -- (6.02,2);
\draw (6,1) -- (6,2) -- (7,2) -- (7,1) -- (8,1) -- (8,0) -- (0,0);
\node at (1.5, 2.4) {$1$}; \node at (1.5, -.4) {$1$};
\node at (2.5, 2.4) {$2$}; \node at (2.5, -.4) {$2$};
\node at (3.5, 2.4) {$3$}; \node at (3.5, -.4) {$3$};
\node at (6.5, 2.4) {$r$}; \node at (6.5, -.4) {$r$};
\draw[dotted] (1,1) -- (1,0);
\draw[dotted] (2,1) -- (2,0);
\draw[dotted] (3,1) -- (3,0);
\draw[dotted] (4,1) -- (4,0);
\draw[dotted] (6,1) -- (6,0);
\draw[dotted] (7,1) -- (7,0);
\draw[dotted] (1,1) -- (4,1);
\draw[dotted] (6,1) -- (7,1);
\end{tikzpicture}
        }
        \caption{$\mathcal{H}(\hdots, 2r, \hdots)$}
        \label{F:generic1}
    \end{subfigure}
        \begin{subfigure}[b]{0.65\textwidth}
        \centering
        \resizebox{\linewidth}{!}{
\begin{tikzpicture}[scale = .75]
\draw (0,0) -- (0,1) -- (1,1) -- (1,2) -- (1.95,2) -- (1.95, 1) -- (2.05,1) -- (2.05,2) --  (2.95,2) -- (2.95,1) --  (3.05,1) -- (3.05,2) -- (4,2) -- (3.95, 2) -- (3.95, 1);
\draw[fill] (4.5,1.5) circle [radius = .5pt];
\draw[fill] (4.25,1.5) circle [radius = .5pt];
\draw[fill] (4.75,1.5) circle [radius = .5pt];
\draw (4.98,1) -- (4.98,2) ;
\draw (5.02,1) -- (5.02,2);
\draw (5, 2) -- (6, 2) -- (6, 1) -- (9, 1) -- (9, 2) -- (10,2) -- (10,1);
\draw (9.98,1) -- (9.98,2) ;
\draw (10.02,1) -- (10.02,2);
\draw[fill] (11,1.5) circle [radius = .5pt];
\draw[fill] (10.75,1.5) circle [radius = .5pt];
\draw[fill] (11.25,1.5) circle [radius = .5pt];
\draw (11.98,1) -- (11.98,2) ;
\draw (12.02,1) -- (12.02,2);
\draw (12, 2) -- (12.95,2) -- (12.95,1) -- (13.05,1) -- (13.05,2) -- (14,2) -- (14,1) -- (15, 1) -- (15, 0) -- (0,0);

\draw[dotted] (1,1) -- (1,0);
\draw[dotted] (2,1) -- (2,0);
\draw[dotted] (3,1) -- (3,0);
\draw[dotted] (5,1) -- (5,0);
\draw[dotted] (6,1) -- (6,0);
\draw[dotted] (7,1) -- (7,0);
\draw[dotted] (8, 1) -- (8, 0);
\draw[dotted] (9,1) -- (9,0);
\draw[dotted] (10,1) -- (10,0);
\draw[dotted] (12,1) -- (12,0);
\draw[dotted] (13,1) -- (13,0);
\draw[dotted] (14,1) -- (14,0);

\draw[dotted] (1,1) -- (3,1);
\draw[dotted] (6,1) -- (7,1);
\draw[dotted] (5,1) -- (6,1);
\draw[dotted] (9,1) -- (10,1);
\draw[dotted] (12,1) -- (14,1);

\node at (1.5, 2.4) {$a_1$}; \node at (1.5, -.4) {$a_1$};
\node at (2.5, 2.4) {$a_2$}; \node at (2.5, -.4) {$a_2$};
\node at (5.5, 2.4) {$a_n$}; \node at (5.5, -.4) {$a_n$};

\node at (6.5, 1.4) {$1$}; \node at (8.5, 1.4) {$2$};
\node at (6.5, -.4) {$2$}; \node at (8.5, -.4) {$1$};
\node at (7.5, 1.4) {$\alpha$}; \node at (7.5, -.4) {$\alpha$};

\node at (13.5, 2.4) {$b_1$}; \node at (13.5, -.4) {$b_1$};
\node at (12.5, 2.4) {$b_2$}; \node at (12.5, -.4) {$b_2$};
\node at (9.5, 2.4) {$b_m$}; \node at (9.5, -.4) {$b_m$};

\end{tikzpicture}
        }
        \caption{$\mathcal{H}(\hdots, 2n+1, 2m+1, \hdots)$}
        \label{F:generic2}
    \end{subfigure}
\caption{ Surfaces in $\mathcal{H}^{odd}$, $\mathcal{H}^{nonhyp}$, and connected strata}
\label{F:generic}
\end{figure}
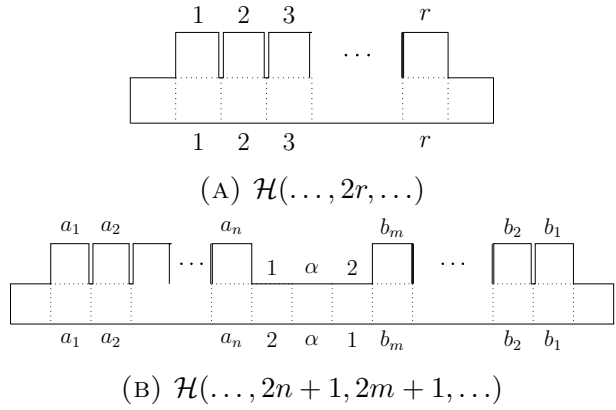

\begin{lemma}\label{LL51}
Lemma~\ref{LL5} holds for the surfaces just constructed in genus greater than two.
\end{lemma}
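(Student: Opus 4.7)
The plan is to follow the strategy of the proof of Lemma~\ref{LL5}: in each case of the construction, exhibit explicit vertical cylinders whose equivalence classes play the role of $\cD_1$ and $\cD_2$, and verify by inspection that the three required properties hold. In every constructed surface, $C$ is the common bottom horizontal cylinder along which the copies of Figure~\ref{F:generic} are glued, and every other horizontal cylinder is a height-$1$ bump resting on top of $C$ in one of the blocks; hence each vertical cylinder of the surface meets $C$ in a union of rectangles, one for each bump it passes through.

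For the candidate classes I would pick the vertical cylinders through two ``boundary bumps'' at opposite ends of the configuration: in a block of type Figure~\ref{F:generic1}, through the saddle connections labeled $1$ and $r$; in a block of type Figure~\ref{F:generic2}, through $a_1$ and $b_1$. Each such cylinder passes through a single bump, so it meets $C$ in a single rectangle, and since gluing occurs only along $C$ the analogous statement continues to hold in the glued surface. The two resulting components of $C - (\cD_1 \cup \cD_2)$ are visibly rectangles whose horizontal lengths are sums over disjoint subsets of the horizontal saddle-connection lengths; since those lengths are free parameters in the construction, a generic choice makes the two rectangle lengths unequal.

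To verify $\cD_1 \ne \cD_2$ in $\strata$, I would argue as in Lemma~\ref{LL5} that the two core curves are not rationally proportional in absolute homology, so by Theorem~\ref{T:W1} the two moduli can be independently deformed within $\strata$; the assumption $g > 2$ combined with non-hyperellipticity of $\strata$ forbids any affine symmetry of the surface that could swap the two boundary bumps and force a relation between their moduli.

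I expect the main obstacle to be verifying $\cD_1 \ne \cD_2$ when blocks of type Figure~\ref{F:generic2} appear: the crossing of the saddle connections $1$, $\alpha$, $2$ there complicates the homological bookkeeping, so one cannot simply read off the core curves' classes from a single picture. In each such case I would expect the verification to reduce to exhibiting an explicit cylinder deformation (for example, a standard shear or dilation from Theorem~\ref{T:W2}) that alters the modulus of $\cD_1$ alone, or of $\cD_2$ alone, which is precisely the kind of deformation made available by the genericity criterion Proposition~\ref{P:GC}.
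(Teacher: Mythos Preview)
Your strategy—exhibit two free vertical cylinders, each meeting $C$ in a single rectangle—matches the paper's, but your particular choices have a genuine gap. For a single block of type Figure~\ref{F:generic2} (the surface in $\strata(2n+1,2m+1)$) you propose the cylinders through $a_1$ and $b_1$; but when $n=0$ or $m=0$ one of these simply does not exist. This occurs already in genus $3$ for $\strata(1,3)$, and more generally in every $\strata(1,2m+1)$. The paper handles precisely this case by taking instead the vertical cylinder through the saddle connection labelled $\alpha$: that cylinder is always present in a Figure~\ref{F:generic2} block, lies entirely inside $C$, and meets $C$ in a single rectangle. Its partner is whichever of $a_1$, $b_1$ exists (the hypothesis $g>2$ forces $n+m\ge 1$, so at least one does). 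In the multi-block cases the paper likewise uses $\alpha$ (for Figure~\ref{F:generic2} blocks) and the cylinder through label $1$ (for Figure~\ref{F:generic1} blocks), choosing one from each of two distinct blocks, giving a short five-case list.

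Your final two paragraphs are also unnecessary and point in the wrong direction. The surfaces are constructed so that the moduli of the vertical cylinders satisfy no rational linear relation; hence any two of the candidate cylinders are automatically in distinct $\strata$-equivalence classes, with no homological computation, no appeal to affine symmetries, and nothing delicate about the $1,\alpha,2$ crossing once one has selected cylinders (such as $\alpha$, $a_1$, $b_1$, or labels $1$ and $2$ in a Figure~\ref{F:generic1} block) whose intersection with $C$ is visibly a single rectangle.
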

\begin{proof}
Suppose first that the surface has two zeros of even order. Then the surface contains two copies of the surface in Figure~\ref{F:generic1}. It is sufficient to take the two vertical cylinders that pass through the horizontal saddle connection labelled $1$. Similarly, if there are four zeros of odd order we have two copies of the surface in Figure~\ref{F:generic2} and we take the two vertical cylinders that pass through the horizontal saddle connection labelled $\alpha$.

If there is one zero of even order and two zeros of odd order, then we have a surface like the one in Figure~\ref{F:generic1} and one like the  one in Figure~\ref{F:generic2} and we take the vertical cylinder passing through the saddle connection labelled $1$ on the first and the one passing through the horizontal saddle connection labeled $\alpha$ on the second. 

If there is only one zero, then it is of even order and we take the two vertical cylinders that pass through $1$ and $2$. If there are only two zeros and both are of odd order then we take the vertical cylinder passing through $\alpha$ and the vertical cylinder passing through $a_1$ or $b_1$ (whichever exists). 
\end{proof}

\begin{defn}
Each of the nonhyperelliptic surfaces constructed in this section contain a horizontal cylinder that intersects every vertical cylinder. This cylinder will be called the central horizontal cylinder.
\end{defn}

%
%

\section{Marked Points in Cylinders}~\label{S:Cylinder}

In this section we will prove results about marked points and cylinders that form the technical core of the paper. We will make the following standing assumption:

\begin{ass}
$\M$ is an affine invariant submanifold and $(X, \omega)$ is a translation surface whose $\GL(2, \R)$ orbit closure is $\M$
\end{ass}

\begin{lemma}\label{L:ec}
Let $P$ be a collection of distinct points on $(X, \omega)$ and suppose that $\M'$ is the orbit closure of $(X, \omega; P)$. If $\cC$ is an $\M$-equivalence class of cylinders on $(X, \omega)$, then $\cC'$ is an $\M'$-equivalence class where $\cC'$ contains the cylinders in $\cC$ divided into subcylinders by the points in $P$. 
\end{lemma}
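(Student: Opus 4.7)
The plan is to verify two things: that any two cylinders in $\cC'$ are $\M'$-equivalent, and that $\cC'$ is maximal among such families. Throughout I will write $\pi$ for the map forgetting $P$, so that $\pi_* \M' = \M$. The key structural observation I would record first is that every cylinder $D$ on $(X, \omega; P)$ is contained in a unique maximal cylinder $\bar D$ on $(X, \omega)$: if $D$ contains no marked points in its interior then $\bar D = D$, and otherwise $\bar D$ is obtained from $D$ by ``erasing'' the horizontal segments through points of $P$ that form the new boundary of $D$. In either case, the core curves of $D$ and $\bar D$ coincide as loops and hence have identical holonomy, so two cylinders on $(X, \omega; P)$ are parallel iff their ambient cylinders on $(X, \omega)$ are parallel. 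By construction, $D \in \cC'$ iff $\bar D \in \cC$.

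For the first part, I would take $A, B \in \cC'$ and any $(Y; Q) \in \M'$ sufficiently near $(X, \omega; P)$. The projection $Y \in \M$ is then near $(X, \omega)$, and since $\bar A, \bar B \in \cC$ are $\M$-equivalent, their deformations to $Y$ remain parallel. The deformations of $A$ and $B$ sit inside these as subcylinders and share their core curves; hence $A$ and $B$ remain parallel on $(Y; Q)$, establishing $\M'$-equivalence.

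For maximality I would argue by contradiction. Suppose $D$ is a cylinder on $(X, \omega; P)$ with $\bar D \notin \cC$ that is nevertheless $\M'$-equivalent to some $A \in \cC'$. Applying Lemma~\ref{L:fiber} iteratively (as in Lemma~\ref{L:neighbor}) gives that $\pi : \M' \to \M$ is open, so some neighborhood $U$ of $(X, \omega; P)$ in $\M'$ projects onto a neighborhood of $(X, \omega)$ in $\M$. After shrinking $U$ so that $D$ and $A$ persist as parallel cylinders throughout, every $Y \in \pi(U)$ admits a lift $(Y; Q) \in U$ on which $D$ and $A$ are parallel. The structural observation then gives that $\bar D$ and $\bar A$ are parallel on $Y$, and letting $Y$ range over $\pi(U)$ forces $\bar D$ and $\bar A$ to be $\M$-equivalent, contradicting $\bar D \notin \cC$.

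The hard part will be this last step: openness of the forgetful map is exactly what lets parallelism inside $\M'$ descend to parallelism inside $\M$, and without it one cannot rule out that $\pi(U)$ fails to contain an open neighborhood of $(X, \omega)$, which would break the equivalence argument. Everything else is a routine unpacking of definitions once the identification $D \leftrightarrow \bar D$ is in place.
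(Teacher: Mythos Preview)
Your proposal is correct and follows essentially the same approach as the paper: both proofs verify that cylinders in $\cC'$ are $\M'$-equivalent by projecting a neighborhood in $\M'$ down to $\M$, and both establish maximality by invoking openness of $\pi$ (via Lemma~\ref{L:neighbor}) so that parallelism in $\M'$ descends to parallelism in $\M$. Your explicit $D \leftrightarrow \bar D$ bookkeeping is a nice addition, though your case distinction ``if $D$ contains no marked points in its interior then $\bar D = D$'' is slightly misstated (cylinders on $(X,\omega;P)$ never contain marked points in their interior; the relevant condition is whether $P$ meets the boundary of $D$), but this does not affect the argument.
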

\begin{proof}
First we will show that any two cylinders in $\cC'$ are $\M'$ equivalent. Let $C_i$ be two cylinders in $\cC'$ and let $\gamma_i$ be their core curves for $i = 1,2$. By assumption, there is a neighborhood $U$ of $(X, \omega)$ in $\M$ on which $\gamma_1$ and $\gamma_2$ are collinear. Let $U'$ be a preimage of the $U$ in $\M'$ on which $C_1$ and $C_2$ persist as cylinders. In this neighborhood, $\gamma_1$ and $\gamma_2$ must remain collinear and hence $\M'$-equivalent.

It remains to show that if $C_1$ and $C_2$ are $\M'$-equivalent cylinders with core curves $\gamma_1$ and $\gamma_2$, then $\gamma_1$ and $\gamma_2$ must be collinear on a neighborhood of $(X, \omega)$ in $\M$. By Lemma~\ref{L:neighbor} there is a neighborhood of $(X, \omega; P)$ in $\M'$ that projects to an open neighborhood of $(X, \omega)$ in $\M$. Let $U'$ be such a neighborhood of $(X, \omega; P)$ in $\M'$ where $\gamma_1$ and $\gamma_2$ are collinear and let $U$ be its image under the forgetful map. Since $\gamma_1$ and $\gamma_2$ are collinear on $U'$ they are collinear on $U$ and hence are homotopic to core curves of $\M$-equivalent cylinders.
\end{proof}

\begin{defn}
Given a cylinder $C$ in a translation surface we say that the height of the cylinder is the distance between the two boundaries in the flat metric. Suppose that $p$ is a marked point contained in a cylinder $C$. Let $h_C$ be the height of the cylinder and let $h_p$ be the distance from the point to one of the two boundary curves of $C$. We say that $p$ lies at rational height in $C$ if the ratio $\frac{h_p}{h_C}$ is rational. 
\end{defn}

\begin{lemma}[Rational Height Lemma]\label{L:RHL}
Let $\cC$ be an equivalence class of cylinders so that any two have a rational ratio of moduli. If a periodic point belongs to the interior of a cylinder in $\cC$ then it lies at rational height.
\end{lemma}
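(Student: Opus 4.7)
My plan is to apply Wright's cylinder deformation theorem (Theorem~\ref{T:W1}) to the orbit closure $\M'$ of $(X,\omega;p)$, exploiting the fact that a periodic marked point makes the forgetful map $\pi \colon \M' \ra \M$ a local isomorphism of tangent spaces.

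Setting up: since $p$ is periodic, $\dim \M' = \dim \M$, so by Lemma~\ref{L:fiber} the map $\pi$ is finite and $\pi_* \colon T\M' \ra T\M$ is an isomorphism. The kernel of the underlying restriction map $H^1(X, \Sigma \cup \{p\}; \C) \ra H^1(X, \Sigma; \C)$ is the one-dimensional line $\C\tau_p^*$, where $\tau_p^*$ is dual to a relative cycle joining a base zero to $p$. Hence periodicity of $p$ is equivalent to $T\M' \cap \C\tau_p^* = 0$. My goal is to show that if $h_p/h_C$ is irrational then this line does meet $T\M'$, producing a contradiction.

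By Lemma~\ref{L:ec}, the cylinder $C \in \cC$ containing $p$ splits into subcylinders $C^{top}$ and $C^{bot}$ of heights $h_C - h_p$ and $h_p$, producing an $\M'$-equivalence class $\cC'$. Let $W$ and $W'$ denote the rational linear relations on the moduli of $\cC$ and $\cC'$ respectively; every element of $W$ lifts to an element of $W'$ by setting the $C^{top}$ and $C^{bot}$ coefficients equal. Using that all modular ratios within $\cC$ are rational together with $m_{C^{top}} + m_{C^{bot}} = m_C$, any additional relation in $W'$ satisfying $w'_{C^{top}} \neq w'_{C^{bot}}$ would express $m_{C^{top}}$ as a rational multiple of $m_1$, forcing $h_p/h_C \in \Q$. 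So under the assumption of irrational height, every $w' \in W'$ satisfies $w'_{C^{top}} = w'_{C^{bot}}$, which is to say $(1,-1,0,\ldots,0) \in (W')^\perp$. Theorem~\ref{T:W1} applied to $(\M',\cC')$ with this vector then yields $c_C (\gamma_{C^{top}}^* - \gamma_{C^{bot}}^*) \in T\M'$.

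The main obstacle I anticipate is the cohomological step identifying $\gamma_{C^{top}}^* - \gamma_{C^{bot}}^*$ with a nonzero multiple of $\tau_p^*$ in $H^1(X, \Sigma \cup \{p\}; \C)$. The argument should be that the two core curves are homologous as absolute cycles in $X$ but cobound a flat annulus whose only puncture is $p$; consequently the difference of their duals vanishes on $H^1(X, \Sigma; \C)$ while picking up $\pm 1$ on any relative cycle terminating at $p$ that threads this annulus. Once this identification is in hand, the tangent vector produced above is a nonzero element of $T\M' \cap \C\tau_p^*$, contradicting periodicity, and we conclude that $h_p/h_C \in \Q$.
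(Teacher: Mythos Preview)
Your overall strategy---produce the class $\gamma_{C^{top}}^* - \gamma_{C^{bot}}^*$ in $T\M'$ and then identify it with a nonzero element of $\ker\pi_*$---is exactly the paper's, and your cohomological identification of that difference with (a multiple of) $\tau_p^*$ is correct. The gap is in how you manufacture the tangent vector.

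Theorem~\ref{T:W1} as stated applies only to a \emph{horizontally periodic} surface and takes as input the moduli of \emph{all} horizontal cylinders, not a single equivalence class. Your analysis of $W'$ controls only the rational relations among the moduli of cylinders in $\cC'$; it says nothing about relations that mix moduli in $\cC'$ with moduli of horizontal cylinders in other $\M'$-equivalence classes, and nothing prevents such relations from having $w'_{C^{top}} \neq w'_{C^{bot}}$. (Indeed, $m_{C^{top}}$ could happen to be a rational multiple of the modulus of some unrelated horizontal cylinder.) If the surface is not horizontally periodic the theorem does not apply at all. So Theorem~\ref{T:W1} does not directly yield $(1,-1,0,\dots,0)\in (W')^\perp$ in the sense required, and the step ``Theorem~\ref{T:W1} applied to $(\M',\cC')$'' is not justified by the tools the paper provides.

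The paper avoids this by arguing dynamically from Theorem~\ref{T:W2} alone. The standard shear $\sigma_{\cC'}$ is tangent to $\M'$; flowing along it projects to the standard shear $\sigma_{\cC}$ on the unmarked surface, which is \emph{periodic} because the moduli in $\cC$ are pairwise rationally related. After one period the unmarked surface returns to $(X,\omega)$ while $p$ has moved along its core curve by an amount proportional to $h_p/h_C$, so under the irrational-height assumption the return points are dense in that core curve. Hence the fibre of $\M'\to\M$ over $(X,\omega)$ contains the entire core curve, which forces $\gamma_{C^{top}}^* - \gamma_{C^{bot}}^* \in T\M'$ and gives the same contradiction you reach. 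Replacing your invocation of Theorem~\ref{T:W1} with this flow argument repairs the proof.
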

\begin{proof}
Let $p$ be a periodic point contained in the interior of a cylinder in $\cC$. Let $\M'$ be the orbit closure of $(X, \omega; p)$. Let $\cC'$ be the collection of subcylinders on $(X, \omega; p)$ into which $\cC$ is divided. By Lemma~\ref{L:ec}, $\cC'$ is an $\M'$ equivalence class. Let $\sigma_{\cC'}$ be the standard shear on $\cC'$. Since the cylinders in $\cC$ have a rational ratio of moduli, the flow along $\sigma_{\cC}$ is periodic. Suppose to a contradiction that $p$ does not have rational height. In this case, the flow along $\sigma_{\cC'}$ is not periodic and so the orbit closure of $(X, \omega; p)$ contains $(X, \omega; q)$ where $q$ is any point in $C$ along the core curve of $C$ that intersects $p$.

Let $\gamma_1$ and $\gamma_2$ be the two core curves of the cylinders into which $p$ divides $C$. Since $p$ may be moved along the core curve of $C$ while fixing all cylinders in $(X, \omega)$, it follows that the tangent space of $\M'$ at $(X, \omega; p)$ contains the deformation $\gamma_1^* - \gamma_2^*$. Let $U'$ be a neighborhood as in Lemma~\ref{L:neighbor} of $(X, \omega; p)$ in $\M'$ on which the cylinders in $\cC$ persist. This neighborhood projects to a neighborhood $U$ of $(X, \omega)$ in $\M$. Since the tangent space contains the deformation $\gamma_1^* - \gamma_2^*$ the fiber of the projection from $\M'$ to $\M$ that forgets marked points has real dimension at least one. Therefore, the dimension of $\M'$ is strictly larger than the dimension of $\M$, which contradicts the assumption that $p$ is a periodic point.  
\end{proof}

%

\begin{lemma}\label{calc1}
Let $(X, \omega)$ be a generic translation surface in an affine invariant submanifold $\M$. Let $C$ be a horizontal cylinder, and let $\cD_1, \cD_2$ be two vertical distinct $\M$-equivalence classes of cylinders such that 
\begin{enumerate}
\item The intersection of $\cD_i$ with the interior of $C$ is connected and nonempty for $i = 1, 2$.
\item Any cylinder equivalent to $C$ has a modulus that is an integer multiple of the modulus of $C$. 
\end{enumerate}
If $p$ is an $\M$-periodic point in the interior of $C$, then up to relabelling $\cD_1$ and $\cD_2$, the point $p$ is at the center of the rectangle given by the intersection of $\cD_1$ and $C$. Furthermore, removing $\cD_1$ and $\cD_2$ divides $C$ into two rectangles of equal size. 
\begin{figure}[H]
            \begin{tikzpicture}[scale = .75]
                \draw[dashed] (0,3) -- (0,2);
                \draw (0,2) -- (0,0) -- (7,0);
                \draw[dashed] (1,3)--(1,0);
                \draw (1,2) -- (4,2);
                \draw[dashed] (4,3) -- (4,0);
                \draw[dashed] (5,3) -- (5,0);
                \draw (5,2) -- (7,2);
                \draw[black, fill] (.5, 1) circle[radius = 1.6pt];
                \node at (.5, .75) {$p$};
                \draw (-1,0) -- (-1,1) -- (-1.25, 1) -- (-.75, 1) -- (-1, 1) -- (-1,0) -- (-1.25, 0) -- (-.75, 0); \node at (-1.25, .5) {$h$};
                \draw (-2,0) -- (-2,2) -- (-2.25, 2) -- (-1.75, 2) -- (-2, 2) -- (-2,0) -- (-2.25, 0) -- (-1.75, 0); \node at (-2.25, 1) {$y$};
		\node at (.5, 3) {$\cD_1$}; \node at (4.5, 3) {$\cD_2$}; \node at (7.5, 1) {$C$};
		\draw (0, -1) -- (.5, -1) -- (.5, -.75) -- (.5, -1.25) -- (.5, -1) -- (0, -1) -- (0, -1.25) -- (0, -.75); \node at (.25, -1.25) {$q \ell_1$};
		\draw (0, -2) -- (1, -2) -- (1, -1.75) -- (1, -2.25) -- (1, -2) -- (0, -2) -- (0, -1.75) -- (0, -2.25); \node at (.5, -2.25) {$ \ell_1$};
		\draw (1, -1) -- (4, -1) -- (4, -.75) -- (4, -1.25) -- (4, -1) -- (1, -1) -- (1, -1.25) -- (1, -.75); \node at (2.5, -1.25) {$a$};
		\draw (4, -2) -- (5, -2) -- (5, -1.75) -- (5, -2.25) -- (5, -2) -- (4, -2) -- (4, -1.75) -- (4, -2.25); \node at (4.5, -2.25) {$ \ell_2$};
		\draw (5, -1) -- (8, -1) -- (8, -.75) -- (8, -1.25) -- (8, -1) -- (5, -1) -- (5, -1.25) -- (5, -.75); \node at (6.5, -1.25) {$b$};
            \end{tikzpicture}
             \caption{The lemma shows that $a = b$ and that, after scaling so $C$ has unit height, $q = h = \frac{1}{2}$}
            \label{fig:shearing}
\end{figure}
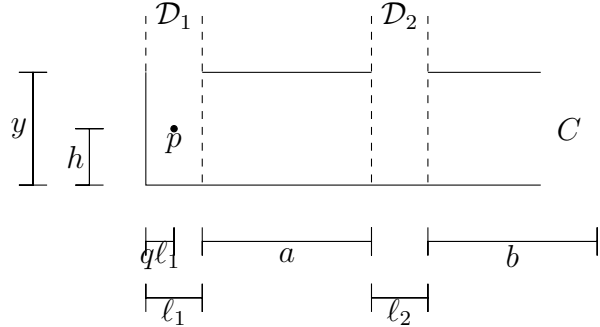
\end{lemma}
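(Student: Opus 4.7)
The plan is to exploit $p$ being $\M$-periodic. Let $\M'$ denote the orbit closure of $(X,\omega;p)$; then $\dim \M' = \dim \M$, so the forgetful projection $T\M' \to T\M$ is an isomorphism, and any shear deformation in $T\M$ has a unique lift to $T\M'$ that encodes exactly how $p$ responds. Combining this rigidity with Lemma~\ref{L:RHL} and with an appropriate Dehn twist made available by assumption~(2) should pin down the position of $p$.

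After scaling so that $y=1$, assumption~(2) says the moduli of cylinders in $\cC$ are rationally commensurable, so Lemma~\ref{L:RHL} applied to $\cC$ yields $h \in \Q$. Let $L_C$ denote the circumference of $C$, and rescale $\sigma_\cC$ so that $C$ undergoes one full Dehn twist (the time-$L_C$ flow); by assumption~(2) every other cylinder in $\cC$ undergoes an integer number of Dehn twists, so this flow returns $(X,\omega)$ to itself. By Lemma~\ref{L:ec} the subcylinders $C_1$ (below $p$, of height $h$) and $C_2$ (above $p$, of height $1-h$), together with the rest of $\cC$, form a single $\M'$-equivalence class $\cC'$, and by Theorem~\ref{T:W2} the standard shear $\sigma_{\cC'}$ lies in $T\M'$. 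Since $p$ is periodic, $\sigma_{\cC'}$ is forced to be the unique lift of $\sigma_\cC$, and a direct period computation---shear $C_1$ by $h$ and $C_2$ by $1-h$, so that the horizontal boundary carrying $p$ is displaced by $h$ while the top of $C$ is displaced by $h+(1-h)=1$---shows that at the moment $(X,\omega)$ first returns to itself, this lifted flow has translated $p$ horizontally by $h L_C$ along the core curve of $C$. Hence $p' := p + (h L_C, 0)$ is a new periodic point on the horizontal core curve through $p$.

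Iterating produces a cyclic orbit of periodic points in $C$ whose cardinality equals the denominator of $h$. Condition~(1) forces each $\cD_i \cap C$ to be a single rectangle, so $C \setminus (\cD_1 \cup \cD_2)$ consists only of the two complementary rectangles of widths $a$ and $b$ from the figure, and the iterated twist orbit of $p$ must distribute itself compatibly with $p \in \cD_1 \cap C$. Pushing this combinatorial restriction---that the orbit must fit inside this four-rectangle decomposition of $C$ and that $p$ must start in $\cD_1$---should force the orbit to consist of exactly two points, hence $h = 1/2$, and the second point $p + (L_C/2,0)$ must then land in $\cD_2 \cap C$; applying the centering conclusion to this new periodic point (for which the same hypotheses hold with $\cD_1,\cD_2$ swapped) forces it to be at the center of $\cD_2 \cap C$, which requires $a = b$. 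The horizontal centering $q=1/2$ follows from the parallel argument using the vertical lifts $\sigma_{\cD_i'} \in T\M'$, which move $p$ vertically by amounts proportional to its horizontal position inside $\cD_i$. The main obstacle is promoting the purely rational constraint from Lemma~\ref{L:RHL}---which a priori allows any denominator---to the exact value $1/2$: the required extra rigidity must come from combining the iterated twist orbit with the four-rectangle decomposition of $C$ given by condition~(1) and the integer-multiplicity hypothesis~(2).
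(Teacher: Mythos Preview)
Your proposal has a genuine gap at the crucial step. You correctly compute that the $\cC'$-twist moves $p$ horizontally by $hL_C$ and hence produces an equally-spaced orbit of $m$ periodic points on a core curve of $C$, where $m$ is the denominator of $h$. But nothing you have written forces $m=2$. The four-rectangle decomposition of $C$ from condition~(1) is purely set-theoretic; there is no reason an orbit of $m$ equally-spaced periodic points cannot distribute itself among all four rectangles. You explicitly flag this yourself (``the main obstacle is promoting the purely rational constraint \ldots\ to the exact value $1/2$''), and the proposal never supplies the missing mechanism. The parallel argument you sketch for $q=1/2$ is weaker still: there is no analogue of hypothesis~(2) for the vertical classes $\cD_i$, so the Rational Height Lemma does not even apply in that direction, and you cannot iterate a returning twist. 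Finally, you repeatedly write as though $p\in\cD_1\cap C$ is given, but that is part of the conclusion.

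The paper's proof supplies exactly the idea you are missing, and it is not combinatorial but analytic: introduce a \emph{continuous} parameter. Since $p$ is periodic, $\sigma_{\cD_1'}$ (the standard shear of $\cD_1$ subdivided by $p$) lies in $T\M'$; flow along it to increase the width of $\cD_1$ by an arbitrary real $s$ while fixing the complement, then perform one $\cC'$-Dehn-twist. The underlying surface depends only on $s$ (and returns to itself when $s=0$), but the horizontal position of the marked point inside $\cD_2\cap C$ is $h(\ell+s)-(1-q)(\ell_1+s)-a$, a genuine affine function of $s$. Because the fiber of $\M'$ over any surface is finite, this function must be constant in $s$, forcing $h=1-q$; shearing in the opposite direction gives $h=q$, hence $h=q=\tfrac12$, and evaluating at $s=0$ then yields $a=b$. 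The step you were missing is this use of a one-parameter family of deformations together with finiteness of the fiber to extract an \emph{equation}, rather than just a finite orbit. The reduction to $p\in\cD_1$ is handled separately (Part~1 of the paper's proof) by first widening $\cD_1$ with $\sigma_{\cD_1}$ until it occupies most of $C$ and then twisting $p$ into it.
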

%
%
\begin{proof}
Let $\M'$ be the orbit closure of $(X, \omega; p)$. Let $\cC$ be the collection of horizontal cylinders equivalent to $C$ on $(X, \omega)$.

For simplicity we begin by applying a element of $\GL_2(\R)$ so that $C$ has unit height. By the rational height lemma (Lemma~\ref{L:RHL}) the marked point $p$ lies at rational height $h$ in $C$. Since we have normalized the height of $C$ to be one this means that $h$ is rational, in particular suppose that $h = \frac{n}{m}$ where $n$ and $m$ are coprime positive integers. 

\noindent \textbf{Part 1: We may assume that $p$ belongs to $\cD_1$}

Suppose not. By Lemma~\ref{L:ec}, since $p$ is not contained in $\cD_1$, the standard shear $\sigma_{\cD_1}$ is tangent to $\M'$. Traveling in the $\sigma_{\cD_1}$ direction in $\M'$ from $(X, \omega; p)$ widens $\cD_1$ while fixing the part of the translation surface (and marked point) in the complement $\cD_1$. Travel in this direction until the intersection of $\cD_1$ and $C$ accounts for at least $\frac{m-1}{m}$ proportion of the area of $C$. Let $(Y, \eta)$ be the new translation surface. 



\noindent \textbf{Part 1a: $(Y, \eta)$ may be taken to be generic in $\M$}

We formed $(Y, \eta)$ by traveling in the $\sigma_{\cD_1}$ direction from $(X, \omega)$ in $\M$. Let $\ell$ be the segment joining $(X, \omega)$ to $(Y, \eta)$ in $\M$. Each proper affine invariant submanifold (of which there are only countably many by Eskin-Mirzakhani-Mohammadi~\cite{EMM}) contained in $\M$ intersects $\ell$ in a closed set. If a neighborhood $U$ of $(Y, \eta)$ had the property that every point in $U \cap \ell$ was contained in some proper affine invariant submanifold, then by the Baire category theorem there would be a proper affine invariant submanifold $\N$ so that $\N \cap \ell$ had interior in $U \cap \ell$. Since affine invariant submanifolds are linear this would imply that all of $\ell$ was contained in $\N$, which contradicts the fact that $\ell$ contains a translation surface $(X, \omega)$ that is generic in $\M$. Therefore, we may assume that the point $(Y, \eta)$ was chosen to be generic in $\M$.

\noindent \textbf{Part 1b: The hypotheses of the lemma continue to hold on $(Y, \eta)$ and the fiber of $\M'$ over $(Y, \eta)$ contains $(Y, \eta; p)$ where $p$ belongs to $\cD_1$}

Traveling in the $\sigma_{\cD_1}$ direction keeps $p$ fixed in the complement of $\cD_1 \cup \cD_2$ and keeps the heights of the cylinders in $\cC$ constant. By Wright~\cite[Theorem 1.9]{Wcyl}, the ratio of lengths of core curves of cylinders in $\cC$ are constant, and so the condition on moduli of cylinders in $\cC$ continues to hold. 

Letting $\cC'$ be the collection of cylinders on $(Y, \eta; p)$ that project to cylinders in $\cC$ we have that $\cC'$ is an $\M'$-equivalence class by Lemma~\ref{L:ec}. Travel from $(Y, \eta; p)$ in the $\sigma_{\cC'}$ direction until one complete Dehn twist has been performed in $C$. The resulting unmarked translation surface is $(Y, \eta)$ since all cylinders in $\cC$ have moduli that are integer multiples of the modulus of $C$. Since $\cD_1 \cap C$ is connected and accounts for at least $\frac{m-1}{m}$ of the area of $C$, it follows that the marked point $p$ now belongs to $\cD_1$ and we have shown that $(Y, \eta; p)$ belongs to $\M'$. 



\noindent \textbf{Part 1c: If the conclusion of the lemma holds on $(Y, \eta)$, it does so on $(X, \omega)$ as well}

To pass from $(X, \omega)$ to $(Y, \eta)$, we first traveled along $\sigma_{\cD_1}$ and then traveled along $\sigma_{\cC'}$ to perform one Dehn twist in $C$. If the conclusion of the lemma holds on $(Y, \eta)$, then traveling in the opposite direction along $\sigma_{\cC'}$ to perform the opposite Dehn twist in $C$ moves $p$ to the midpoint of the rectangle $\cD_2 \cap C$. Let $\cD_1'$ be the collection of subcylinders into which the cylinders in $\cD_1$ are divided by $p$. Traveling along $\sigma_{\cD_1'}$ back to $(X, \omega)$ keeps the complement of $\cD_1$ fixed and so the conclusion of the lemma held on $(X, \omega; p)$ as desired. Therefore, we may suppose without loss of generality (by replacing $(X, \omega)$ with $(Y, \eta)$) that $p$ belongs to $\cD_1$.


%

\noindent \textbf{Part 2: Determining the position of $p$}

Suppose now that $p$ belongs to $\cD_1$. As before, $\sigma_{\cD_2}$ is tangent to $\M'$. Travel from $(X, \omega; p)$ in the $\sigma_{\cD_2}$ direction in $\M'$ until the intersection of $\cD_2$ and $C$ accounts for at least $\frac{m-1}{m}$ proportion of the area of $C$. Without loss of generality, we may replace $(X, \omega; p)$ with the resulting marked translation surface. 

Let $\ell_i$ be the horizontal length of the rectangle $C \cap \cD_i$ for $i = 1,2$. The complement of $\cD_1 \cup \cD_2$ in $C$ is two disjoint rectangles. Let $a$ (resp. $b$) be horizontal length of the rectangle to the right (resp. left) of $\cD_1 \cap C$, see Figure~\ref{fig:shearing}. Let $\ell$ be the length of the core curve of $C$. Let $q \in [0, 1]$ be chosen so that $p$ is a distance of $q \ell_1$ from the left boundary of $\cD_1 \cap C$. Let $\cD_1'$ be the collection of subcylinders that $p$ divides $\cD_1$ into on $(X, \omega; p)$. 

Travel in the $\sigma_{\cD_1'}$ direction from $(X, \omega; p)$ so that the length of the core curve of $C$ increases by $s$ and then travel in the $\sigma_{\cC'}$ direction to perform exactly one Dehn twist in $C$. The distance of the marked point from the lefthand boundary of $\cD_2 \cap C$ is the following,
\[ h \left( \ell + s \right) - (1-q) (\ell_1 +s) - a \]
Traveling back along the $\sigma_{\cD_1'}$ direction returns to the unmarked surface $(X, \omega)$ while leaving the position of the marked point fixed in the complement of $\cD_1$. Since $p$ is a periodic point, the fiber of the forgetful map from $\M'$ to $\M$ over $(X, \omega)$ is finite.

Therefore, $h\left( \ell + s \right) -  (1-q) (\ell_1 +s) - a$ is constant as a function of $s$. In other words, $h = (1-q)$. If we sheared $C$ in the other direction we would have by symmetry that $h = q$ and so $q = h = \frac12$. By symmetry, after shearing the marked point into $\cD_2$ the distance from the lefthand boundary of $\cD_2 \cap C$ is $\ell_2/2$, i.e.
\[ \frac{1}{2} \left( \ell - \ell_1 \right) - a = \frac{\ell_2}{2} \]
Since $\ell = \ell_1 + a + \ell_2 + b$ we see that $a = b$ as desired.
\end{proof}

%
%

\section{Proof of Theorem~\ref{T1}}\label{S:T1}

Throughout this section, we make the following assumption.

\begin{ass}\label{A:MT}
Let $\M$ be an affine invariant submanifold in $\strata(0^n)$ where $\strata$ is an unmarked stratum of Abelian differentials. Suppose that $\M$ contains marked points on a translation surface $(X, \omega)$ that is generic in $\strata$ and is one of the surfaces constructed in Section~\ref{S:KZ-Surfaces}. Finally, suppose after passing to a finite cover that the marked points are labelled as $\{p_1, \hdots, p_n\}$. Let $\pi_k: \strata^{ord}(0^n) \ra \strata^{ord}(0^{n-1})$ be the map that forgets the $k$th marked point for $k \in \{1, \hdots, n\}$. 
\end{ass}

\begin{thm}\label{TS1}
Periodic points exist on $(X, \omega)$ if and only if $\strata$ is hyperelliptic, in which case they are Weierstrass points. 
\end{thm}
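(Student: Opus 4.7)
The plan is to split the theorem into an easy half and a harder half. For the easy half, when $\strata$ is hyperelliptic the hyperelliptic involution $\tau$ of $(X, \omega)$ is canonically determined by the flat structure and therefore $\GL(2,\R)$-equivariant; its finite fixed set (the Weierstrass points) is thus a $\GL(2,\R)$-canonical subset, and marking any one yields an affine-invariant submanifold of $\strata^{hyp}(0)$ of the same dimension as $\strata^{hyp}$, so each Weierstrass point is periodic.

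The harder half asserts that in a non-hyperelliptic component no periodic point exists on $(X, \omega)$, and that in a hyperelliptic component no non-Weierstrass periodic point exists. I would argue by contradiction, applying Lemma~\ref{calc1} to the central horizontal cylinder $C$ supplied by the construction of Section~\ref{S:KZ-Surfaces}. Suppose $p$ is periodic; by the rational height lemma (Lemma~\ref{L:RHL}) it lies at rational height in every cylinder through it. If $p$ lies in the interior of $C$, then in the non-hyperelliptic case Lemmas~\ref{LL5} and~\ref{LL51} supply two vertical $\M$-equivalence classes $\cD_1, \cD_2$ satisfying the hypotheses of Lemma~\ref{calc1}; the conclusion would force the two complementary rectangles of $C \setminus (\cD_1 \cup \cD_2)$ to have equal horizontal length, contradicting the generic choice of horizontal saddle-connection lengths in the construction. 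A parallel analysis on the hyperelliptic model surfaces of Figure~\ref{fig:hyp-model}, combined with the extra symmetry provided by $\tau$, should force $p$ to coincide with a Weierstrass point and so rule out any other periodic points.

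To reduce the general position of $p$ to lying in the interior of $C$, I would combine horizontal and vertical standard shears. If $p$ lies on a horizontal saddle connection, I would apply a small standard shear $\sigma_\cD$ for a vertical $\M$-equivalence class $\cD$ through $p$; by Lemma~\ref{L:ec} this deformation lifts to $\M'$, and for a generic shear parameter it moves $p$ off the saddle connection into the interior of a horizontal cylinder. If the resulting point lies in a non-central horizontal cylinder $T$, I would exploit the fact that every vertical cylinder meets $C$ by construction and perform a further vertical shear carrying $p$ across the boundary of $T$ into the interior of $C$, after which the preceding analysis applies.

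I expect the main obstacle to be this non-central reduction. Because each vertical cylinder $V$ through a non-central $T$ meets only $C$ and $T$ among horizontal cylinders, Lemma~\ref{calc1} cannot be applied to $T$ directly, so the reduction to $C$ is genuinely needed. Executing it requires ensuring that the shears used land in surfaces still generic in $\strata$ and that the marked-point orbit closure $\M'$ continues to support the hypotheses of Lemma~\ref{calc1}. I anticipate this will require a careful reading of the cylinder bookkeeping in Section~\ref{S:Cylinder}, together with Theorem~\ref{T:W2} to guarantee that the requisite shears remain tangent to $\M'$ throughout the reduction.
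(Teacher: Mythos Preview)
Your outline correctly identifies the first two moves: Weierstrass points are periodic by equivariance of $\tau$, and a periodic point in the interior of the central cylinder $C$ is ruled out by Lemma~\ref{calc1} together with Lemmas~\ref{LL5}--\ref{LL51}. The paper does exactly this. But the heart of the argument is the case you flag as ``the main obstacle,'' and here your plan does not work.

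Your reduction strategy is to shear $p$ into the interior of $C$. The paper shows this is not always possible: after exhausting the shearing tricks (this is Proposition~\ref{P:almost}, a four-case analysis close in spirit to what you sketch), the surviving configuration is $p$ lying on the \emph{boundary} of $C$ inside a vertical cylinder $V\subset C$ that is \emph{not} $\strata$-free. In this situation no shear tangent to $\M'$ moves $p$ off $\partial C$ without simultaneously moving the underlying surface, so you cannot land back on $(X,\omega)$ with $p$ in the interior of $C$. Your proposal contains no mechanism for this residual case.

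The paper's mechanism is degeneration and induction on $\dim_\C\strata$. One collapses a carefully chosen free cylinder (horizontal or vertical, depending on whether $\strata$ is even) using the standard dilation, passes to a boundary surface $(Y,\eta)$, invokes Mirzakhani--Wright~\cite{MirWri} to see that the orbit closure drops in dimension, checks via Proposition~\ref{P:GC} that $(Y,\eta)$ is still generic in its (lower-dimensional) stratum, and applies the inductive hypothesis there. The induction either produces a zero of odd order (killing even components) or forces a rational relation among moduli of vertical cylinders on the original $(X,\omega)$, contradicting genericity. None of this induction-by-degeneration appears in your proposal, and without it the boundary-of-$C$ case remains open.
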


\begin{thm}\label{TS2}
If $n \geq 2$ and $\left( \pi_k \right)_* \M = \strata^{ord}(0^{n-1})$ for all $k \in \{1, \hdots, n\}$ then $\strata$ is hyperelliptic, $n=2$, and the fiber of $\M$ over $(X, \omega)$ contains all pairs of distinct points exchanged by the hyperelliptic involution.
\end{thm}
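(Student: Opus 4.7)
The plan is to leverage the very strong hypothesis — that forgetting \emph{any} single marked point recovers the full stratum — by first extracting a dimension bound and then classifying the resulting periodic configurations using Theorem~\ref{TS1} and the cylinder analysis of Section~\ref{S:Cylinder}.

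\textbf{Dimension reduction.} Since $\M$ pushes forward to $\strata^{ord}(0^{n-1})$ under each $\pi_k$, we have $\dim \M = \dim \strata^{ord}(0^{n-1})$ and, by Lemma~\ref{L:fiber}, each $\pi_k\colon \M \to \strata^{ord}(0^{n-1})$ is open with generically finite fibers. In particular $(X,\omega; p_1,\ldots,\widehat{p_k},\ldots,p_n)$ is itself generic in $\strata^{ord}(0^{n-1})$, and $p_k$ is a periodic point of this marked surface.

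\textbf{Non-hyperelliptic case.} Assuming $\strata$ is not hyperelliptic, the plan is to rerun the cylinder argument underlying Theorem~\ref{TS1} on the explicit generic surface from Section~\ref{S:KZ-Surfaces}, incorporating the extra marked points $p_1,\ldots,p_{n-1}$. By Lemmas~\ref{LL5} and~\ref{LL51}, the central horizontal cylinder $C$ together with two vertical $\M$-equivalence classes $\cD_1,\cD_2$ satisfy the hypotheses of Lemma~\ref{calc1}. By Lemma~\ref{L:ec}, marking the additional generic points only refines these equivalence classes by subdivision, preserving the connectedness and integer-modulus hypotheses that Lemma~\ref{calc1} needs. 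Applying Lemma~\ref{calc1} to $p_n$, and then the same argument after interchanging horizontal and vertical, pins $p_n$ to incompatible positions and yields a contradiction, mirroring the non-hyperelliptic portion of the proof of Theorem~\ref{TS1}.

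\textbf{Hyperelliptic case.} For hyperelliptic $\strata$, the periodic points on a generic marked surface $(X,\omega; q_1,\ldots,q_m)$ are precisely the Weierstrass points and the hyperelliptic images $\tau(q_i)$ — an extension of Theorem~\ref{TS1} to marked surfaces, obtained by the same cylinder argument applied to the orbit closure carrying the hyperelliptic involution. Hence each $p_k$ is either a Weierstrass point or of the form $\tau(p_{j_k})$ for some $j_k \neq k$. A Weierstrass $p_k$ would already be periodic on $(X,\omega)$ itself, so forgetting some other $p_j$ would leave a non-generic configuration, contradicting the hypothesis. Thus $p_k = \tau(p_{j_k})$ for all $k$, and since $\tau$ is an involution the assignment $k\mapsto j_k$ is a fixed-point-free involution on $\{1,\ldots,n\}$. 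If $n\geq 4$, some pair $\{p_i,\tau(p_i)\}$ can be partially forgotten while another pair $\{p_\ell,\tau(p_\ell)\}$ survives intact, forcing the image of $\pi_i$ to lie in a proper sublocus of $\strata^{ord}(0^{n-1})$ — again a contradiction. Hence $n = 2$ and $p_2 = \tau(p_1)$, and any such hyperelliptic pair does lie in the fiber since the hyperelliptic involution commutes with the $\GL(2,\R)$-action.

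\textbf{Main obstacle.} The delicate step is the non-hyperelliptic case: one must verify that when extra generic marked points are introduced, the hypotheses of Lemma~\ref{calc1} — in particular the connected intersection of $\cD_i$ with $C$ and the integer-modular relations within the equivalence class of $C$ — survive the subdivision prescribed by Lemma~\ref{L:ec}. Handling the possibility that some $p_i$ already lies in $C$, $\cD_1$, or $\cD_2$ requires care, but should be manageable by shearing within equivalence classes disjoint from the $p_i$, using the genericity established in the dimension reduction step.
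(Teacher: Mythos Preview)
Your proposal has two genuine gaps, one in each branch.

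\textbf{Hyperelliptic case: circularity.} You invoke ``an extension of Theorem~\ref{TS1} to marked surfaces'' asserting that the periodic points of a generic $(X,\omega;q_1,\ldots,q_m)$ in a hyperelliptic component are exactly the Weierstrass points and the $\tau(q_i)$. But classifying periodic points on a generic surface with $m=n-1$ marked points amounts to classifying proper affine invariant submanifolds of $\strata^{ord}(0^n)$ whose pushforward under $\pi_n$ is $\strata^{ord}(0^{n-1})$ --- which is precisely the content of Theorem~\ref{TS3} at level $n$. Since Theorem~\ref{TS3} is deduced \emph{from} Theorems~\ref{TS1} and~\ref{TS2}, you cannot use it (or an equivalent statement) inside the proof of Theorem~\ref{TS2}. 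The remark ``obtained by the same cylinder argument'' does not discharge this: the proof of Proposition~\ref{P:hyp} depends on every horizontal and vertical cylinder being $\strata$-free, and once you add marked points that freeness, and the applicability of Lemma~\ref{calc1}, is exactly what is in question.

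\textbf{Non-hyperelliptic case: the obstacle you flag is real and unresolved.} Lemma~\ref{L:ec} says marked points subdivide equivalence classes. If any $p_i$ lies in $C$, then $C$ splits into subcylinders, and hypothesis~(2) of Lemma~\ref{calc1} --- that every cylinder equivalent to $C$ has modulus an integer multiple of that of $C$ --- is destroyed. You cannot ``shear within equivalence classes disjoint from the $p_i$'' to fix this, because you have no control over where the $p_i$ sit; the hypothesis only tells you that forgetting any one of them yields a generic surface, not that you can place them where you like. Even granting this, the non-hyperelliptic portion of Theorem~\ref{TS1} is not just one application of Lemma~\ref{calc1} but a multi-case analysis (Proposition~\ref{P:almost} plus an induction on $\dim_\C\strata$), and you would need to redo all of it with marked points present.

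\textbf{What the paper does instead.} The paper avoids both problems by not treating $p_n$ as a periodic point at all. It \emph{chooses} the first $n-1$ points explicitly, placing them inside the vertical cylinders $\cD_1,\cD_2$ at rationally independent heights so that the genericity criterion (Proposition~\ref{P:GC}) certifies $(X,\omega;p_1,\ldots,p_{n-1})$ is generic. It then argues directly (Lemma~\ref{L2}) that the fiber point $p_n$ must lie in the same $\cD_i$ as the others, forcing $n=2$. Finally, for $n=2$ it tracks how $p_2$ moves as $p_1$ moves (using that $\pi_2\colon\M\to\strata(0)$ is finite and cut out by real equations), concluding the two points move at equal speed in opposite directions; the asymmetry $a\neq b$ from Lemmas~\ref{LL5}--\ref{LL51} then rules out the non-hyperelliptic case. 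No appeal to Lemma~\ref{calc1} or to any classification of periodic points is made.
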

\noindent If $\strata$ is hyperelliptic, then given a pair of integers $\{i,j\}$ integers in $\{1, \hdots, n\}$, let $\strata_{ij}$ denote the subset of $\strata(0^n)$ where $p_i$ and $p_j$ are exchanged by the hyperelliptic involution. If $i = j$, this will means that $p_i$ is a fixed point of the hyperelliptic involution. We will prove the following strengthening of Theorem~\ref{T1}. 

\begin{thm}\label{TS3}
The stratum $\strata$ is hyperelliptic and there is a subset $S$ of pairs of integers in $\{1, \hdots, n\}$ so that $\ds{ \M = \bigcap_{\{i, j\} \in S} \strata_{ij} }$. 
\end{thm}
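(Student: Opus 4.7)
The plan is induction on $n$. The case $n = 0$ is trivial with $S = \emptyset$, interpreting the empty intersection as $\strata$, and $n = 1$ is immediate from Theorem~\ref{TS1}: either $\M = \strata(0)$ with $S = \emptyset$, or $p_1$ is a Weierstrass point, $\strata$ is hyperelliptic, and $\M = \strata_{11}$ with $S = \{\{1,1\}\}$.

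For the inductive step with $n \geq 2$, I would split into two cases based on the pushforwards $(\pi_k)_*\M$. If $(\pi_k)_*\M = \strata^{ord}(0^{n-1})$ for every $k$, then Theorem~\ref{TS2} forces $n = 2$, $\strata$ hyperelliptic, and $\M \supseteq \strata_{12}$; since $\strata_{12}$ is a maximal proper affine invariant submanifold of $\strata^{ord}(0^2)$, either $\M = \strata_{12}$ (take $S = \{\{1,2\}\}$) or $\M = \strata^{ord}(0^2)$ (take $S = \emptyset$). Otherwise, after re-indexing assume $\N := (\pi_n)_*\M \neq \strata^{ord}(0^{n-1})$. By induction, $\N = \bigcap_{\{i,j\} \in S'} \strata_{ij}$ for a nonempty set $S'$ of pairs in $\{1, \ldots, n-1\}$, so $\strata$ is hyperelliptic.

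To extend $S'$ to a set $S$ describing $\M$, I analyze the $n$-th marked point. Consider the pushforward $\N' := (\pi_1 \circ \cdots \circ \pi_{n-1})_*\M$, an affine invariant submanifold of $\strata(0)$. By Theorem~\ref{TS1}, either $\N' = \strata(0)$ or $\N' = \strata_{nn}$. In the latter case, $p_n$ is a Weierstrass point, and I adjoin $\{n,n\}$ to $S$. In the former case, $p_n$ is unconstrained in isolation; if $\M$ nevertheless imposes a constraint on $p_n$, then for some $j < n$ the two-point pushforward of $\M$ to $\strata^{ord}(0^2)$ (obtained by forgetting every marked point except $p_j$ and $p_n$) is a proper affine invariant submanifold. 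Applying the already-established $n = 2$ case of the theorem and using that $p_n$ is not Weierstrass, this two-point pushforward must equal $\strata_{jn}$, giving $p_n = \tau(p_j)$ and adjoining $\{j,n\}$ to $S$. Setting $S := S' \cup T$, where $T$ records these new constraints on $p_n$, yields the inclusion $\M \subseteq \bigcap_{\{i,j\} \in S} \strata_{ij}$.

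The main obstacle is the reverse inclusion, proving $\M = \bigcap_{\{i,j\} \in S} \strata_{ij}$ rather than merely containment. I would argue this by a dimension count at $(X, \omega)$: each independent constraint in $S$ drops complex dimension by exactly one, and the description of $\N$ from induction combined with the Weierstrass or $\tau$-pairing constraint on $p_n$ accounts for every codimension imposed on $\M$. Ruling out a hypothetical additional ``mixed'' affine relation, not of the form $\strata_{ij}$, is where the real work concentrates; it uses the cylinder deformation results of Section~\ref{S:Cylinder} together with the explicit generic surfaces of Section~\ref{S:KZ-Surfaces} and the Genericity Criterion (Proposition~\ref{P:GC}), which provide enough independent tangent directions in $T_{(X, \omega)} \M$ to exclude any further constraint beyond the tautological ones.
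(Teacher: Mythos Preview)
Your induction scheme and your handling of the case $(\pi_k)_*\M=\strata^{ord}(0^{n-1})$ for all $k$ match the paper. The divergence---and the gap---is in the remaining case.

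You forget only the single point $p_n$, obtain $\N=\bigcap_{S'}\strata_{ij}$ by induction, and then attempt to detect any further constraint on $p_n$ via two-point pushforwards $\M_{jn}$. Two claims here are unjustified. First, you assert that if $p_n$ is constrained (and not Weierstrass) then \emph{some} $\M_{jn}$ is proper in $\strata^{ord}(0^2)$; you give no argument for this. Second, even when $\M_{jn}$ is proper, the $n=2$ case allows it to equal $\strata_{jj}$ (a constraint on $p_j$ alone) rather than $\strata_{jn}$, so a proper two-point projection need not yield any relation involving $p_n$. Your final paragraph concedes that the reverse inclusion is ``the main obstacle'' and appeals vaguely to the cylinder machinery of Sections~\ref{S:KZ-Surfaces}--\ref{S:Cylinder}; but no concrete argument is given, and in fact none of that machinery is needed.

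The paper's proof avoids all of this with a sharper case division. Rather than always forgetting $p_n$, it asks whether \emph{some} $\pi_k$ drops dimension by one. If so, the induction hypothesis applied to $(\pi_k)_*\M$ yields $S$ directly, and equality with $\bigcap_S\strata_{ij}(0^n)$ follows because both sides are connected, closed, and have the same dimension. If no $\pi_k$ drops dimension (and not all $(\pi_k)_*\M$ are full), induction gives a constraint set $S_k$ for \emph{every} $k$; one then takes $S_1$ together with a single pair $\{1,\ell_1\}\in S_{\ell_2}$ (which necessarily lies outside $S_1$) to assemble enough independent codimension-one conditions to match $\operatorname{codim}\M$, and equality again follows by the connected--closed--same-dimension argument. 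The entire proof is a dimension count; no analysis of mixed affine relations is required.
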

\begin{proof}[Proof of Theorem~\ref{TS3} given Theorem~\ref{TS1} and Theorem~\ref{TS2}:]
Proceed by induction on $n$. The $n=1$ case is Theorem~\ref{TS1}. Suppose now that $n>1$. 

Suppose first that for some $k \in \{1, \hdots, n\}$, $(\pi_k)_* \M$ has dimension $\dim \M -1$. Suppose without loss of generality after relabelling that $k = 1$. By the induction hypothesis, $\strata$ is hyperelliptic and $\ds{ (\pi_1)_* \M = \bigcap_{\{i, j\} \in S} \strata_{ij}(0^{n-1}) }$ where $S$ is some subset of pairs of integers in $\{2, \hdots, n\}$. It follows that $\M$ is contained in $\ds{ \bigcap_{\{i, j\} \in S} \strata_{ij}(0^n) }$ and therefore coincides with it since both manifolds are connected, closed, and of the same dimension.

Suppose now that for all $k \in \{1, \hdots, n\}$, $\left( \pi_k \right)_* \M$ has the same dimension as $\M$ and that it does not coincide with $\strata(0^{n-1})$. By the induction hypothesis, for each $k$, there is a subset $S_k$ of pairs of integers in $\{1, \hdots, n\} - \{k \}$, so that $\ds{ \left( \pi_k \right)_* \M  = \bigcap_{\{i, j\} \in S_k} \strata_{ij} }$. The number of elements of $S_k$ is exactly the codimension of $\left( \pi_k \right)_* \M$ in $\strata(0^{n-1})$. Suppose without loss of generality that $\{1, \ell_1\}$ is contained in $S_{\ell_2}$. Then this pair cannot be contained in $S_1$ and so $S_1 \cup S_{\ell_2}$ contains at least as many elements as the codimension of $\M$ in $\strata(0^n)$. Hence, $\ds{ \M = \bigcap_{\{i, j\} \in S_1 \cup S_{\ell_2}} \strata_{ij}(0^{n}) }$ since both manifolds are connected, closed, and of the same dimension.

The only case that remains is when $(\pi_k)_* \M = \strata^{ord}(0^{n-1})$ for all $k \in \{1, \hdots, n\}$, which follows from Theorem~\ref{TS2}. 
\end{proof}

\section{Proof of Theorem~\ref{TS2}}

Assumption~\ref{A:MT} will remain in effect for this section. Assume too that $\M$ is a proper affine invariant submanifold in $\strata^{ord}(0^n)$ where $n \geq 2$ and suppose that $\left( \pi_k \right)_* \M = \strata^{ord}(0^{n-1})$ for all $k \in \{1, \hdots, n\}$. If $\strata$ is non-hyperelliptic then let $C$ be the central horizontal cylinder and if $\strata$ is hyperelliptic let $C$ be any horizontal cylinder that intersects two vertical cylinders. By Lemma~\ref{LL5} and~\ref{LL51}, there are two equivalence classes of vertical cylinders $\cD_1$ and $\cD_2$ whose intersection with $C$ is connected and nonempty.

\begin{lemma}\label{L2}
There are two marked points. If one marked point lies in a cylinder in $\cD_1$ or $\cD_2$, the other one lies in that cylinder as well. 
\end{lemma}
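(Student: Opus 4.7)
The plan is to exploit the pushforward hypothesis to pin down the codimension of $\M$, and then to apply Lemma~\ref{calc1} in a relative setting.

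First, I would count dimensions. Since $(\pi_k)_*\M = \strata^{ord}(0^{n-1})$ and $\M$ is a proper affine invariant submanifold of $\strata^{ord}(0^n)$, the complex codimension of $\M$ must equal exactly $1$: any higher codimension would force $(\pi_k)_*\M$ to be proper in $\strata^{ord}(0^{n-1})$, while codimension $0$ would give $\M = \strata^{ord}(0^n)$. Hence every generic fiber of $\pi_k|_\M$ over $\strata^{ord}(0^{n-1})$ is a finite set, realizing every marked point $p_k$ as ``relatively periodic'': once the surface and the other marked points are fixed, only finitely many positions of $p_k$ extend to a point of $\M$.

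Next I would carry out the cylinder argument. Suppose for contradiction that $p_i$ lies in a cylinder $D \in \cD_j$ while another $p_\ell$ does not. After perturbing $(X, \omega; P)$ to be generic in $\M$ using the Baire-category argument of Lemma~\ref{calc1} Part~1a, Lemma~\ref{L:ec} shows that the subdivision $\cD_j'$ of $\cD_j$ by marked points is an $\M$-equivalence class, so by Theorem~\ref{T:W2} the shear $\sigma_{\cD_j'}$ is tangent to $\M$. This shear moves $p_i$ along the core of $D$ while fixing any marked point outside $\cD_j$. Combining $\sigma_{\cD_j'}$ with the shear of the horizontal equivalence class of $C$, I would transport $p_i$ into $D \cap C$ without disturbing $p_\ell$. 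Applying a relative analog of Lemma~\ref{calc1} to $p_i$ then forces $p_i$ to sit at the center of $D \cap C$ and imposes the rectangle-width constraint $a = b$ on the two complementary rectangles of $C \setminus (\cD_1 \cup \cD_2)$; performing the analogous analysis on $p_\ell$ produces positions incompatible with $p_\ell \notin D$.

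For the cardinality claim $n = 2$, I would argue that with three or more marked points, iterated application of the cylinder-containment argument produces more pairwise ``must lie in the same cylinder'' constraints than a single complex linear equation defining $\M$ can accommodate, forcing $n \leq 2$; combined with $n \geq 2$ this gives $n = 2$.

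The main obstacle I anticipate is making the relative version of Lemma~\ref{calc1} rigorous: one must verify that the rational height lemma, the Dehn-twist return step in Part~1b, and the shearing of the $\cD_j'$ class all go through when the periodicity of $p_i$ is taken relative to $(X, \omega; P \setminus p_i)$ rather than to $(X, \omega)$ alone. These steps depend only on the orbit-closure structure expressed through Lemma~\ref{L:ec} and so should transfer essentially verbatim, but the bookkeeping through subdivisions by multiple marked points needs care.
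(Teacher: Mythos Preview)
Your codimension-one count is correct, and viewing each $p_k$ as periodic relative to the other points is the right perspective, but from there the plan diverges from the paper and does not close.

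The paper does not invoke Lemma~\ref{calc1} for this lemma at all. Instead it uses the pushforward hypothesis \emph{constructively}: since $(\pi_n)_*\M = \strata^{ord}(0^{n-1})$, Lemma~\ref{L:fiber} lets one \emph{choose} the first $n-1$ points as any generic configuration in $\strata^{ord}(0^{n-1})$ and then find a compatible $p_n$. The paper places $p_1$ in $\cD_1$ and $p_2,\dots,p_{n-1}$ in $\cD_2$ so that the resulting vertical subcylinders have moduli satisfying no rational relation; the genericity criterion then makes $(X,\omega;p_1,\dots,p_{n-1})$ generic. Whichever $\cD\in\{\cD_1,\cD_2\}$ avoids $p_n$, Theorem~\ref{T:W1} says each of its subcylinders shears independently inside $\M$, so any $p_k$ lying in $\cD$ can be moved in a one-parameter family while fixing all other marked points. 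That forces $\dim\M=\dim\strata^{ord}(0^n)$, contradicting properness. Since some $p_k$ with $k<n$ lies in $\cD$ unless $n=2$, the cardinality claim drops out immediately; the containment claim is then a perturbation of the same argument (via openness of $\pi_2$).

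Your route has two concrete gaps. First, the $n=2$ step is not an argument: ``more pairwise must-lie-in-the-same-cylinder constraints than one linear equation can accommodate'' does not yield a contradiction, because such constraints are perfectly compatible (put all points in one cylinder) and you have not shown they cut out independent codimension-one conditions on $\M$. Second, the containment step never reaches a contradiction. Even granting a relative version of Lemma~\ref{calc1}, its conclusion is that $p_i$ sits at the center of $D\cap C$ and that $a=b$; neither statement says anything about $p_\ell$, and the sentence ``performing the analogous analysis on $p_\ell$ produces positions incompatible with $p_\ell\notin D$'' is empty, since Lemma~\ref{calc1} gives no information about points not in the cylinder under consideration. (There is also the issue you acknowledge: once several marked points sit in $\cD_1\cup\cD_2$, the connectedness hypothesis of Lemma~\ref{calc1} for the subdivided classes is not automatic.) The missing idea is to \emph{pick} the first $n-1$ points, rather than analyze an arbitrary fiber, so that Theorem~\ref{T:W1} gives you independent shears directly.
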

\begin{proof}
 Let $P = \{p_1, \hdots, p_{n-1} \}$ be a collection of $n-1$ points where $p_1$ lies in $\cD_1$ and divides the cylinders in $\cD_1$ into subcylinders whose moduli admit no rational homogeneous linear relation. Suppose too that $\{p_2, \hdots, p_{n-1} \}$ belong to $\cD_2$ and divide it into vertical sub-cylinders whose moduli also admit no rational homogeneous linear relation. The genericity criterion (Proposition~\ref{P:GC}) implies that $(X, \omega; P)$ is generic in $\strata(0^{n-1})$. By Lemma~\ref{L:fiber}, there is a point $p_n$ so that $(X, \omega; P \cup \{ p_n \} )$ belongs to $\M$. 

Let $\cD$ be an equivalence class in $\{\cD_1, \cD_2\}$ that does not contain $p_n$. Since $\cD$ is its own equivalence class in $\strata$ and since it is divided into sub-cylinders whose moduli admit no rational homogeneous linear relation, it follows that each subcylinder in $\cD$ may be sheared while fixing the rest of the surface. Phrased differently, in the fiber of $\M$ over $(X, \omega)$ any marked point in $\cD$ may be moved freely while fixing all other points in $P$. However, if $p_k$ is a point (for $k \in \{1, \hdots, n-1\}$) that belongs to $\cD$, then the fiber of $\pi_k : \M \ra \strata^{ord}(0^{n-1})$ is one-dimensional and, by assumption $\left( \pi_k \right)_* \M = \strata^{ord}(0^{n-1})$. This implies that $\M = \strata^{ord}(0^n)$, which is a contradiction. Therefore, there are no points belonging to $\cD$ and so $n = 2$. 

For the final statement, suppose to a contradiction that $\{p_1, p_2\}$ is a fiber of $\M$ over $(X, \omega)$ under the map that forgets marked points and suppose too that $p_1$ belongs to $\cD$ for $\cD \in \{\cD_1, \cD_2 \}$, but that $p_2$ does not. Since the map $\pi_2$ that forgets the second point is open by Lemma~\ref{L:neighbor}, there is a nearby surface $(X, \omega; p_1', p_2')$ in $\M$ where $p_1'$ divides the cylinders in $\cD$ into subcylinders whose moduli admit no rational homogeneous linear relation and so that $p_2'$ remains outside of $\cD$. This contradicts the previous paragraph. 
\end{proof}

Since $\pi_2: \M \ra \strata(0)$ is a finite holomorphic map, we see that given a point $(X, \omega; p_1, p_2) \in \M$ we move $p_1$ to a new point $p_1'$ (at least locally) and there will be a unique nearby point $(X, \omega; p_1', p_2') \in \M$. Since the equations that define the affine invariant submanifold $\M$ have real coefficients, if $p_1$ moves horizontally (resp. vertically), so does $p_2$.

Now suppose without loss of generality that $p_1$ lies at irrational height in $\cD_1$ and at irrational height in $C$. Let $p_2$ be a point so that $(X, \omega; p_1, p_2) \in \M$. By Lemma~\ref{L2}, $p_2$ must also lie in the interior $\cD_1 \cap C$, which is a rectangle. Moving $p_1$ to the left we see that $p_1$ reaches the left boundary of $\cD_1 \cap C$ at the same moment that $p_2$ reaches the vertical boundary. Now reversing direction and moving $p_1$ to the right we see again that $p_1$ and $p_2$ arrive at the vertical boundary of $\cD_1$ at the same moment. This implies either that one point lies above the other and both move at the same speed in the same direction (horizontally) or that both points at some point were on opposite boundaries of $\cD_1$ and move in opposite directions (horizontally) at the same speed. The first case cannot occur, since if it does we may simply shear the central horizontal cylinder and find two marked points that do not lie above each other and that still move in the same direction at the same speed. The same argument applied to the vertical direction shows that when one point moves at unit speed in the $v$ direction, the other point moves at unit speed in the $-v$ direction. Coupled with the fact that the points arrive at the $\cD_1 \cap C$ boundary at the same times we have that when $\strata$ is hyperelliptic the two points are exchanged by the hyperelliptic involution.

We will now show that $\strata$ must be hyperelliptic. The present situation is pictured in Figure~\ref{fig:shearing2}. If $\strata$ is not hyperelliptic then we may ensure that $a < b$ (by Lemmas~\ref{LL5} and~\ref{LL51}). If $p_2$ moves to the right at unit speed, then $p_1$ moves to the left at unit speed and hence $p_2$ arrives in the interior of $\cD_2$ before $p_1$, contradicting Lemma~\ref{L2}.
\begin{figure}[H]
            \begin{tikzpicture}[scale = .75]
                \draw[dashed] (0,3) -- (0,2);
                \draw (0,2) -- (0,0) -- (7,0);
                \draw[dashed] (1,3)--(1,0);
                \draw (1,2) -- (4,2);
                \draw[dashed] (4,3) -- (4,0);
                \draw[dashed] (5,3) -- (5,0);
                \draw (5,2) -- (7,2);
                \draw[black, fill] (0, 1.5) circle[radius = 1.6pt]; \node at (.4, 1.5) {$p_1$};
                \draw[black, fill] (1, .5) circle[radius = 1.6pt]; \node at (.6, .5) {$p_2$};
                \draw (-2,0) -- (-2,2) -- (-2.25, 2) -- (-1.75, 2) -- (-2, 2) -- (-2,0) -- (-2.25, 0) -- (-1.75, 0); \node at (-2.25, 1) {$1$};
		\node at (.5, 3) {$\cD_1$}; \node at (4.5, 3) {$\cD_2$}; \node at (7.5, 1) {$C$};
		\draw (0, -2) -- (1, -2) -- (1, -1.75) -- (1, -2.25) -- (1, -2) -- (0, -2) -- (0, -1.75) -- (0, -2.25); \node at (.5, -2.25) {$ \ell_1$};
		\draw (1, -1) -- (4, -1) -- (4, -.75) -- (4, -1.25) -- (4, -1) -- (1, -1) -- (1, -1.25) -- (1, -.75); \node at (2.5, -1.25) {$a$};
		\draw (4, -2) -- (5, -2) -- (5, -1.75) -- (5, -2.25) -- (5, -2) -- (4, -2) -- (4, -1.75) -- (4, -2.25); \node at (4.5, -2.25) {$ \ell_2$};
		\draw (5, -1) -- (8, -1) -- (8, -.75) -- (8, -1.25) -- (8, -1) -- (5, -1) -- (5, -1.25) -- (5, -.75); \node at (6.5, -1.25) {$b$};
            \end{tikzpicture}
             \caption{Two marked points in the central horizontal cylinder}
            \label{fig:shearing2}
\end{figure}
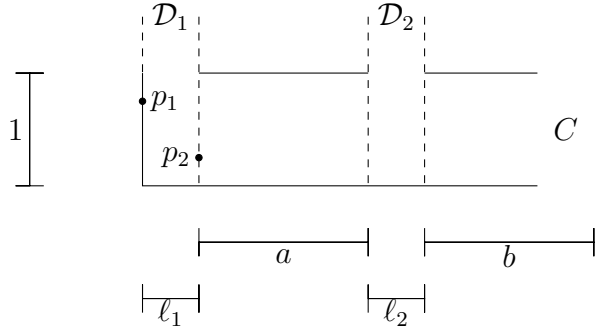

%
%

\section{Proof of Theorem~\ref{TS1}}

Throughout this section we will make the following assumption.

\begin{ass}\label{A-periodic}
Let $\strata$ be a stratum of Abelian differentials and let $(X, \omega)$ be a generic translation surface in $\strata$ constructed in Section~\ref{S:KZ-Surfaces}. Let $\M$ be an affine invariant submanifold properly contained in $\strata(0)$. By Lemma~\ref{L:fiber}, the fiber in $\M$ over $(X, \omega)$ is nonempty and any point $p$ in the fiber is a periodic point.
\end{ass}

\begin{defn}
A cylinder $C$ is called $\strata$-free if $\{ C \}$ is an $\strata$-equivalence class. This is equivalent to the condition that there is no other parallel cylinder $C$ with a core curve that is homologous to the core curve of $C$.
\end{defn}

\begin{prop}\label{P:hyp}
The periodic points on $\hyp(2g-2)$ and $\hyp(g-1,g-1)$ are exactly the Weierstrass points.
\end{prop}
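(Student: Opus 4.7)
The plan is to apply Lemma~\ref{calc1} on the hyperelliptic staircase models from Figure~\ref{fig:hyp-model} to constrain any periodic point to be a Weierstrass point. The converse direction is immediate: the Weierstrass locus in $\hyp(0)$ is a closed $\GL(2,\R)$-invariant finite-sheeted cover of $\hyp$, hence a proper affine invariant submanifold by Eskin-Mirzakhani-Mohammadi~\cite{EMM}, so Weierstrass points are indeed periodic.

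For the forward direction, I would first establish that the $\strata$-equivalence classes of cylinders on the staircase are exactly the orbits of the hyperelliptic involution $\tau$. Since $\tau$ satisfies $\tau^*\omega=-\omega$, it is an isometry and cylinders in a single $\tau$-orbit share a modulus, making them $\strata$-equivalent; generic choice of edge lengths (as allowed by Proposition~\ref{P:GC}) rules out further accidental equivalences. Condition~(2) of Lemma~\ref{calc1} is therefore automatic: equivalent cylinders have equal moduli. Now let $p$ be a periodic point and assume it lies in the interior of a horizontal cylinder $C$ and of a vertical cylinder (otherwise a small $\SL(2,\R)$ rotation places $p$ into cylinder interiors while preserving periodicity). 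On the staircase model, each horizontal cylinder $C$ admits two distinct vertical $\strata$-equivalence classes $\cD_1, \cD_2$ whose intersections with $C$ are connected and nonempty, namely those coming from the two vertical cylinders flanking the corresponding step. Lemma~\ref{calc1} forces $p$ to lie at the horizontal center of $C \cap \cD_i$ for some $i \in \{1,2\}$, and forces the two rectangles in $C$ complementary to $\cD_1 \cup \cD_2$ to have equal horizontal widths. A symmetric application of Lemma~\ref{calc1}, with $p$'s containing vertical cylinder playing the role of $C$ and adjacent horizontal equivalence classes playing the role of $\cD_1, \cD_2$, pins down the vertical coordinate of $p$ as well.

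Combining both constraints restricts $p$ to a finite list of candidate points on the model, and by direct inspection these candidates are precisely the fixed points of $\tau$, i.e., the Weierstrass points. The main obstacle is the low-genus cases $g=2$ and $g=3$, where the staircase has few cylinders and the existence of two vertical equivalence classes $\cD_1, \cD_2$ satisfying the hypotheses of Lemma~\ref{calc1} must be verified by a case-by-case inspection of the polygonal model; the two-singularity component $\hyp(g-1,g-1)$ has slightly different combinatorics (the two zeros are swapped by $\tau$ rather than fixed), and needs a parallel but separate treatment in which one first verifies that the flanking vertical cylinders across a step give distinct equivalence classes.
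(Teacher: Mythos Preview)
Your strategy is the paper's strategy: work on the staircase model and apply Lemma~\ref{calc1}. But two steps in your outline do not go through as written.

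\textbf{The boundary case.} Your proposed fix ``a small $\SL(2,\R)$ rotation places $p$ into cylinder interiors'' destroys exactly the structure you need: after rotating, $(X,\omega)$ is no longer horizontally and vertically periodic, so the explicit staircase cylinder decomposition on which Lemma~\ref{calc1} is to be applied is gone. The paper's remedy is different and stays entirely within the model: if $p$ lies on the boundary of a vertical cylinder but in the interior of a horizontal cylinder $C$ that meets only one vertical cylinder, then $C$ is $\strata$-free, so by Lemma~\ref{L:ec} one may shear $C$ through a full Dehn twist while remaining in $\M$. This shear keeps the surface and its cylinder decomposition unchanged but slides $p$ horizontally into the interior of the adjacent vertical cylinder, which meets two horizontal cylinders; Lemma~\ref{calc1} then applies to that vertical cylinder.

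\textbf{The ``two transverse classes'' hypothesis.} Your claim that ``each horizontal cylinder $C$ admits two distinct vertical $\strata$-equivalence classes $\cD_1,\cD_2$'' is false on the staircase in every genus, not only for $g\le 3$: the top horizontal cylinder always meets a single vertical cylinder (and in $\hyp(2g-2)$ the rightmost vertical cylinder meets a single horizontal one). The paper sidesteps this by noting that on the staircase every cylinder is $\strata$-free, and that whenever a horizontal cylinder meets only one vertical cylinder, that vertical cylinder meets two horizontal ones (and symmetrically); hence Lemma~\ref{calc1} can always be applied to one of the two transverse directions. Moreover, a single application already forces $p$ to the center of one of the constituent rectangles of the staircase, which is visibly a fixed point of the hyperelliptic involution, so the second ``symmetric'' application you propose is not needed.
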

\begin{proof}
Let $\strata$ be either $\hyp(2g-2)$ or $\hyp(g-1,g-1)$. The surface $(X, \omega)$ is pictured again in Figure~\ref{fig:hyp}

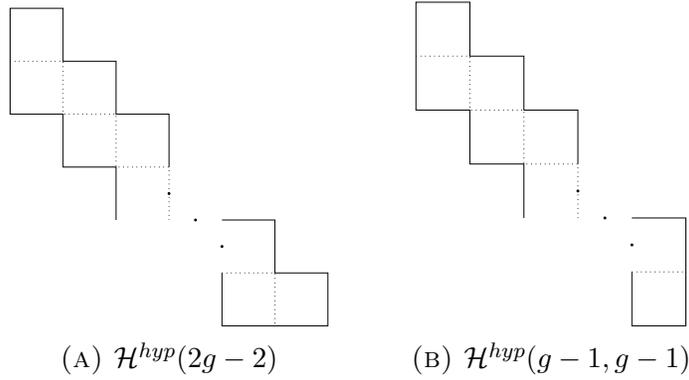
\begin{figure}[H]
    \begin{subfigure}[b]{.35\textwidth}
        \centering
        \resizebox{\linewidth}{!}{
\begin{tikzpicture}
\draw (2, -4) -- (2, -3) -- (1, -3) -- (1, -2) -- (0, -2) -- (0,0) -- (1,0) -- (1, -1) -- (2, -1) -- (2, -2) -- (3, -2) -- (3, -3) ;
\draw[dotted] (0,-1) -- (1,-1) -- (1, -2) -- (2, -2) -- (2, -3) -- (3, -3) -- (3, -4);
\draw[dotted] (4,-5) -- (5, -5) -- (5, -6);
\draw[fill] (3, -3.5) circle [radius = .5 pt];
\draw[fill] (3.5, -4) circle [radius = .5 pt];
\draw[fill] (4, -4.5) circle [radius = .5 pt];
\draw (4, -4) --(5, -4) -- (5, -5) -- (6, -5) -- (6, -6) -- (4, -6) -- (4, -5);
\end{tikzpicture}
        }
        \caption{$\strata^{hyp}(2g-2)$}
        \label{fig:subfig1}
    \end{subfigure} \qquad
        \begin{subfigure}[b]{.3\textwidth}
        \centering
        \resizebox{\linewidth}{!}{
\begin{tikzpicture}
\draw (2, -4) -- (2, -3) -- (1, -3) -- (1, -2) -- (0, -2) -- (0,0) -- (1,0) -- (1, -1) -- (2, -1) -- (2, -2) -- (3, -2) -- (3, -3) ;
\draw[dotted] (0,-1) -- (1,-1) -- (1, -2) -- (2, -2) -- (2, -3) -- (3, -3) -- (3, -4);
\draw[dotted] (4,-5) -- (5, -5) -- (5, -6);
\draw[fill] (3, -3.5) circle [radius = .5 pt];
\draw[fill] (3.5, -4) circle [radius = .5 pt];
\draw[fill] (4, -4.5) circle [radius = .5 pt];
\draw (4, -4) --(5, -4) -- (5, -6) -- (4, -6) -- (4, -5);
\end{tikzpicture}
        }
        \caption{$\strata^{hyp}(g-1,g-1)$}
        \label{fig:subfig2}
    \end{subfigure}
\caption{Hyperelliptic Translation Surfaces} 
\label{fig:hyp}
\end{figure}

Each horizontal and vertical cylinder is $\strata$-free. By Lemma~\ref{calc1} if $p$ is a periodic point in $(X, \omega)$ that lies in the interior of a horizontal cylinder that intersects two vertical cylinders, it is automatically a Weierstrass point. The same holds if it lies in the interior of a vertical cylinder that intersects two horizontal ones. We may therefore assume (up to exchanging each instance of the word ``horizontal" for the word ``vertical" and vice versa) that $p$ lies in the interior of a horizontal cylinder that intersects only one vertical cylinder and on the boundary of a vertical cylinder. By Lemma~\ref{L:ec}, we may shear this cylinder and remain in $\M$. Shearing the horizontal cylinder so as to perform one complete Dehn twist moves the periodic point into the interior of a vertical cylinder that intersects two horizontal cylinders and so we have that $p$ is a Weierstrass point by Lemma~\ref{calc1}. 
\end{proof}

\begin{ass}\label{A-periodic2}
Assume now that $\strata$ is nonhyperelliptic and let $C$ be the central horizontal cylinder in $(X, \omega)$. 
\end{ass}

\begin{prop}\label{P:almost}
A periodic point on $(X, \omega)$ must lie on the boundary of $C$ and in the interior of a vertical cylinder that is not $\strata$-free.
\end{prop}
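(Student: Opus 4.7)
The plan is to tackle the two claims separately: (i) $p$ lies on the boundary of $C$, and (ii) $p$ lies in the interior of a vertical cylinder that is not $\strata$-free. Both arguments come from pitting Lemma~\ref{calc1} against the asymmetry guaranteed by Lemma~\ref{LL5} and Lemma~\ref{LL51}. Throughout, the key structural fact is that the surfaces constructed in Section~\ref{S:KZ-Surfaces} satisfy the hypotheses of the genericity criterion (Proposition~\ref{P:GC}), which together with Theorem~\ref{T:W1} forces every horizontal cylinder to be $\strata$-free, so that the modulus hypothesis of Lemma~\ref{calc1} is automatic whenever $C$ (or any other horizontal cylinder) is used in the role of the outer cylinder.

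For (i), I would first rule out $p \in \mathrm{int}(C)$ by applying Lemma~\ref{calc1} to $C$ with the two vertical equivalence classes $\cD_1, \cD_2$ furnished by Lemma~\ref{LL5}/Lemma~\ref{LL51}. The conclusion of Lemma~\ref{calc1} would force the complement $C \setminus (\cD_1 \cup \cD_2)$ to consist of two rectangles of equal horizontal length, contradicting the asymmetry $a \ne b$ ensured by Lemma~\ref{LL5}/Lemma~\ref{LL51} for our generic choice of horizontal saddle-connection lengths. Next, suppose $p \in \mathrm{int}(C^*)$ for some non-central horizontal cylinder $C^*$. By inspection of the construction, $C^*$ is contained in a single vertical cylinder $D^v$ which also meets $C$; applying Lemma~\ref{calc1} in the vertical direction to $D^v$, with horizontal equivalence classes $\{C^*\}$ and $\{C\}$, pins $p$ to the midpoint of $C^* \cap D^v$. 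I would then reduce to the previous case via a cylinder shear of $D^v$ (tangent to $\M'$ by Theorem~\ref{T:W2} combined with Lemma~\ref{L:ec}) that drags $p$ off of $C^*$ and into $\mathrm{int}(C)$, where the first bullet already applies. Either way $p$ must end up on some horizontal saddle connection, and the same reduction applied to cylinders other than $C$ shows the saddle connection in question is part of $\partial C$.

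For (ii), I would first observe that $p$ must lie in the interior of some vertical cylinder: otherwise $p$ would simultaneously lie on a horizontal saddle connection (by (i)) and on a vertical one, forcing $p$ to be a singularity and contradicting the standing hypothesis. To show the ambient vertical cylinder $D^v$ is not $\strata$-free, I would argue by contradiction: if $\{D^v\}$ were its own $\strata$-equivalence class, the rational height lemma (Lemma~\ref{L:RHL}) would place $p$ at rational height in $D^v$, and one could then reapply Lemma~\ref{calc1} in the vertical direction with $D^v$ in the role of $C$ and two horizontal equivalence classes playing the roles of $\cD_1, \cD_2$; combining the resulting symmetry constraint with the horizontal-midpoint information already extracted in (i) would contradict the asymmetry of Lemma~\ref{LL5}/Lemma~\ref{LL51}. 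The hard part will be this last step, because one must locate two horizontal equivalence classes whose intersections with $D^v$ are connected — straightforward when $D^v$ meets $C$ together with exactly one other horizontal cylinder, but potentially needing a bespoke shearing argument to produce the second class in general.
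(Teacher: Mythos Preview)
Your opening move is correct and matches the paper exactly: Lemma~\ref{calc1} applied to $C$ with the two vertical classes from Lemmas~\ref{LL5}/\ref{LL51} forces $a=b$, contradicting the generic asymmetry, so $p\notin\mathrm{int}(C)$.

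After that, however, your outline has two genuine gaps.

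First, your treatment of $p\in\mathrm{int}(C^*)$ assumes that each non-central horizontal cylinder $C^*$ lies inside a \emph{single} vertical cylinder $D^v$. This is true for the odd/non-hyperelliptic/connected models of Figure~\ref{F:generic}, but it is false for the even-component surfaces of Figure~\ref{fig4}: there the top of the first raised box is split as ``$1\,|\,2$'' while the bottom of the main strip beneath it is labelled ``$4\,|\,3$'', so vertical leaves through that box exit into two \emph{distinct} (indeed $\strata$-equivalent) vertical cylinders, and the hypothesis~(2) of Lemma~\ref{calc1} fails as well. The paper does not try to salvage Lemma~\ref{calc1} here; instead it shears the two small horizontal cylinders $H,H'$ to manufacture a free \emph{diagonal} cylinder and then uses a Dehn twist in that diagonal cylinder to push $p$ into $\mathrm{int}(C)$ (Case~4). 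Your plan contains no analogue of this step.

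Second, your argument for (ii) is where the real difficulty sits, and the proposed mechanism does not work. When the $\strata$-free vertical cylinder $D^v$ is \emph{contained in} $C$---which is exactly the situation in Cases~1 and~2 of the paper---there is only one horizontal equivalence class meeting $D^v$, so you cannot run Lemma~\ref{calc1} on $D^v$ at all; there is no second class $\cD_2$. Moreover, the $a\neq b$ asymmetry of Lemmas~\ref{LL5}/\ref{LL51} is a statement about the central cylinder $C$, not about $D^v$, so even if you could extract a symmetry constraint on $D^v$ it would not contradict those lemmas. The paper's actual arguments here are concrete Dehn-twist computations: in Case~1 a single twist in $D^v$ moves $p$ into $\mathrm{int}(C)$; in Case~2 (where $D^v$ meets the core curve of $C$ twice, which occurs for the ``$1,\alpha,2$'' piece of Figure~\ref{F:generic2}) one first locates a free diagonal cylinder through $p$ and twists that. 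You flag this as ``the hard part'' but give no mechanism; the diagonal-cylinder trick is the missing idea.
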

\begin{proof}
Let $p$ be a periodic point in $(X, \omega)$. By Lemma~\ref{LL5}, Lemma~\ref{LL51}, and Lemma~\ref{calc1} the periodic point cannot lie in the interior of $C$. We will proceed now by cases based on the containment of $p$ in vertical cylinders.

\noindent \textbf{Case 1: $p$ is contained in a vertical cylinder $V$ that is $\strata$-free, is contained in $C$, and only intersects the core curve of $C$ once}

In this case, Lemma~\ref{L:ec} implies that we may shear $V$ so as to perform one complete Dehn twist and remain in $\M$. This moves $p$ to a periodic point in the interior of $C$, which is a contradiction.

\noindent \textbf{Case 2: $p$ is contained in a vertical cylinder $V$ that is $\strata$-free and is contained in $C$}

By the previous case, $V$ must intersect the core curve of $C$ at least twice. By construction of the surfaces in Section~\ref{S:KZ-Surfaces} the situation must be as depicted in Figure~\ref{F:free2}. The marked point is then contained in a $\strata$-free cylinder (drawn in dashed lines). Using Lemma~\ref{L:ec} to shear this cylinder to perform one complete Dehn twist we see that $p$ may be moved to a periodic point in the interior of $C$, which is a contradiction. 

\begin{figure}[H]
\centering
\begin{tikzpicture}[scale=.75]
\draw (5.5, 1) -- (9.5,1);
\draw (5.5,0) -- (9.5,0);
\draw[dotted] (6,1) -- (6,0);
\draw[dotted] (7,1) -- (7,0);
\draw[dotted] (8, 1) -- (8, 0);
\draw[dotted] (9,1) -- (9,0);

\node at (6.5, 1.4) {$1$}; \node at (8.5, 1.4) {$2$};
\node at (6.5, -.4) {$2$}; \node at (8.5, -.4) {$1$};
\node at (5.5, .5) {$C$};

\draw[fill] (7,1) circle [radius = 4pt]; \draw[fill] (7,0) circle [radius = 4pt];
\draw[fill] (9,1) circle [radius = 4pt]; \draw[fill] (9,0) circle [radius = 4pt];
\draw (6,1) circle [radius = 4pt]; \draw (8,1) circle [radius = 4pt];
\draw (6,0) circle [radius = 4pt]; \draw (8,0) circle [radius = 4pt];

\draw (6.5, 1) circle [radius = 2pt]; \node at (6.5, .6) {$p$};

\draw[dashed] (6,1) -- (8,0); \draw[dashed] (7,1) -- (9,0);
\end{tikzpicture}
\caption{The translation surface in Case 2}
\label{F:free2}
\end{figure}
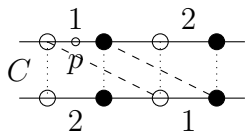

\noindent \textbf{Case 3: $p$ is contained in a $\strata$-free vertical cylinder $V$ that is not contained in $C$}

By construction of the surfaces in Section~\ref{S:KZ-Surfaces}, $V$ only intersects two horizontal cylinders - $C$ and $H$ - and core curves intersect exactly once, see Figure~\ref{F:free3}. Applying Lemma~\ref{calc1}, with $\cD_1 = \{C\}$ and $\cD_2 = \{H\}$ we see that if $p$ lies in the interior of $V$, then there must be a periodic point in $C$, which is a contradiction. If $p$ does not lie in the interior of $V$, then by Lemma~\ref{L:ec} we may shear $H$ to perform one complete Dehn twist while fixing the remainder of the translation surface and remaining in $\M$. This shear moves $p$ to the interior of $V$ and so we are done. 

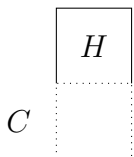
\begin{figure}[H]
\centering
	\begin{tikzpicture}
		\draw[dotted] (0,0) -- (0,1) -- (1,1) -- (1,0); 
		\draw (0,0) -- (1,0);
		\draw (0,1) -- (0,2) -- (1,2) -- (1,1);
		\node at (.5, 1.5) {$H$}; \node at (-.5, .5) {$C$};
	\end{tikzpicture}
\caption{The vertical cylinder $V$}
\label{F:free3}
\end{figure}

\noindent \textbf{Case 4: $p$ is contained in a vertical cylinder that is not contained in $C$ and that is not $\strata$-free}

By construction of the surfaces in Section~\ref{S:KZ-Surfaces}, $\strata$ is an even component of a stratum of Abelian differentials. Moreover, either $p$ is contained on the boundary of a vertical cylinder, as in Figure~\ref{fig:exceptions} or is contained in one of the cylinders passing through the saddle connections labelled $\{1, \hdots, 4\}$ in Figure~\ref{fig:exceptions}.

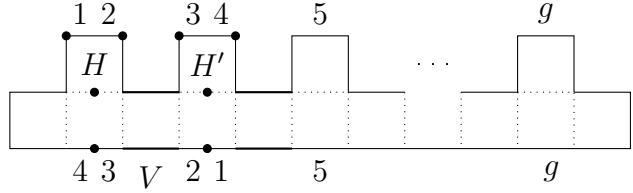
\begin{figure}[H]
\centering

\begin{tikzpicture}[scale = .75]
\draw (1,0) -- (0,0) -- (0,1) -- (1,1);
\draw (1,1) -- (1,2) -- (2,2) -- (2,1) -- (3,1) -- (3,2) -- (4,2) -- (4,1) -- (5,1) -- (5,2) -- (6,2) -- (6,1);
\draw[fill] (7.25,1.5) circle [radius = .25pt];
\draw[fill] (7.5,1.5) circle [radius = .25pt];
\draw[fill] (7.75,1.5) circle [radius = .25pt];
\draw (9,1) -- (9,2) -- (10,2) -- (10,1);
\draw (10,1) -- (11,1) -- (11,0) -- (10,0);
]\node at (1.25, 2.4) {$1$}; \node at (1.25, -.4) {$4$}; \draw[fill] (1,2) circle [radius = 2pt];\draw[fill] (2,2) circle [radius = 2pt];
\node at (1.75, 2.4) {$2$}; \node at (1.75, -.4) {$3$}; \draw[fill] (1.5,0) circle [radius = 2pt];\draw[fill] (1.5,1) circle [radius = 2pt];
]\node at (3.25, 2.4) {$3$}; \node at (3.25, -.4) {$2$}; \draw[fill] (3,2) circle [radius = 2pt];\draw[fill] (4,2) circle [radius = 2pt];
\node at (3.75, 2.4) {$4$}; \node at (3.75, -.4) {$1$}; \draw[fill] (3.5,0) circle [radius = 2pt]; \draw[fill] (3.5,1) circle [radius = 2pt];
]\node at (5.5, 2.4) {$5$}; \node at (5.5, -.4) {$5$};
\node at (9.5, 2.4) {$g$}; \node at (9.6, -.4) {$g$};
\node at (1.5, 1.5) {$H$}; \node at (3.5, 1.5) {$H'$}; \node at (2.5, -.5) {$V$};
\draw[ dotted] (1,1) -- (1,0);
\draw[ dotted] (2,1) -- (2,0);
\draw[ dotted] (3,1) -- (3,0);
\draw[ dotted] (4,1) -- (4,0);
\draw[ dotted] (5,1) -- (5,0);
\draw[dotted] (5,1) -- (6,1);
\draw[ dotted] (6,1) -- (6,0);
\draw[dotted] (8,1) -- (8,0);
\draw[dotted] (1,1) -- (4,1);
\draw (8,1) -- (9,1);
\draw[ dotted] (9,1) -- (9,0);
\draw[ thick] (2,1)--(3,1);
\draw[ thick] (2,0)--(3,0);
\draw[ thick] (4,1)--(5,1);
\draw[ thick] (4,0)--(5,0);
\draw (6,1) -- (7,1);
\draw[ dotted] (7,1) -- (7,0);
\draw[dotted] (9,1) -- (10,1) -- (10,0);
\draw (10,0) -- (1,0);
\end{tikzpicture}
\caption{ Points on the boundary of $C$ and on the boundary of a vertical cylinder }
\label{fig:exceptions}
\end{figure}

Let $H$ and $H'$ be the indicated horizontal cylinder in Figure~\ref{fig:exceptions}, which are $\strata$-free.  Suppose without loss of generality that $p$ is contained in the horizontal cylinder $H$ or its boundary. By Lemma~\ref{L:ec} we may shear the cylinders $H$ and $H'$ to arrive at the surface in Figure~\ref{fig:exceptions0}. Let $D$ be the diagonal cylinder with dashed boundary that passes through the horizontal saddle connection labelled $1$. By Lemma~\ref{L:ec} we may shear $H$ if necessary to perform one complete Dehn twist and move $p$ into the interior of $D$ while remaining in $\M$. By Lemma~\ref{calc1} - where $D$ is intersected by the equivalence classes $\{ H \}$ and $\{C\}$ - it follows that there is a periodic point contained in the interior of $C$. By Lemma~\ref{L:ec}, we may shear $H$ and $H'$ while fixing the remainder of the translation surface to return to $(X, \omega)$ with a periodic point $p$ in the interior of $C$, a contradiction. 


\begin{figure}[H]
\centering

\begin{tikzpicture}[scale = .75]
\draw (1,0) -- (0,0) -- (0,1) -- (1,1);
\draw (1,1) -- (-1,2) -- (0,2) -- (2,1) -- (3,1) -- (3,2) -- (4,2) -- (4,1) -- (5,1) -- (5,2) -- (6,2) -- (6,1);
\draw[dashed] (2,1)--(4, 0);
\draw[dashed] (1,1)--(3,0);
\draw[fill] (7.25,1.5) circle [radius = .25pt];
\draw[fill] (7.5,1.5) circle [radius = .25pt];
\draw[fill] (7.75,1.5) circle [radius = .25pt];
\draw (9,1) -- (9,2) -- (10,2) -- (10,1);
\draw (10,1) -- (11,1) -- (11,0) -- (10,0);
\node at (-.5, 2.5) {$1$}; \node at (3.5, -.5) {$1$}; \node at (1.5, -.5) {$2$}; \node at (3.5, 2.5) {$2$};
\node at (5.5, 2.4) {$5$}; \node at (5.5, -.4) {$5$};
\node at (9.5, 2.4) {$g$}; \node at (9.6, -.4) {$g$};
\draw[ dotted] (1,1) -- (1,0);
\draw[ dotted] (2,1) -- (2,0);
\draw[ dotted] (3,1) -- (3,0);
\draw[ dotted] (4,1) -- (4,0);
\draw[ dotted] (5,1) -- (5,0);
\draw[dotted] (5,1) -- (6,1);
\draw[ dotted] (6,1) -- (6,0);
\draw[dotted] (8,1) -- (8,0);
\draw[dotted] (1,1) -- (4,1);
\draw (8,1) -- (9,1);
\draw[ dotted] (9,1) -- (9,0);
\draw[ thick] (2,1)--(3,1);
\draw[ thick] (2,0)--(3,0);
\draw[ thick] (4,1)--(5,1);
\draw[ thick] (4,0)--(5,0);
\draw (6,1) -- (7,1);
\draw[ dotted] (7,1) -- (7,0);
\draw[dotted] (9,1) -- (10,1) -- (10,0);
\draw (10,0) -- (1,0);
\end{tikzpicture}
\caption{Moving potentially periodic points into the interior of $C$ }
\label{fig:exceptions0}
\end{figure}
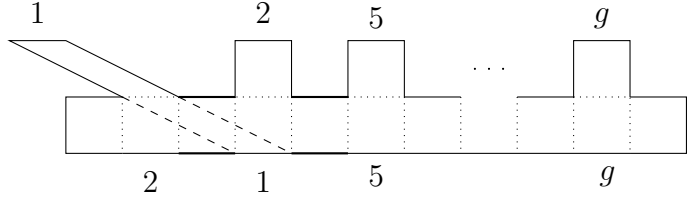

Therefore, $p$ lies on the boundary of $C$ and is contained in the interior of a vertical cylinder which is not $\strata$-free. 
\end{proof}

\begin{cor}\label{C:almost}
The only component of $\strata(2g-2)$ with periodic points is $\hyp(2g-2)$.
\end{cor}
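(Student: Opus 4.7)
By Proposition~\ref{P:hyp} the periodic points on $\hyp(2g-2)$ are exactly the Weierstrass points, so it suffices to rule out periodic points on the non-hyperelliptic components $\strata^{odd}(2g-2)$ (for $g \geq 3$) and $\strata^{even}(2g-2)$ (for $g \geq 4$). Fix such $\strata$ and let $(X, \omega)$ be the explicit generic surface of Section~\ref{S:KZ-Surfaces}, with central horizontal cylinder $C$. By Proposition~\ref{P:almost}, any periodic point $p$ on $(X, \omega)$ lies on the boundary of $C$ and in the interior of a vertical cylinder $V$ that is not $\strata$-free; the plan is to show no such $V$ can exist.

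The central observation is that since $(X, \omega)$ has a single zero $\Sigma$ and $\strata$ is a full component of a stratum, $T_{(X, \omega)} \strata = H^1(X, \Sigma; \C) = H^1(X; \C)$, so by Wright's theorem two cylinders are $\strata$-equivalent exactly when their core curves are real scalar multiples of one another in $H_1(X; \R)$. For $\strata^{even}(2g-2)$ the construction yields exactly $g$ vertical cylinders --- the $a_0$-cylinder inside $C$, the two intertwined height-$4$ cylinders through the twisted piano keys $1$ and $2$, and the $g-3$ straight height-$2$ cylinders through piano keys $3, \ldots, g-1$. Applying Proposition~\ref{P:GC} in the vertical direction, a generic choice of side lengths makes these $g$ moduli rationally unrelated, so the $g$ corresponding vertical core curves are linearly independent in $H_1(X; \R)$; no two are scalar multiples, every vertical cylinder is $\strata$-free, and no $V$ can exist.

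For $\strata^{odd}(2g-2)$ (Figure~\ref{F:generic1}) there are $2g-1$ vertical cylinders: $g-1$ height-$2$ piano-key cylinders and $g$ thin height-$1$ cylinders lying strictly inside $C$. A direct two-chain computation --- gluing the strip of $C$ between any two consecutive thin-cylinder cores to the unique piano-key rectangle sitting above that strip --- shows that any two thin-cylinder cores cobound a $2$-chain, so the $g$ thin cylinders are all homologous and form a single $\strata$-equivalence class, while each piano-key cylinder is in its own homology class and is therefore $\strata$-free. Hence the only candidate for $V$ is a thin cylinder contained in $C$. In that case I would apply the Dehn-twist argument of Case~1 of the proof of Proposition~\ref{P:almost} to the \emph{entire} thin-cylinder equivalence class rather than to a single cylinder: by Lemma~\ref{L:ec} the standard shear of the class is tangent to the orbit closure $\M'$ of $(X, \omega; p)$, and because the moduli in the class are rationally related a simultaneous full Dehn twist returns $(X, \omega)$ to itself while shifting $p$ linearly in its horizontal coordinate to a new periodic point $p'$ with $y$-coordinate in $(0, 1)$, i.e.\ in the interior of $C$. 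This contradicts the opening step of the proof of Proposition~\ref{P:almost}: by Lemmas~\ref{LL5}, \ref{LL51}, and~\ref{calc1} no periodic point can lie in the interior of $C$.

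The principal technical obstacle is the identification of the $\strata$-equivalence classes of vertical cylinders on each explicit surface --- in the even case, verifying that the two intertwined height-$4$ core curves are linearly independent (which reduces to Proposition~\ref{P:GC}); in the odd case, the $2$-chain homology calculation showing that all thin cylinders inside $C$ are mutually homologous and thus lie in a single equivalence class to which the class-wise Dehn-twist argument can be applied.
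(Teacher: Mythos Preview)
Your overall plan — invoke Proposition~\ref{P:almost} and then rule out the existence of a suitable vertical cylinder — matches the paper's, but the execution misreads the explicit surfaces and, as a result, misses the one-line observation that makes the corollary immediate.

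\textbf{The cylinder counts are wrong.} In the surface of Figure~\ref{F:generic1} for $\strata^{odd}(2g-2)$ the piano keys are adjacent; there are exactly \emph{two} height-$1$ vertical cylinders inside $C$ (the left strip $x\in(0,1)$ and the right strip $x\in(r+1,r+2)$), not $g$. Likewise, the even minimal surface (Figure~\ref{fig4} with only $a_0$ uncollapsed) has $g+1$ vertical cylinders, not $g$: you omitted the rightmost height-$1$ strip of $C$. In particular your ``all vertical cylinders are $\strata$-free'' claim in the even case is false — the two height-$1$ cylinders inside $C$ are homologous to each other, and the two height-$4$ cylinders through the twisted keys are homologous to each other, so neither pair is free. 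So the linear-independence argument via Proposition~\ref{P:GC} does not go through.

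\textbf{The Dehn-twist step is unnecessary, because the case never arises.} The point you overlook is that the only vertical cylinders contained in $C$ sit over the left and right flaps of $C$, and over those flaps the top edge of $C$ is glued directly to the bottom edge of $C$. Hence those strips carry no boundary saddle connections of $C$ at all: every point in the interior of $V_L$ or $V_R$ already lies in the \emph{interior} of $C$. Consequently the two conditions supplied by Proposition~\ref{P:almost} — that $p$ lie on $\partial C$ and in the interior of a non-free vertical cylinder — are incompatible on the minimal-stratum surfaces. The boundary of $C$ is entirely covered by the piano-key saddle connections; in the odd case the piano-key vertical cylinders are $\strata$-free (your intersection-number check for these is fine), and in the even case the non-free piano-key cylinders were already eliminated in Case~4 of Proposition~\ref{P:almost}. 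That is the paper's argument, and it avoids any further shearing.
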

\begin{proof}
Let $\strata$ be a minimal nonhyperelliptic stratum of Abelian differentials. If $\M$ is an affine invariant submanifold properly contained in $\strata(0)$ and that pushes forward to $\strata$ under the forgetful map, then its fiber over $(X, \omega)$ is nonempty by Lemma~\ref{L:fiber}. If $(X, \omega; p)$ is an element of the fiber then $p$ is contained in a vertical cylinder that is contained in $C$ and that is not $\strata$-free. By construction of the surfaces in Section~\ref{S:KZ-Surfaces} there are no such cylinders and so we are done. 
\end{proof}

\begin{proof}[Proof of Theorem~\ref{TS1}:]
Let $\strata$ be a component of a stratum of Abelian differentials with at least two zeros. Proceed by induction on $\dim_\C \strata$. The result has already been established for hyperelliptic components (Proposition~\ref{P:hyp}), which establishes the base case, and allows us to assume without loss of generality that $\strata$ is not a hyperelliptic component. Assume that $p$ is a periodic point on $(X, \omega)$. By Proposition~\ref{P:almost}, $p$ is contained on the boundary of $C$ and in a vertical cylinder $V$ that is contained in $C$ and that is not $\strata$-free.

\noindent \textbf{Case 1: $C$ contains an $\strata$-free vertical cylinder $W$}

By construction of the surfaces in Section~\ref{S:KZ-Surfaces}, there are only two types of $\strata$-free vertical cylinders contained in $C$, they are pictured in Figure~\ref{fig:free-cylinder}.

\begin{figure}[H]
\centering
    \begin{subfigure}{0.33\textwidth}
	\begin{tikzpicture}[scale = .75]

\draw (5.5, 1) -- (9.5,1);
\draw (5.5,0) -- (9.5,0);
\draw[dotted] (6,1) -- (6,0);
\draw[dotted] (7,1) -- (7,0);
\draw[dotted] (8, 1) -- (8, 0);
\draw[dotted] (9,1) -- (9,0);

\node at (6.5, 1.4) {$1$}; \node at (8.5, 1.4) {$2$};
\node at (6.5, -.4) {$2$}; \node at (8.5, -.4) {$1$};
\node at (7.5, -.6) {$W$};
\node at (5.5, .5) {$C$};

\draw[fill] (7,1) circle [radius = 4pt]; \draw[fill] (7,0) circle [radius = 4pt];
\draw[fill] (9,1) circle [radius = 4pt]; \draw[fill] (9,0) circle [radius = 4pt];
\draw (6,1) circle [radius = 4pt]; \draw (8,1) circle [radius = 4pt];
\draw (6,0) circle [radius = 4pt]; \draw (8,0) circle [radius = 4pt];
	\end{tikzpicture}
	\caption{A subsurface of $(X, \omega)$ in components with two zeros of odd order}
\end{subfigure}
\qquad 
    \begin{subfigure}{0.33\textwidth}
	\begin{tikzpicture}[scale = .75]
		\draw (4,0) -- (0,0) -- (0,1) -- (1,1) -- (1,2) -- (2,2) -- (2,1) -- (3,1) -- (3,2) -- (4,2) --(4,1);
		\draw[dotted] (1,0) -- (1,1) -- (2,1) -- (2,0);
		\draw[dotted] (4,0) -- (4,1) -- (3,1) -- (3,0);
		\node at (1.5, 2.4) {$1$}; \node at (1.5, -.4) {$2$};
		\node at (3.5, 2.4) {$2$}; \node at (3.5, -.4) {$1$};
		\node at (2.5, -.4) {$W$};
	\end{tikzpicture}
	\caption{A subsurface of $(X, \omega)$ in even components}
\end{subfigure}
\caption{The two types of $\strata$-free cylinders $W$ in $C$}
\label{fig:free-cylinder}
\end{figure}

By Lemma~\ref{L:ec}, if $(X, \omega)$ contains an $\strata$-free cylinder $W$ we may travel in $\M$ in the direction of the standard shear $\sigma_W$ to shrink the horizontal cross curve of $W$ until it vanishes. This degenerates $\M$ to the boundary of $\strata(0)$. Notice that in both cases this degeneration causes two distinct zeros to collide on the boundary, but, by considering Euler characteristic, no genus is lost. By Mirzakhani-Wright~\cite[Corollary 1.2]{MirWri}, the resulting translation surface has an orbit closure of strictly smaller dimension than $\M$. By the genericity criterion (Proposition~\ref{P:GC}), the boundary translation surface remains generic in the component of the stratum to which it belongs, which necessarily has complex dimension one less than $\M$. Therefore, $p$ remains a periodic point. 

By the induction hypothesis, $p$ is a Weierstrass point and the boundary translation surface belongs to a hyperelliptic component. In particular, $p$ must lie halfway across $V$ on the boundary of $C$, dividing $V$ into two subcylinders of equal modulus, call them $V_1$ and $V_2$. By construction, on $(X, \omega;p)$ the only rational linear homogeneous equation that holds on moduli of cylinders equivalent to $V$ is that the moduli of $V_1$ and $V_2$ are equal. By Wright~\cite{Wcyl} - in particular Theorems~\ref{T:W1} and \ref{T:W2} - and Lemma~\ref{L:ec}, $V$ may be sheared on $(X, \omega;p)$ while remaining in $\M$ and fixing the remainder of the translation surface. Shearing so as to perform one complete Dehn twist moves $p$ into the interior of $C$ on $(X, \omega)$, which contradicts Proposition~\ref{P:almost}.

Notice that as a corollary of this step, $\strata$ does not contain two zeros of odd order.

\noindent \textbf{Case 2: The surface belongs to an even component}

Let $H$ and $H'$ be the horizontal cylinders labelled in Figure~\ref{fig:exceptions}. By Lemma~\ref{L:ec}, shearing them while fixing the remainder of the surface remains in $\M$. Therefore, we shear them to find the surface in $\M$ depicted in Figure~\ref{fig:even-again}, which contains a vertical cylinder that contains $H$ and $H'$ and that passes through them exactly once.

\begin{figure}[H]
\begin{tikzpicture}[scale = .75]
\draw (3,0) -- (0,0) -- (0,1) -- (1,1) -- (1,2) -- (1.95,2) -- (1.95,1) -- (2.05, 1) -- (2.05, 2) -- (3, 2) -- (3,1);
\node at (1.5, 2.4) {$1$}; \node at (1.5, -.4) {$2$}; 
\node at (2.5, 2.4) {$2$}; \node at (2.5, -.4) {$1$}; 
\draw[ dotted] (1,1) -- (1,0);
\draw[ dotted] (2,1) -- (2,0);
\draw[ dotted] (3,1) -- (3,0);
\draw[ dotted] (3,1) -- (4,1);
\draw[ dotted] (3,0) -- (4,0);
\draw (4,1) -- (5,1);
\draw (4,0) -- (5,0);
\draw[ dotted] (4,1) -- (4,0);
\draw[ dotted] (5,1) -- (5,0);
\draw[ dotted] (5,1) -- (6,1);
\draw[ dotted] (5,0) -- (6,0);
\draw[fill] (4.5,1) circle [radius = 2pt]; \node at (4.5, 1.35) {$p$}; \node at (4.5, -.35) {$V$};
\node at (1.5, 1.5) {$H'$}; \node at (2.5, 1.5) {$H$};
\end{tikzpicture}
\caption{A translation surface in the even component}
\label{fig:even-again}
\end{figure}
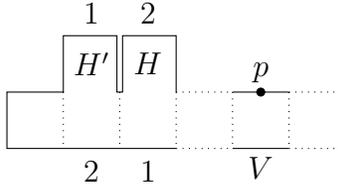

By Lemma~\ref{L:ec}, applying the standard dilation $a_{\{ H \}}$ to $H$ causes its vertical cross curve to vanish and passes to a surface on the boundary of $\M$. When the cross curve vanishes, the boundary translation surface (shown in Figure~\ref{fig:even-again-collapse}) has the zero of order $2k+2$ (where $k$ is a positive integer) on the boundary of $H$ split into two zeros - one of order $1$ and one of order $2k-1$. 

\begin{figure}[H]
\begin{tikzpicture}[scale = .75]
\draw (3,0) -- (0,0) -- (0,1) -- (1,1) -- (1,2) -- (1.95,2) -- (1.95,1) -- (3,1);
\node at (1.5, 2.4) {$1$}; \node at (1.5, -.4) {$2$}; 
\node at (2.5, 1.4) {$2$}; \node at (2.5, -.4) {$1$}; 
\draw[ dotted] (1,1) -- (1,0);
\draw[ dotted] (2,1) -- (2,0);
\draw[ dotted] (3,1) -- (3,0);
\draw[ dotted] (3,1) -- (4,1);
\draw[ dotted] (3,0) -- (4,0);
\draw (4,1) -- (5,1);
\draw (4,0) -- (5,0);
\draw[ dotted] (4,1) -- (4,0);
\draw[ dotted] (5,1) -- (5,0);
\draw[ dotted] (5,1) -- (6,1);
\draw[ dotted] (5,0) -- (6,0);
\draw[fill] (4.5,1) circle [radius = 2pt]; \node at (4.5, 1.35) {$p$}; \node at (4.5, -.35) {$V$};
\end{tikzpicture}
\caption{The boundary translation surface $(Y, \eta)$}
\label{fig:even-again-collapse}
\end{figure}
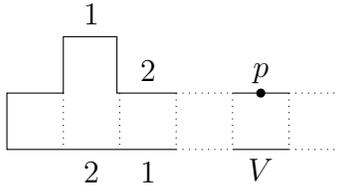

Let $\M'$ be the orbit closure of $(Y, \eta; p)$. By the genericity criterion (Proposition~\ref{P:GC}), $(Y, \eta)$ remains generic. By Mirzakhani-Wright~\cite[Corollary 1.2]{MirWri}, the dimension of $\M'$ must be strictly smaller than the dimension of $\M$. It follows that $p$ is a periodic point on $(Y, \eta)$. However, no such points exist on translation surfaces of genus greater than two with a zero of odd order. Therefore, $(X, \omega)$ cannot belong to an even component.

\noindent \textbf{Case 3: The surface does not belong to an even component}

The marked point is contained in one of the two configurations shown in Figure~\ref{fig:two-config}, where $V$ is adjacent on the right to a vertical cylinder $W$ that contains a horizontal cylinder $H$ (both shown in the figure). 

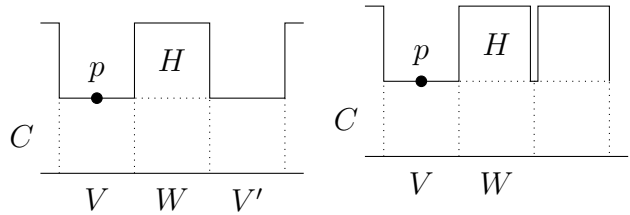
\begin{figure}[H]
    \begin{subfigure}{0.33\textwidth}
        \centering
\begin{tikzpicture}
		\draw (-.25, 0) -- (3.25,0);
		\draw (-.25, 2) -- (0,2) -- (0, 1) -- (1,1) -- (1,2) -- (2,2) -- (2,1) -- (3,1) -- (3,2) -- (3.25, 2);
		\draw[dotted] (0,0) -- (0,1);
		\draw[dotted] (1,0) -- (1,1) -- (2,1) -- (2,0);
		\draw[dotted] (3,1) -- (3,0);
		\node at (.5, -.35) {$V$}; \node at (.5, 1.35) {$p$};
		\draw[fill] (.5,1) circle [radius = 2pt];
		\node at (-.5, .5) {$C$}; \node at (1.5, -.35) {$W$};
		\node at (2.5, -.35) {$V'$}; \node at (1.5, 1.5) {$H$};
\end{tikzpicture}
\caption{The periodic point borders an order $2$ zero}
\end{subfigure}
\begin{subfigure}{0.33\textwidth}
\centering
\begin{tikzpicture}
		\draw (-.25, 0) -- (3.25,0);
		\draw (-.25, 2) -- (0,2) -- (0, 1) -- (1,1) -- (1,2) -- (1.95,2) -- (1.95,1) -- (2.05,1) -- (2.05,2) -- (3, 2) -- (3,1);
		\draw[dotted] (0,0) -- (0,1);
		\draw[dotted] (1,0) -- (1,1) -- (2,1) -- (2,0);
		\draw[dotted] (2,1) -- (3,1) -- (3,0);
		\node at (.5, -.35) {$V$}; \node at (.5, 1.35) {$p$};
		\draw[fill] (.5,1) circle [radius = 2pt];
		\node at (-.5, .5) {$C$}; \node at (1.5, -.35) {$W$}; \node at (1.5, 1.5) {$H$};
\end{tikzpicture}
\caption{The periodic point borders a zero of order $4$ or more}
\end{subfigure}
\caption{Two possible configurations} 
\label{fig:two-config}
\end{figure}

By Lemma~\ref{L:ec}, applying the standard dilation $a_{\{ H \}}$ to $H$ causes its vertical cross curve to vanish and passes to a surface on the boundary of $\M$. The underlying translation surface moves from $\strata(2k, \hdots)$ to $\strata(0, 2k-2, \hdots)$ (see Figure~\ref{fig:collapse-config}). 

\begin{figure}[H]
    \begin{subfigure}{0.33\textwidth}
        \centering
\begin{tikzpicture}
		\draw (-.25, 0) -- (3.25,0);
		\draw (-.25, 2) -- (0,2) -- (0, 1) -- (3,1) -- (3,2) -- (3.25, 2);
		\draw[dotted] (0,0) -- (0,1);
		\draw[dotted] (1,0) -- (1,1) -- (2,1) -- (2,0);
		\draw[dotted] (3,1) -- (3,0);
		\node at (.5, -.35) {$V$}; \node at (.5, 1.35) {$p$};
		\draw[fill] (.5,1) circle [radius = 2pt];
		\node at (-.5, .5) {$C$}; \node at (1.5, -.35) {$W$};
		\node at (2.5, -.35) {$V'$};
		\draw[fill]  (1,1) circle [radius = 2pt]; \draw[fill] (2,1) circle [radius = 2pt];
\end{tikzpicture}
\caption{The surfaces collapses to $\strata(0,0,\hdots)$}
\label{F:end1}
\end{subfigure}
\begin{subfigure}{0.33\textwidth}
\centering
\begin{tikzpicture}
		\draw (-.25, 0) -- (3.25,0);
		\draw (-.25, 2) -- (0,2) -- (0, 1) -- (2.05,1) -- (2.05,2) -- (3, 2) -- (3,1);
		\draw[dotted] (0,0) -- (0,1);
		\draw[dotted] (1,0) -- (1,1) -- (2,1) -- (2,0);
		\draw[dotted] (2,1) -- (3,1) -- (3,0);
		\node at (.5, -.35) {$V$}; \node at (.5, 1.35) {$p$};
		\draw[fill] (.5,1) circle [radius = 2pt];
		\node at (-.5, .5) {$C$}; \node at (1.5, -.35) {$W$};
		\draw[fill] (1,1) circle [radius = 2pt]; 
\end{tikzpicture}
\caption{The surfaces collapses to $\strata(0,2k-2,\hdots)$}
\label{F:end2}
\end{subfigure}
\caption{The result of collapsing $H$} 
\label{fig:collapse-config}
\end{figure}
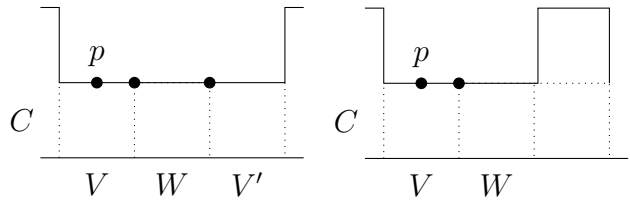

Let $(Y, \eta; \{ p \} \cup Q)$ be the boundary translation surface where $Q$ are the marked points that arise in the degeneration. Let $\M'$ be the orbit closure of $(Y, \eta; \{ p \} \cup Q)$. The genericity criterion (Proposition~\ref{P:GC}) implies that $(Y, \eta; Q)$ is generic in the stratum $\strata'$ that contains it. By Mirzakhani-Wright~\cite[Corollary 1.2]{MirWri}, $\M'$ is an affine invariant submanifold that is properly contained in $\strata'(0)$. The induction hypothesis implies that $(Y, \eta)$ belongs to a hyperelliptic component and that either $p$ is a Weierstrass point or is exchanged with a point in $Q$ under the hyperelliptic involution. 

Consider first the configuration in Figure~\ref{F:end2}. Since $V \cup W$ must be fixed by the hyperelliptic involution and since $W$ may be made arbitrarily long horizontally we see that $p$ is neither a Weierstrass point nor a point exchanged under the hyperelliptic involution with a point in $Q$.

Consider now the configuration in Figure~\ref{F:end1} and let $V_a$ and $V_b$ be the left and right sub-cylinders that $p$ splits $V$ into. We see that $p$ must be exchanged under the hyperelliptic involution with the rightmost point in $Q$ and so $V_a$ and $V'$ have identical moduli. Repeating the argument with the vertical cylinder $W'$ that $V$ borders on the left shows that $(X, \omega)$ must contain the subsurface shown in Figure~\ref{F:end} and satisfy the property that the modulus of $V'$ is the same as the modulus of $V_a$ and the modulus of $V''$ the same as the modulus of $V_b$.

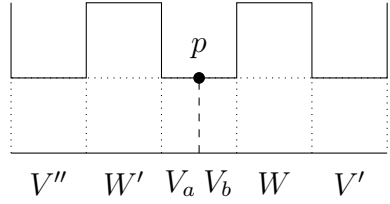
\begin{figure}[H]
        \centering
		\begin{tikzpicture}
			\draw (-1, 2) -- (-1, 1) -- (0,1) -- (0,2) -- (1,2) -- (1,1) -- ( 2,1) -- (2,2) -- (3,2) -- ( 3,1) -- (4,1) -- (4,2);
			\draw (-1,0) -- (4,0);
			\draw[dotted] (-1,0) -- (-1,1) -- (4,1) -- (4,0);
			\draw[dotted] (0,0) -- (0,1); \draw[dotted] (1,0) -- (1,1); 
			\draw[dotted] (2,0) -- (2,1); \draw[dotted] (3,0) -- (3,1); 
			\node at (-.5, -.4) {$V''$};  \node at (.5, -.4) {$W'$}; 
			\node at (2.5, -.4) {$W$};  \node at (3.5, -.4) {$V'$}; 
			\node at (1.5, 1.4) {$p$}; \draw[fill] (1.5,1) circle [radius = 2pt];
			\node at (1.25, -.4) {$V_a$}; \node at (1.75, -.4) {$V_b$};
			\draw[dashed] (1.5, 0) -- (1.5, 1);
		\end{tikzpicture}
		\caption{The surface $(X, \omega; p)$} 
		\label{F:end}
\end{figure} 

Letting $\mathrm{Mod}(D)$ denote the modulus of a cylinder $D$ we see that on this surface
\[ \mathrm{Mod}(V) = \mathrm{Mod}(V_a) + \mathrm{Mod}(V_b) = \mathrm{Mod}(V') + \mathrm{Mod}(V'') \]
which is a rational linear homogeneous relation on moduli satisfied by vertical cylinders on $(X, \omega)$, which contradicts the fact that $(X, \omega)$ is constructed so as to prevent the moduli of vertical cylinders from satisfying such a relation.  \end{proof}

\bibliography{mybib}{}
\bibliographystyle{amsalpha}
\end{document}